\documentclass[11pt,reqno]{amsart}
\usepackage{amsmath}
\usepackage[margin=1.0in]{geometry}
\usepackage{amscd,amsmath,amsxtra,amsthm,amssymb,stmaryrd,xr,mathrsfs,mathtools,enumerate,commath, comment, enumitem, graphicx}
\usepackage{amsfonts, amssymb, amsthm, amsmath, braket, hyperref, mathtools, comment, mathrsfs, enumitem, cleveref}
\usepackage{stmaryrd}
\usepackage{orcidlink}
\usepackage{multirow}
\usepackage{xcolor}
\usepackage{commath}
\usepackage{comment}
\usepackage{tikz-cd}
\usepackage{tkz-graph}
\usepackage{colonequals}
\usepackage{hyperref}
\usepackage{graphicx}
\usepackage{bigstrut}
\usepackage{longtable, multirow, array}
\usepackage{enumitem}

\renewcommand{\iff}{\text{ if and only if }}
\renewcommand{\pmod}[1]{\ (\mathrm{mod}\ #1) }

\newcommand{\fF}{\mathfrak{F}}

\usepackage{longtable} 
\usepackage{pdflscape} 
\usepackage{booktabs}
\usepackage{hyperref}
\definecolor{vegasgold}{rgb}{0.77, 0.7, 0.35}
\definecolor{darkgoldenrod}{rgb}{0.72, 0.53, 0.04}
\definecolor{gold(metallic)}{rgb}{0.83, 0.69, 0.22}
\hypersetup{
	colorlinks=true,
	linkcolor=darkgoldenrod,
	filecolor=brown,      
	urlcolor=gold(metallic),
	citecolor=darkgoldenrod,
}

\usepackage[all,cmtip]{xy}
\DeclareFontFamily{U}{wncy}{}
\DeclareFontShape{U}{wncy}{m}{n}{<->wncyr10}{}
\DeclareSymbolFont{mcy}{U}{wncy}{m}{n}
\DeclareMathSymbol{\Sh}{\mathord}{mcy}{"58}
\usepackage[T2A,T1]{fontenc}
\usepackage[OT2,T1]{fontenc}

\usepackage{tikz}
\usetikzlibrary{shapes.geometric}
\usetikzlibrary{decorations.markings}
\tikzset{every loop/.style={min distance=10mm,looseness=10}}
\tikzstyle{vertex}=[auto=left,circle,minimum size=1pt,inner sep=0pt]

\newtheorem{theorem}{Theorem}[section]
\newtheorem{lemma}[theorem]{Lemma}

\newtheorem{definition}{Definition}[section]
\newtheorem{corollary}[theorem]{Corollary}

\newtheorem{proposition}[theorem]{Proposition}
\newtheorem{question}{Question}[section]

\newtheorem*{theorem*}{Theorem}
\newtheorem*{ass*}{Assumption}
\newtheorem*{remark}{Remark}

\newcommand{\cF}{\mathcal{F}}

\newcommand{\bA}{\mathbb{A}}

\newcommand{\Z}{\mathbb{Z}}

\newcommand{\Q}{\mathbb{Q}}
\newcommand{\F}{\mathbb{F}}
\newcommand{\D}{\mathfrak{D}}

\newcommand{\N}{\mathbb{N}}

\newcommand{\op}[1]{\operatorname{#1}}

\numberwithin{equation}{section}

\begin{document}

	\title[Polynomial Compositions over Valued Fields]{On The Index of Polynomial Compositions over Valued Fields}
	
	\author[A. Jakhar]{Anuj Jakhar\, \orcidlink{0009-0007-5951-2261}}
\address[Jakhar, Kalwaniya]{Department of Mathematics, Indian Institute of Technology Madras, Chennai, Tamil Nadu, India-600036}
\email{anujjakhar@iitm.ac.in;ravikalwaniya3@gmail.com}
	
	\author[R.~Kalwaniya]{Ravi Kalwaniya\, \orcidlink{0009-0008-6964-5276}}

    \author[S. Laishram]{Shanta Laishram\, \orcidlink{0000-0001-6948-294X}}
	\address[Laishram]{Indian Statistical Institute, New Delhi, India}
	\email{shanta@isid.ac.in}
	
\author[P.~Yadav]{Prabhakar Yadav}
\address[Yadav]{FENS, Sabancı Üniversitesi, Istanbul, Turkey}
\email{pkyadav914@gmail.com; prabhakar.yadav@sabanciuniv.edu}
	
	\keywords{Norm, trace, discriminant, monogenity, integrally closed, Krull valuation.}
	\subjclass[2020]{}

\maketitle
\vspace{-0.2in}
\begin{flushright}
    {\it{-- -- Dedicated to Professor Sudesh Kaur Khanduja on the occasion of her 75th birthday}}
\end{flushright}
\begin{abstract}
Determining whether an algebraic number field admits a power integral basis is a classical problem, but it can be difficult for fields defined by polynomial compositions and dynamical iterates. In this paper, we study the monogeneity of compositions $f(h(x))$, where $f(x)$ and $h(x)$ are monic polynomials over an arbitrary Krull valuation ring and $h(x)$ is a trinomial. We derive explicit formulas for the discriminant of the composition and use them to characterize when $f(h(x))$ generates a monogenic field. In particular, we relate the monogeneity of the composition to that of $f(x)$ and to explicit square-free conditions on the  critical values of $h(x)$. We further extend the results to polynomial iteration and obtain a criterion for the monogeneity of binomial iterates, yielding new infinite families of monogenic fields. Finally, we give quantitative results and illustrate our criteria with several examples.
\end{abstract}

\maketitle

\section{Introduction} \label{sec:intro}
Let $K$ be a number field of degree $n$ over $\mathbb{Q}$ and $\mathbb{Z}_K$ denote its ring of integers. Since $\mathbb{Z}_K$ is a free $\mathbb{Z}$-module of rank $n$, it admits an integral basis $\{w_1,\ldots,w_n\}$. A natural question is whether this integral basis can be chosen in the particularly simple form $\{1,\alpha,\ldots,\alpha^{n-1}\}$ for some $\alpha\in\mathbb{Z}_K$. A number field $K$ is said to be \emph{monogenic} if such an element exists, or equivalently, if $\mathbb{Z}_K=\mathbb{Z}[\alpha]$ for some $\alpha\in\mathbb{Z}_K$. In this case, $\{1, \alpha, \ldots, \alpha^{n-1} \}$ is called a power integral basis of $K$. The existence of a power integral basis provides a particularly convenient description of the ring of integers and simplifies several arithmetic computations in number fields, including the determination of discriminants, integral bases and ideals. Despite this natural formulation, not every number field is monogenic, and deciding whether a given number field admits a power integral basis is a longstanding and difficult problem in algebraic number theory, going back to a question of Hasse~\cite{Hasse}.

Dedekind showed that the discriminant\footnote{The discriminant of a monic polynomial of degree $n$ having roots $\theta_1,\theta_2,\ldots,\theta_n$ is defined as $\prod_{1\leq i<j\leq n}(\theta_i-\theta_j)^2$.} $\D_f$ of a monic irreducible polynomial $f(x)$ and the discriminant $d_K$ of number field $K$ generated by a root $\theta$ over $\Q$ are related by
\begin{equation} \label{formula}
\D_f=[\Z_K:\Z[\theta]]^2d_K.
\end{equation}
Thus, if $\D_f$ and the index $[\Z_K:\Z[\theta]]$ are known, then the field discriminant $d_K$ can be determined from \eqref{formula}. In 1962, Swan \cite{Swan} was the first to give a discriminant formula for arbitrary trinomials of the form $x^n+ax^m+b$ in $\Z[x]$, using properties of the resultant. In 2019, Otake and Shaska~\cite{MR3916735} computed discriminant of polynomial $x^n +t(x^2 +ax+b)$ using  Bezoutians of polynomials. In 2021, Krystian~\cite{MR4563737} provided a discriminant formula for quadrinomials of the form
$x^n +ax^k +bx+c$, where  $k\in \{2,3, n-1\}$. Since then, discriminant formulas have been obtained for many other families of polynomials. For a comprehensive account of problems concerning discriminants and related problems, we refer the reader to the survey article of Ga\'{a}l \cite{Gaal2024}. Despite being an oldest problem, finding an effective formula for the discriminant of an arbitrary quadrinomial, particularly one that can be computed efficiently for any given quadrinomial, even if the polynomials has larger degree, still remains a challenging problem. To better understand this broader class of polynomials, particularly in higher degrees, a natural direction is to consider the following question.

\begin{question}
Let $f(x)$ and $g(x)$ be arbitrary monic polynomials. What is the discriminant of the composition $f(g(x))$ in terms of their coefficients and exponents?
\end{question}

In this direction, we obtain an explicit formula for the discriminant of the composition of arbitrary polynomial with trinomials. This result is of independent interest, as it expresses the discriminant of the composition in terms of the discriminant of the original polynomial along with the coefficients and exponents of the trinomials. In particular, once the discriminant of $f(x)$ is known, the discriminant of $f(h(x))$ can be computed explicitly.
\begin{theorem}\label{Discriminant}
Let $F$ be a field and $f(x) \in F[x]$ be a monic polynomial of degree $n$ and  $h(x) = x^m + a x^k + b \in F[x]$, where $\gcd(m, k) = t$, $m = m_1 t$, $k = k_1 t$ and $m - k = t$. Suppose that $f\circ h$ is irreducible over $F$. Then the discriminant of the composition  $f\circ h$ is given by
\[
\D_{f \circ h} = (-1)^{\,\frac{nm(nm+2k+n-4)}{2}} \, \D_f^{m} \,f(b)^{k - 1} \, m^{nm} \, f\left(G\left(\frac{-a k_1}{m_1}\right)\right)^t,
\]
where $G(x)=x^{m_1}+ax^{k_1}+b$.
\end{theorem}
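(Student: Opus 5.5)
The plan is to compute $\D_{f\circ h}$ as a resultant and use the chain rule to split $(f\circ h)'$. For a monic polynomial $P$ of degree $N$ we have $\D_P=(-1)^{N(N-1)/2}\operatorname{Res}(P,P')$, and $\operatorname{Res}(P,Q)=\prod_{P(\theta)=0}Q(\theta)$ when $P$ is monic. Put $P=f\circ h$, which is monic of degree $N=nm$, and work in a splitting field over $F$. The chain rule gives $P'(x)=f'(h(x))\,h'(x)$, and multiplicativity of the resultant gives
\[
\operatorname{Res}(f\circ h,(f\circ h)')=\operatorname{Res}(f\circ h,f'\circ h)\cdot\operatorname{Res}(f\circ h,h').
\]
I will evaluate the two factors separately. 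The irreducibility hypothesis is used only to guarantee separability of the roots, and the final identity is polynomial in the coefficients anyway.

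\emph{First factor.} Let $\alpha_1,\dots,\alpha_n$ be the roots of $f$. The roots of $f\circ h$ are exactly the roots of the equations $h(x)=\alpha_i$, each contributing $m$ roots counted with multiplicity. Hence
\[
\operatorname{Res}(f\circ h,f'\circ h)=\prod_{i}f'(\alpha_i)^m=\operatorname{Res}(f,f')^m=\bigl((-1)^{n(n-1)/2}\D_f\bigr)^m .
\]

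\emph{Second factor.} Swap the arguments using $\operatorname{Res}(P,Q)=(-1)^{\deg P\deg Q}\operatorname{Res}(Q,P)$. Then expand over the roots of $h'$, which has leading coefficient $m$ and degree $m-1$:
\[
\operatorname{Res}(h',f\circ h)=m^{nm}\prod_{h'(c)=0}f(h(c)).
\]
Since $m-k=t$, we get $h'(x)=x^{k-1}(mx^t+ak)$.
\begin{itemize}
\item The root $c=0$ has multiplicity $k-1$ and $h(0)=b$, which contributes $f(b)^{k-1}$.
\item The remaining $t$ roots satisfy $c^t=-ak/m=-ak_1/m_1$. For each of them, $h(c)=(c^t)^{m_1}+a(c^t)^{k_1}+b=G(-ak_1/m_1)$, which contributes $f\bigl(G(-ak_1/m_1)\bigr)^t$.
\end{itemize}

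Multiplying the pieces reproduces every factor in the statement of Theorem~\ref{Discriminant}: $\D_f^m$, $m^{nm}$, $f(b)^{k-1}$ and $f(G(-ak_1/m_1))^t$. The main remaining obstacle is the sign. It collects three contributions: $(-1)^{nm(nm-1)/2}$ from the definition of the discriminant, $(-1)^{mn(n-1)/2}$ from the first factor, and $(-1)^{nm(m-1)}$ from swapping the resultant. I would add these exponents and reduce modulo $2$ using $m=k+t$ and the parities of the products involved, checking that the total agrees with $\tfrac{nm(nm+2k+n-4)}{2}$. Since $nm(m-1)$ is even, the swap contributes no sign. The remaining sum is $\tfrac{nm(nm-1)+mn(n-1)}{2}=\tfrac{nm(nm+n-2)}{2}$. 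This differs from the stated exponent by $nm(k-1)$, which is even, so the two agree.
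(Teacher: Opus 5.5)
\textbf{The gap is in your last sentence.} The claim that $nm(k-1)$ is even is false. Since $t=m-k$ divides both $m$ and $k$, they cannot both be odd, so $mk$ is even and therefore $nm(k-1)\equiv nm \pmod 2$. This is odd whenever $n$ and $m$ are both odd, e.g.\ $n=1$, $m=3$, $k=2$.

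So your computation does not reproduce the displayed sign, and in fact it cannot, because the displayed sign is wrong in that case. Take $f(x)=x$ and $h(x)=x^3+x^2+1$, which is irreducible over $\Q$ with $t=1$. Its true discriminant is $-31$. Your exponent $\frac{nm(nm+n-2)}{2}=3$ gives $-31$, while the stated exponent $\frac{nm(nm+2k+n-4)}{2}=6$ gives $+31$.

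What your argument actually proves is
\[
\D_{f\circ h}=(-1)^{\frac{nm(nm+n-2)}{2}}\,\D_f^{m}\,f(b)^{k-1}\,m^{nm}\,f\!\left(G\!\left(\tfrac{-ak_1}{m_1}\right)\right)^{t},
\]
which is the displayed formula multiplied by $(-1)^{nm}$. You should state this corrected sign rather than assert agreement.

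The paper's own proof gives your sign too. Multiplying \eqref{disc f.h}, \eqref{norm of theta} and \eqref{norm of z} yields the exponent $\frac{nm(nm+n-2)}{2}+nm(k-1)+nm$, and this is $\equiv\frac{nm(nm+n-2)}{2}$ because $nmk$ is even. The theorem as displayed simply drops the factor $(-1)^{mn}$ coming from $\mathcal{N}_{K/F}(m\theta^t+ak)$. Later sections use the formula only up to $\pm$, so nothing downstream is affected.

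Apart from that step, your argument is correct and takes a different route from the paper. The paper works with norms: $\D_{f\circ h}=(-1)^{\binom{nm}{2}}\mathcal{N}_{K/F}\bigl((f\circ h)'(\theta)\bigr)$. It gets $\D_f^m$ from the norm-in-a-subfield lemma and reads $\mathcal{N}_{K/F}(\theta)$ off the constant term. For $\mathcal{N}_{K/F}(m\theta^t+ak)$ it constructs the minimal polynomial of $z=m\theta^t+ak$, which requires proving $[F(\theta^t):F]=m_1n$ from the irreducibility of $f\circ h$.

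Your resultant version replaces all of this with the single identity $\operatorname{Res}(h',f\circ h)=m^{nm}\prod_{h'(c)=0}f(h(c))$, together with the observation that every nonzero critical point satisfies $c^t=-ak_1/m_1$. This is shorter and makes the exponent $t$ transparent. It also needs no irreducibility or separability, since both sides are polynomial identities when roots are counted with multiplicity. Both arguments tacitly assume $m\neq 0$ in $F$.
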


Given the discriminant formula, it is clear from \eqref{formula} that the index $[\mathbb{Z}_K:\mathbb{Z}[\theta]]$ plays an important role in determining the field discriminant. This naturally leads to the question of when this index is equal to $1$. At this point, it is important to distinguish between the monogeneity of a number field and monogeneity of the polynomial defining it. A number field $K$ is called \textit{monogenic} if its ring of integers $\Z_K$ admits a power integral basis, i.e., $\Z_K=\mathbb{Z}[\alpha]$ for some algebraic integer $\alpha\in K$. For example, every quadratic number field is monogenic. On the other hand, if $f(x)\in\mathbb{Z}[x]$ is a monic irreducible polynomial with a root $\beta$, one can ask whether the order $\mathbb{Z}[\beta]$ is the full ring of integers of $K=\mathbb{Q}(\beta)$, or equivalently, whether $[\mathbb{Z}_K:\mathbb{Z}[\beta]]=1$. In this case, we say that the polynomial $f(x)$ is monogenic.

For example, consider $K=\mathbb{Q}(\sqrt{5})$. Since $K$ is quadratic, it is monogenic, with $\Z_K=\mathbb{Z}[\frac{1+\sqrt{5}}{2}]$. The minimal polynomial $g(x)=x^2-x-1$ of the generator therefore gives a power integral basis and is monogenic. Whereas, $f(x)=x^2-5$ defines the same number field, but its root $\sqrt{5}$ generates the proper suborder $\mathbb{Z}[\sqrt{5}]$, and $[\Z_K:\mathbb{Z}[\sqrt{5}]]=2$. Thus, although $K$ is monogenic, the polynomial $x^2-5$ defining it is not monogenic.

In 1989, Ga\'al and Schulte~\cite{MR979943} computed all power integral bases of cubic fields with bounded discriminant by applying Baker's effective method and the reduction procedure of Baker and Davenport. Further, Ga\'al~\cite{MR1229673, MR1333313} provided power integral basis for an infinite family of quartic and totally real cyclic sextic fields, respectively, and with Pohst~\cite{MR1423074}, they provided power integral basis for  totally real cyclic quintics field generated by Lehmer's parametric polynomial. In 1999, Ga\'al and Gy\H ory~\cite{MR1703860} provided an algorithm for solving index form equations in totally real quintic fields with Galois groups $M_{20}$ (metacyclic group of degree 5), $A_5$ or $S_5$. On the similar lines, Bilu, Ga{\'a}l and Gy{\H o}ry~\cite{MR2102808} studied index form equations in sextic fields to provide explicit resolution in the context of power integral bases.
 
For a comprehensive overview of problems related to monogenity, as well as further developments in the effective and algorithmic theory of monogenic number fields, we refer the reader to the survey articles by Ga\'al~\cite{Gaal2024} and Evertse and Gy\H{o}ry~\cite{EvertseGyory2025}.

Moving beyond quadratic extensions, the assumption that every algebraic number field is monogenic was disproved by Dedekind~\cite{RD1878} in 1878. He gave the first explicit example of a non-monogenic number field, generated by a root of the cubic $x^3-x^2-2x-8$, by showing that it has a common index divisor\footnote{Let $K=\mathbb{Q}(\theta)$ be a number field with ring of integers $\Z_K$. The \emph{index} of the element $\theta$ (or of its minimal polynomial $f$) is the group index $\operatorname{ind}(\theta)=[\Z_K:\mathbb{Z}[\theta]]$, so $f$ is monogenic precisely when $\operatorname{ind}(\theta)=1$. The \emph{index of the field}, denoted by $i(K)$, is defined as
\(i(K) = \{ \operatorname{ind}(\alpha) : \alpha \in \Z_K \} \). Any divisor of $i(K)$ is called as the \textit{common index divisor} of $K$}. This discovery, that a prime can divide the index of every integral element, led to the development of more refined methods for determining integral bases. In this direction, Ore~\cite{MR1512440} introduced the theory of first-order $\phi$-adic Newton polygons, which provides a method for studying the contribution of a prime $p$ to the index. However, the first-order approach does not always give enough information to determine the $p$-adic valuation of the index, particularly when the corresponding residual polynomial has repeated irreducible factors. To overcome this difficulty, Montes~\cite{Montes1999}, in his PhD thesis, developed the theory of higher-order Newton polygons by introducing higher-order valuations and residual polynomials that allow the arithmetic information to be lifted successively. This theory was later developed into a systematic and effective computational framework by Guàrdia, Montes, and Nart~\cite{MR2861080,MR2833586}, providing an algorithmic procedure for determining the $p$-adic valuation of the index and, consequently, for constructing integral bases.


The Montes algorithm and higher-order Newton polygon techniques provide powerful tools for studying monogeneity and computing integral bases. However, their computational complexity can become significant as the degree of the defining polynomial increases, particularly for number
fields arising from iterated or composed polynomials. This has motivated the development of more efficient approaches for such families. For instance, Jakhar, Laishram, Srinivas, and Yadav~\cite{jakhar2025behaviour}
recently employed Newton polygon techniques to study polynomial iterates, obtaining more efficient criteria for the behaviour of indices and ramification in the resulting towers of number fields. Despite these developments, a direct application of existing computational methods to
highly composed polynomials may still involve substantial calculations. This motivates the search for structural criteria that can determine monogeneity and index behaviour more directly from the constituent polynomials.

For a comprehensive treatment of the effective theory of monogenic number fields, we refer the reader to the monograph of Evertse and Gy{\H o}ry \cite{MR3586280}. This work develops general effective methods and algorithms
for studying monogenic number fields, including effective finiteness results for power integral bases and their various applications and generalizations. For a more algorithmic and computational perspective, we refer to the monograph of Ga{\'a}l \cite{MR3970246}, which focuses on practical methods for determining power integral bases, together with explicit computations and numerical results for number fields of relatively small degree.
This also leads to a natural question concerning monogeneity:
\begin{question}
Let $f(x)$ and $g(x)$ be monic polynomials in $\mathbb{Z}[x]$. Under what conditions are both $f(x)$ and $f(g(x))$ monogenic?
\end{question}
The above question is interesting because it highlights how discriminants and indices of the underlying polynomials are related, and it also provides a way to construct new infinite families of monogenic number fields from known ones. However, the problem is rather subtle. When polynomials are composed, their arithmetic invariants do not behave in a straightforward manner, which makes the analysis technically challenging. Nevertheless, significant progress has been made in this direction. Harrington and Jones~\cite{JonesHarr} studied pairs of binomials 
$f(x) = x^n - a$ and $g(x) = x^m - b$ for which both $f(x)$ and composition $f(g(x))$ are monogenic. In particular, they obtained necessary and sufficient conditions for the monogeneity of $(x^m - b)^n - a$ in the cases $n = 2, 3$, and proposed a conjecture for higher degrees, that is, for $n \geq 4$. Later, Jakhar, Kalwaniya, and Yadav~\cite{J-K-Y} extended these results by establishing the necessary and sufficient conditions for $\mathbb{Z}_K = \mathbb{Z}[\theta]$, where $\theta$ is the root of an irreducible polynomial $(x^m - b)^n - a \in \mathbb{Z}[x]$ with $n \geq 2$ and $m \geq 1$. More recently, using Dedekind's index criterion, they~\cite{JakharkalwaniyaYadav} derived necessary and sufficient conditions for the monogeneity of binomial and trinomial composition. In 2025, Kaur, Kumar, and Remete \cite{KKR} established necessary and sufficient conditions for a monic irreducible polynomial $f(x)\in\mathbb{Z}[x]$ such that the power composition $f(x^k)$, with $k\in\mathbb{Z}_{>0}$, is monogenic. 

In this direction, we consider compositions of an arbitrary monic polynomial with the trinomial $h(x)=x^m+ax^{m-1}+b$. Assuming that $f(h(x^t))$ is irreducible over $\Q$, our next theorem gives a complete characterization of the monogeneity of $f(h(x^t))$.
\begin{theorem}\label{thm:2.1}
    Let $f(x) \in \Z[x]$ be a monic polynomial of degree $n$ and $h(x) = x^m + a x^{m-1} + b \in \Z[x]$ be a monic polynomial of degree $m \geq 2$ such that $f(h(x^t))$ is irreducible over $\Q$. 
    Then $f(h(x^t))$ is monogenic if and only if: 
    \begin{itemize}
        \item [\textup{(1)}] $f(x)$ is monogenic;
        \item  [\textup{(2)}] $f(b)$ is squarefree whenever $m>2$. Further, if $m=2$ then $p^2 \nmid f(b)$ whenever $p$ divides both $ a f'(b)$ and $\D_{f(h(x^t))}$;
        \item [\textup{(3)}] $p^2\nmid m^{mn}\,f\left(h\left(\frac{-a(m-1)}{m}\right)\right)$ for every $p\nmid \gcd(a,m)$;
        \item [\textup{(4)}] $p$ does not divide index of $f(x^p+b)$ for every $p\mid \gcd (a, m)$ and $p^2\mid a$;
        \item[\textup{(5)}] For $p\mid \gcd(a,m)$ with $p\mid \mid a$, write $m=p^u \ell$ with $p\nmid \ell$. Then $f(x^{\ell}+b)$ and $\frac{f(x^m+b)-f(x^{\ell}+b)^{p^u}}{p}+\frac{a}{p}x^{m-1}f'(x^m+b)$ are coprime modulo $p$;
        \item [\textup{(6)}] $p$ does not divide index of $f(h(x^p))$ for every $p\mid t$.
    \end{itemize}
\end{theorem}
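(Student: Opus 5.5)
Write $F(x)=f(h(x^t))$, $\theta$ a root, $K=\Q(\theta)$. Since $[\Z_K:\Z[\theta]]$ is a product of local contributions, monogeneity of $F$ is equivalent to $p\nmid[\Z_K:\Z[\theta]]$ for every prime $p$. By \eqref{formula} it suffices to look at primes $p$ with $p^2\mid\D_F$; every other prime is harmless. Note that $H(x)=h(x^t)=x^{mt}+ax^{(m-1)t}+b$ is a trinomial of exactly the shape in Theorem~\ref{Discriminant}, with exponents $M=mt$, $k=(m-1)t$, $\gcd=t$, $M-k=t$, $m_1=m$, $k_1=m-1$, $G=h$. Theorem~\ref{Discriminant} therefore gives, up to sign,
\[
\D_F=\D_f^{mt}\,f(b)^{(m-1)t-1}\,(mt)^{nmt}\,f\!\left(h\!\left(\tfrac{-a(m-1)}{m}\right)\right)^{t}.
\]
A prime dividing $\D_F$ thus divides $\D_f$, $f(b)$, $mt$, or the critical-value factor $m^{mn}f(h(-a(m-1)/m))$. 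In each case I would apply Dedekind's criterion to $F$ modulo $p$, writing $\bar F=\prod \bar g_i^{e_i}$, lifting to $g_i$, and setting $F=\prod g_i^{e_i}+pT$. I would then test coprimality of $\bar T$ with the repeated factors. Each of conditions (1)--(6) should be the translation of this test for one type of prime, and I would prove the two directions prime by prime.

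\textbf{Primes not dividing $mt$ and not dividing $\gcd(a,m)$.} If $p\mid\D_f$ and $p$ divides none of the other factors, then the map $y\mapsto h(x^t)$ is étale modulo $p$ above the repeated factors of $\bar f$. Factors $\bar\phi^e$ of $\bar f$ then pull back to $\bar\phi(\bar H)^e$ with $\bar\phi(\bar H)$ separable. Dedekind's remainder for $F$ is $T_f(H)$, and coprimality transfers in both directions, so $p$ divides the index of $F$ iff it divides the index of $f$; this gives (1). If $p\mid f(b)$, then $H\equiv x^{(m-1)t}(x^t+a)+b$, and the factor $x$ appears in $\bar F$ with multiplicity $(m-1)t$. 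Dedekind's test reduces to whether $p^2\mid F(0)=f(b)$. When $m=2$ and $t=1$ the multiplicity of $x$ is $1$, and the repeated factor only arises if $p\mid af'(b)$; this gives the refined clause in (2). For $p$ dividing the critical-value factor with $p\nmid\gcd(a,m)$, the repeated factor sits at $x^t\equiv -a(m-1)/m$, and I expect a Newton-polygon or direct Dedekind computation to show the index is prime to $p$ iff $p^2\nmid m^{mn}f(h(-a(m-1)/m))$, giving (3). Here I would use that the exponent of $p$ in that factor is essentially the order of vanishing of $\bar F$ there, i.e. $v_p(F(\alpha))$ at a lifted double root.

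\textbf{Primes dividing $\gcd(a,m)$ and primes dividing $t$.} If $p\mid\gcd(a,m)$ and $p^2\mid a$, then $H\equiv x^{mt}+b$ modulo $p^2$ in the sense relevant to Dedekind's remainder. I would then reduce to $f(x^p+b)$ by the Frobenius-type congruence $x^{p^u\ell}+b\equiv(x^{\ell})^{p^u}+b$, relating the index of $F$ to that of $f(x^p+b)$; this gives (4). If $p\,\|\,a$, I write $m=p^u\ell$. Then $\bar F=\overline{f(x^\ell+b)}^{\,p^u}$ when $t=1$, and the Dedekind remainder is exactly
\[
\frac{f(x^m+b)-f(x^\ell+b)^{p^u}}{p}+\frac{a}{p}\,x^{m-1}f'(x^m+b),
\]
obtained by Taylor-expanding $f(x^m+b+ax^{m-1})$ to first order in $a$. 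Dedekind's criterion is then literally (5). Finally, for $p\mid t$ I would write $F(x)=f(h(x^p))\circ x^{t/p}$ and argue that $p$ divides the index of $F$ iff it divides the index of $f(h(x^p))$. This uses the power-composition criterion of Kaur--Kumar--Remete \cite{KKR}, with $t/p$ handled by repeating the argument, or by directly comparing Dedekind remainders; this gives (6).

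\textbf{Main obstacle.} I expect the main difficulty to be the interaction cases, where $p$ divides several factors at once: for example $p\mid t$ together with $p\mid\gcd(a,m)$, or $p\mid\D_f$ together with $p\mid f(b)$. There I must check that the conditions are exactly equivalent and not merely sufficient. My approach would be to handle each irreducible factor of $\bar F$ separately, since Dedekind's test is local on factors, and assign each repeated factor to exactly one of the cases above according to which root of $\bar H$ it lies over. The subcase (3) also needs care in showing $p^2$-divisibility of the critical value matches the Dedekind remainder; the discriminant formula of Theorem~\ref{Discriminant} is what makes that identification possible.
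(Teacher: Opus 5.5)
\textbf{Condition (2) when $m=2$, $t\ge 2$.} Your analysis of the factor $x$ is correct, and it is exactly where your plan cannot produce the statement as written. Once $p\mid f(b)$, the factor $x$ divides $\bar F$ with multiplicity at least $(m-1)t$. Moreover $F'(0)=f'(b)H'(0)$ with $H'(0)=0$ unless $(m,t)=(2,1)$, so the test at $x$ is just $p^2\mid f(b)$. Hence the refined clause can only arise for $(m,t)=(2,1)$. For $m=2$, $t\ge 2$, Dedekind's criterion demands $p^2\nmid f(b)$ for every $p$, which (2) as stated does not supply. You say this ``gives the refined clause in (2)'', but (2) applies that clause for all $t$.

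In fact the statement is false in that case. Take $f(x)=x$, $h(x)=x^2+x+27$, $t=2$, so $F(x)=x^4+x^2+27$. This is irreducible: it has no real roots, and a factorization $(x^2+cx+d)(x^2-cx+e)$ would force either $d=e$, $d^2=27$, or $c=0$, $d+e=1$, $de=27$. Now check the conditions:
\begin{itemize}
\item (1) is trivial.
\item (2), (4), (5) are vacuous, since $af'(b)=1$ and $\gcd(a,m)=1$.
\item (3) holds, since $4h(-1/2)=107$ is prime.
\item (6) holds, since $F\equiv(x^2+x+1)^2\pmod{2}$ and $\bigl(F-(x^2+x+1)^2\bigr)/2\equiv(x+1)^3$, which is coprime to $x^2+x+1$.
\end{itemize}
Yet $F\equiv x^2(x^2+1)\pmod{3}$ and $F\in\langle 3,x\rangle^2$. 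Concretely, $\beta=(\theta^3+\theta)/3$ satisfies $\beta^2=-3(\theta^2+1)$, so $3\mid[\Z_K:\Z[\theta]]$.

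The paper's proof has the same hole. In the converse of Theorem~\ref{VT2.1}, for $v(t)=0$, it asserts $\cF(0)=f(b)\notin M_v^2$ ``by condition (2)''. But (2) yields this only when $m\ge 3$ or $af'(b)\in M_v$. What your route actually proves is the theorem with (2) strengthened to ``$f(b)$ squarefree whenever $m>2$ or $t>1$'', keeping the refined clause only for $(m,t)=(2,1)$. You should say so explicitly.

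\textbf{The obstacle you leave open.} You handle factors over repeated roots of $\bar f$ by \'etaleness, which fails in the interaction cases. No \'etaleness is needed: the paper's workhorse (Theorem~\ref{Thm:4.1} with Lemma~\ref{lem:5.14}) is a transfer valid for any monic inner polynomial. If $\bar\phi$ is irreducible with $\bar\phi^2\mid\bar f$, write $f=\phi^2Q+pA$. Then for any monic $H$ and any irreducible $\bar\psi\mid\bar\phi(\bar H)$,
\[
F\in\langle p,\psi\rangle^2 \iff \bar\psi\mid\bar A(\bar H) \iff \bar\phi\mid\bar A \iff f\in\langle p,\phi\rangle^2 .
\]

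With this, take a repeated factor $\bar\psi$ with root $\xi$, so that $\bar f'(\bar H(\xi))\bar H'(\xi)=0$. If $\bar H(\xi)$ is a repeated root of $\bar f$, transfer settles it, whether or not $\xi$ is critical. Otherwise either $\xi=0$, where the test is $p^2\mid f(b)$, or $\xi^t=\bar\eta$ with $p\nmid mt$ and $\eta=-a(m-1)/m$. In the latter case $\eta$ is an exact critical point, so $f(h(y))=f(h(\eta))+(y-\eta)^2Q_1(y)$ in $\Z_{(p)}[y]$. It follows that $F\in\langle p,\psi\rangle^2$ iff $p^2\mid f(h(\eta))$. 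This identity, not Theorem~\ref{Discriminant}, is what ties (3) to Dedekind's test.

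The same transfer covers the remaining conditions:
\begin{itemize}
\item For (4), it passes from $f(x^m+b)$ to $f(x^p+b)$ via $x\mapsto x^{m/p}$. The Frobenius congruence alone only shows that all factors are repeated.
\item It extends (4) and (5) from $t=1$ to $p\nmid t$ via $x\mapsto x^t$.
\item It gives (6) directly from your decomposition $F=f(h(x^p))\circ x^{t/p}$, since every factor of $\overline{f(h(x^p))}$ is repeated. No appeal to \cite{KKR} or iteration is needed.
\end{itemize}
Your case list also skips $p\mid m$, $p\nmid at$; this case is routine, since $\bar H'$ then vanishes only at $0$.

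The paper organizes the same ingredients in two stages. It first treats $f\circ h$ via Propositions~\ref{Prop:4.13} and~\ref{Pro:4.14} (the latter is your Taylor expansion for (5)). It then handles $x^t$ as a second composition with a binomial, and obtains the forward direction from Proposition~\ref{pro:5.13}.
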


\begin{remark}
In the above theorem, we assume $m\geq 2$ since the case $m=1$ is immediate. Indeed, when $m=1$, we have $h(x)=x+c\in\mathbb{Z}[x]$, and the monogeneity of $f(x)$ and that of $f(h(x))=f(x+c)$ are equivalent. Further, for $r>1$, the monogeneity of $f(h(x^r))=f(x^r+c)$ can be obtained as a special case of the above theorem by taking $m=r$, $a=0$, $t=1$, and $b=c$.
\end{remark}

\begin{remark}
We prove the Theorem \ref{thm:2.1} in the more general setting of Krull valuations in Section~\ref{sec:monogenity of composition} see Theorem~\ref{VT2.1}. In fact, the results stated below can also be obtained in this general setting of Krull valuations by applying the results proved in Section~\ref{sec:monogenity of composition}. However, as the applications over $\mathbb{Q}$ are easy to understand and the proofs in the general setting would be essentially repetitive, we state and prove the below results only over $\mathbb{Q}$.
\end{remark}
The following corollary follows from Theorem~\ref{thm:2.1} and will be useful for studying the monogenity of compositions of binomial with arbitrary polynomials.
\begin{corollary}\label{T2.1}
    Let $f(x) \in \Z[x]$ be a monic polynomial of degree $n$, and let $h(x) = x^m + a x^{m-1} + b \in \Z[x]$ be a monic polynomial such that $f(h(x))$ is irreducible over $\Q$. 
    Suppose that for every prime divisor $p$ of $\gcd(a,m)$, we have $p^2 \mid a$.
    Then $f(h(x^t))$ is monogenic if and only if: 
    \begin{itemize}
        \item [\textup{(1)}] $f(x)$ is monogenic;
        \item  [\textup{(2)}] $f(b)$ squarefree whenever $m>2$. Further, if $m=2$ then $p^2 \nmid f(b)$ whenever $p \mid a f'(b)$;
        \item [\textup{(3)}] $p^2\nmid m^{mn}\,f\left(h\left(\frac{-a(m-1)}{m}\right)\right)$ for every $p\nmid \gcd(a,m)$;
        \item [\textup{(4)}] $p$ does not divide index of $f(x^p+b)$ for every $p\mid \gcd (a, m)$;
        \item [\textup{(5)}] $p$ does not divide index of $f(h(x^p))$ for every $p\mid t$.
    \end{itemize}
\end{corollary}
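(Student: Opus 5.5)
Corollary~\ref{T2.1} follows by specializing Theorem~\ref{thm:2.1}. We check that, under the extra hypothesis, each of the six conditions there either becomes one of the five conditions here or holds automatically. We keep the numbering of Theorem~\ref{thm:2.1}.

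First note the hypothesis in Theorem~\ref{thm:2.1} is irreducibility of $f(h(x^t))$. We read the corollary as assuming this too, since monogeneity of $f(h(x^t))$ only makes sense for irreducible $f(h(x^t))$. Equivalently, whenever the corollary refers to $f(h(x^t))$, that polynomial is irreducible.

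Under the hypothesis that $p^2\mid a$ for every prime $p\mid\gcd(a,m)$, there is no prime with $p\mid\gcd(a,m)$ and $p\,\|\,a$. Hence condition (5) of Theorem~\ref{thm:2.1} is vacuous and can be dropped. Condition (4) of the theorem applies to all $p\mid\gcd(a,m)$ with $p^2\mid a$, which is now every $p\mid\gcd(a,m)$. So it becomes condition (4) of the corollary verbatim. Conditions (1), (3) and (6) of the theorem transfer unchanged to conditions (1), (3) and (5) of the corollary.

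The only point needing care is condition (2) in the case $m=2$. The theorem requires $p^2\nmid f(b)$ only for primes $p$ dividing both $af'(b)$ and $\D_{f(h(x^t))}$. The corollary drops the requirement $p\mid\D_{f(h(x^t))}$, and I claim nothing is lost. Suppose $p^2\mid f(b)$. Then $p\mid f(b)$, and by Theorem~\ref{Discriminant}, applied to $h(x^t)=x^{mt}+ax^{(m-1)t}+b$ (here $\gcd = t$ and $mt-(m-1)t=t$), the discriminant $\D_{f(h(x^t))}$ contains the factor $f(b)^{k-1}$ with $k=(m-1)t$. If $k\ge 2$, then $p\mid\D_{f(h(x^t))}$ automatically. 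If $k=1$, that is $m=2$ and $t=1$, I will use the standard fact that a prime dividing the index divides the discriminant. Indeed, if $p^2\mid f(b)$ and $p\mid af'(b)$, then Dedekind's criterion at the factor $x$ of $h(x)\equiv x^2+ax+b$ shows non-maximality at $p$. Thus $p\mid \operatorname{ind}\mid \D$, so the extra divisibility condition is redundant in both cases. Alternatively, in the case $m=2$ one may simply observe that $p\nmid\D_{f(h(x^t))}$ forces $p$ not to divide the index. Then $p^2\nmid f(b)$ is not needed for monogeneity at $p$, and the two formulations of (2) give the same equivalence overall.

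I expect this last equivalence of the $m=2$ clause to be the main obstacle. Everything else is direct bookkeeping with the hypothesis $p^2\mid a$.
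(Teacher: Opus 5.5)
Your proposal is correct and follows the paper's route: the paper simply states that Corollary~\ref{T2.1} follows from Theorem~\ref{thm:2.1}, since under the hypothesis $p^2\mid a$ for every $p\mid\gcd(a,m)$, condition~(5) of the theorem is vacuous and the remaining conditions transfer directly. Your check that the clause $p\mid\D_{f(h(x^t))}$ can be dropped when $m=2$ supplies a detail the paper leaves implicit: for $t\ge 2$ you use the factor $f(b)^{t-1}$ of the discriminant, and for $t=1$ you use Proposition~\ref{pro:5.7} together with $\operatorname{ind}^2\mid\D$. Your closing ``alternatively'' remark would not suffice on its own, since it does not rule out a prime $p\nmid\D_{f(h(x^t))}$ with $p\mid af'(b)$ and $p^2\mid f(b)$, but your first argument already rules this out.
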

The following corollary provides a necessary and sufficient condition for the monogenity of the composition of arbitrary polynomial $f(x)$ with a binomial. Taking $a=0$ and $t=1$ in above corollary yields the following result.
\begin{corollary}
    Let $f(x) \in \Z[x]$ be a monic polynomial of degree $n$ such that $f(x^m+b)$ irreducible over $\Q$. Then $f(x^m+b)$ is monogenic if and only if 
    \begin{itemize}
        \item [\textup{(1)}] $f(x)$ is monogenic;
         \item  [\textup{(2)}] $f(b)$ squarefree;
         \item [\textup{(3)}] $p$ does not divide index of $f(x^p+b)$ for every $p\mid m$.
    \end{itemize}
\end{corollary}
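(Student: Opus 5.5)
The plan is to specialize Corollary~\ref{T2.1} to $a=0$ and $t=1$, so that $h(x)=x^m+b$ and $f(h(x^t))=f(x^m+b)$, and check that each of its five conditions reduces to the three conditions in the statement. The hypothesis of Corollary~\ref{T2.1}, that $p^2\mid a$ for every prime $p\mid\gcd(a,m)$, holds trivially since $a=0$. Here $\gcd(0,m)=m$, so the primes dividing $\gcd(a,m)$ are exactly the primes dividing $m$. We may assume $m\ge 2$, since for $m=1$ the statement is immediate by the first remark after Theorem~\ref{thm:2.1}.

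Next I would translate the conditions one at a time.
\begin{itemize}
\item Condition (1) of Corollary~\ref{T2.1} is literally condition (1) here.
\item Condition (5) is vacuous because $t=1$ has no prime divisors.
\item Condition (4) says $p$ does not divide the index of $f(x^p+b)$ for every $p\mid m$, which is condition (3) here.
\item Condition (3) is vacuous. It is imposed only for primes $p\nmid\gcd(a,m)=m$, and for such $p$ the quantity $m^{mn}f(h(0))=m^{mn}f(b)$ has $p$-adic valuation $v_p(f(b))$. So it asks $p^2\nmid f(b)$ for $p\nmid m$, which is already implied by square-freeness of $f(b)$ when $m>2$.
\end{itemize}

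The case needing care is condition (2) with $m=2$. When $a=0$, Corollary~\ref{T2.1}(2) only requires $p^2\nmid f(b)$ for $p\mid af'(b)=0$, that is, for every prime $p$. So it again says $f(b)$ is square-free, and for $m>2$ it is square-freeness verbatim. Hence both cases give condition (2) of the statement.

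Combining these, the conditions of Corollary~\ref{T2.1} are equivalent to (1)--(3). The main (mild) obstacle is bookkeeping the $p\nmid m$ versus $p\mid m$ split in condition (3) of Corollary~\ref{T2.1}. I would check that it never imposes anything beyond square-freeness of $f(b)$, and that square-freeness at primes $p\mid m$ is not lost: those primes are still covered by condition (2), which applies to all primes.
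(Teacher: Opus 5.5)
Your specialization of Corollary~\ref{T2.1} at $a=0$, $t=1$ is exactly the paper's (one-line) proof. Your bookkeeping of conditions (2)--(5) is correct for $m\ge 2$. In particular, with $a=0$, condition (2) forces square-freeness of $f(b)$ at every prime, including when $m=2$, and condition (3) then adds nothing.

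The one misstep is your dismissal of $m=1$. The remark after Theorem~\ref{thm:2.1} says $f(x+b)$ is monogenic if and only if $f(x)$ is. That shows condition (2) is \emph{not} necessary when $m=1$, so the statement is false there rather than immediate. For example, take $f(x)=x^2+1$ and $b=7$. Then $f(x+7)=x^2+14x+50$ is irreducible and monogenic, since its root $-7+i$ generates $\mathbb{Z}[i]$. But $f(7)=50$ is not square-free, and condition (3) is vacuous.

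The right treatment is to say that $m\ge 2$ is an implicit hypothesis inherited from Theorem~\ref{thm:2.1}. The paper's derivation via Corollary~\ref{T2.1} should be read the same way.
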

We wish to point out that the above corollary generalizes the main result of paper by Kaur, Kumar and Remete \cite{KKR}, which is obtained by taking $b=0$.

Having understood the monogeneity of polynomial compositions, a natural next step in studying number fields of higher degree is to consider iterates of a polynomial and ask whether the resulting iterated polynomials remain monogenic. These observations leads to the following natural question concerning the monogeneity of polynomial iterates:
\begin{question}
Let $f(x)$ and $g(x)$ be monic polynomials in $\mathbb{Z}[x]$. Under what conditions is $(f\circ g^N)(x)$ monogenic?
\end{question}

Partial results in this direction are known for several important families of polynomials. Iterates of quadratic polynomials of the form $f(x)=x^2-c$ have been studied in \cite{castillo,Li}, where necessary and sufficient conditions for their monogeneity were obtained. In 2023, Smith \cite{Smith23} established a necessary and sufficient condition for $f^n(x)$ to be monogenic when $f(x)=x^p-a\in\mathbb{Z}[x]$ has prime degree. More recently, Sharma, Sarma and Laishram~ \cite{S-L-S} obtained a necessary and sufficient condition for the iterates of a $p$-Eisenstein polynomial $f(x)\in\mathbb{Z}[x]$ to be monogenic, with their criterion formulated in terms of the square-freeness of the product $\prod_{f'(a_i)=0}f^n(a_i)$. Smith and Wolske \cite{hanson} also investigated monogeneity and prime splitting in extensions generated by roots of iterates of quadratic polynomials $f(x)=x^2+bx+c\in\mathbb{Z}[x]$.

More recently, the study of monogenity of polynomial iterates has been considered from a broader dynamical perspective. K{\"o}nig, Smith and Wolske~\cite{konig2024critical} established necessary and sufficient conditions, in terms of the critical orbit of a monic irreducible polynomial $f(x)\in\Z_K[x]$, for its iterates to be monogenic. They further used these results to construct dynamically monogenic polynomials that give rise to infinite towers of monogenic number fields, for e.g., monogeniety of tower of number fields generated by $T_{d}^n(x) - a$, where $T_d(x)$ is a Chebyshev polynomial of degree $d$.

In the dynamical setting, while the existing literature has mainly focused on the iteration of a single polynomial, we consider a more general compositional framework. To the best of our knowledge, this is the first study to characterize monogeneity in infinite dynamical towers arising from the composition of distinct polynomials. In particular, we establish algebraic conditions under which composite iterates of the form $f\circ g^n(x)$ remain monogenic. Our next result gives a necessary and sufficient conditions for the monogeneity of $f\circ g^N$, where $f(x)$ is an arbitrary polynomial and $g(x)=x^{mt}+ax^{(m-1)t}+b$ is a trinomial. 
{}

\begin{theorem}\label{thm:2.6}
Let $f(x)\in \mathbb{Z}[x]$ be a monic polynomial of degree $n$ and $h(x)=x^{m}+ax^{m-1}+b \in \mathbb{Z}[x].$ Define $\kappa= t\gcd(a,m)$ and $H(x)=h(x^t)$. Assume that, for every prime $p\mid \kappa$, each irreducible factor of $f(x)$ over $\mathbb{F}_p$ has multiplicity at least two. Further, assume that $p^2\mid a$ for every prime $p\mid \gcd(a,m)$. For some $N \in \N$, suppose that $f\circ H^{N}(x)$ is irreducible over $\mathbb{Q}$. Then $f\circ H^{N}(x)$ is monogenic if and only if the following conditions hold:
\begin{itemize}
    \item[\textup{(1)}] $f(x)$ is monogenic;
    \item[\textup{(2)}] For every prime $p\mid \kappa$,  we have $p\nmid \operatorname{ind}(f(x^p));$
    \item[\textup{(3)}] $(f\circ H^{i})(0)$ are square-free for every $1\leq i\leq N$ whenever $m>2$. Further, if $m=2$ then $p^2 \nmid (f\circ H^i)(0)$ whenever $p \mid a (f\circ H^{i-1})'(H(0))$ for every $1\leq i\leq N$;
    \item[\textup{(4)}]  For each  $j\in\{0,1,\ldots,N-1\}$,
    \[ p^2 \nmid m^{mn(tm)^j} \,(f\circ H^{j}) \!\left(
    h\!\left(\frac{-a(m-1)}{m}\right) \right) \text{ whenever } p\nmid \gcd(a, m). \]
\end{itemize}
\end{theorem}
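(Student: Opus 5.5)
We argue by induction on $N$, applying Theorem~\ref{thm:2.1} (more precisely Corollary~\ref{T2.1}, since $p^2\mid a$ for every $p\mid\gcd(a,m)$) at each step with the outer polynomial $f_j:=f\circ H^{j}$ of degree $n(tm)^j$. Write $f\circ H^{j+1}(x)=f_j(h(x^t))$. Irreducibility of $f\circ H^N$ forces irreducibility of every $f\circ H^{j}$ with $j\le N$, because a factorization of $f\circ H^j$ would pull back under $H^{N-j}$. So Corollary~\ref{T2.1} applies at every level. It states that $f_{j+1}$ is monogenic if and only if the following hold:
\begin{enumerate}[label=(\alph*)]
\item $f_j$ is monogenic;
\item $f_j(b)$ satisfies the squarefree condition;
\item $p^2\nmid m^{m\deg f_j}\, f_j\bigl(h(\tfrac{-a(m-1)}{m})\bigr)$ for $p\nmid\gcd(a,m)$;
\item $p\nmid \operatorname{ind} f_j(x^p+b)$ for $p\mid\gcd(a,m)$;
\item $p\nmid\operatorname{ind} f_j(h(x^p))$ for $p\mid t$.
\end{enumerate}
Unwinding (a) down to $j=0$ gives condition (1). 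Since $H(0)=b$, we have $f_j(b)=(f\circ H^{j+1})(0)$, and $f_j'(b)=(f\circ H^{j})'(H(0))$ up to the obvious reindexing, so (b) for $j=0,\dots,N-1$ is exactly condition (3). Likewise (c), with $\deg f_j=n(tm)^j$, is exactly condition (4).

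The real work is to show that, under the hypothesis that every irreducible factor of $f$ mod $p$ has multiplicity $\ge2$ for $p\mid\kappa$, conditions (d) and (e) at all levels $j$ collapse to the single condition (2), namely $p\nmid\operatorname{ind}(f(x^p))$. For $p\mid\kappa$ we have $p\mid\gcd(a,m)$ or $p\mid t$. In either case, modulo $p$ the map $H$ becomes, after Frobenius considerations, a $p$-th power up to translation: $h(x^t)\equiv (x^{m_0}+\cdots)^{p}+b$ whenever $p\mid m$ and $p\mid a$, or $p\mid t$.

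The plan is to apply Dedekind's criterion. The hypothesis that each irreducible factor $\phi$ of $\bar f$ is repeated means that $p\mid\operatorname{ind}(f)$ is detected by $\gcd(\bar\phi,\overline{(f-\prod\phi_i^{e_i})/p})$. Composing with $x^p+b$ or $h(x^p)$ preserves the repeated-factor structure. Since $p\mid\operatorname{ind}(f)$ is excluded by (1), Dedekind's remainder for $f_j(x^p+b)$ should reduce to the one for $f(x^p)$, shifted by the inner map. The term $\frac{a}{p}x^{m-1}f'(\cdot)$ vanishes mod $p$ because $p^2\mid a$, and the correction terms coming from $f_j=f\circ H^j$ contribute only multiples of $p$ times $p$-th powers.

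I expect this reduction to be the main obstacle. It means computing $f(H^j(x^p)+\ldots)$ modulo $p^2$ and showing that the obstruction polynomial is congruent to a composition of the obstruction polynomial of $f(x^p)$ with a separable-in-the-right-sense map. One then has to check that coprimality with the radical is preserved under pulling back by $H^j$. I would first try to use the fact that $p$-th power maps commute with reduction, so the Dedekind remainder transforms as $R(x)\mapsto R(H^j(x))$ modulo $p$. I would then use that $H^j$ mod $p$ induces an injective map on the relevant residue factors, so that common factors correspond.

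Conversely, condition (2) itself arises as (d) or (e) at level $j=0$, so it is necessary. This completes the equivalence, with the $m=2$ refinement of (3) handled exactly as in Corollary~\ref{T2.1}.
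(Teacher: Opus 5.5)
\textbf{Verdict: there is a genuine gap.} Your bookkeeping matches the paper's proof: induction on $N$, Corollary~\ref{T2.1} applied to $f_j=f\circ H^{j}$ (irreducible because $f\circ H^N$ is), and (a)--(c) giving (1), (3), (4).

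The gap is the step you yourself flag as the main obstacle. It is the only non-formal part of the theorem, and you never prove that (d) and (e) at every level are equivalent to (2). The route you sketch would not get there as stated. Reduction modulo $p$ does not make $H^j$ induce an injective map on residue factors: $\bar g_i\circ\overline{H^j}$ generally splits into several irreducible factors, all lying over the same $\bar g_i$. Also, no expansion of $f(H^j(x^p)+\cdots)$ modulo $p^2$ is needed. Your necessity argument for (2) fails as written too. Conditions (d) and (e) at level $j=0$ concern $f(x^p+b)$ and $f(h(x^p))$, not $f(x^p)$, so both directions depend on the missing reduction.

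The missing idea is a root pull-back argument, which the paper packages in Lemma~\ref{lemma:3.6} (built from Theorem~\ref{Thm:4.1}, Corollary~\ref{Cor:4.2} and Lemma~\ref{lem:5.14}).
\begin{itemize}
\item Fix $p\mid\kappa$ and write $f=\prod_i g_i^{e_i}+pR$ with all $e_i\ge 2$. This is where the multiplicity hypothesis enters.
\item Let $K\in\mathbb{Z}[x]$ be any monic nonconstant polynomial, e.g.\ $K=H^{j}(x^p+b)$, $H^{j}(h(x^p))$ or $x^p$. Each irreducible factor $\bar\phi$ of $\overline{f\circ K}$ divides some $\overline{g_i\circ K}$.
\item Since $e_i\ge 2$, the product $\prod_k(g_k\circ K)^{e_k}$ lies in $\langle p,\phi\rangle^2$. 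Hence $f\circ K\in\langle p,\phi\rangle^2$ if and only if $p\,(R\circ K)\in\langle p,\phi\rangle^2$, i.e.\ if and only if $\bar\phi\mid\overline{R\circ K}$.
\item If $\alpha$ is a root of $\bar\phi$, then $\overline{K}(\alpha)$ is a common root of $\bar g_i$ and $\bar R$. Irreducibility of $\bar g_i$ then forces $\bar g_i\mid\bar R$; the converse is immediate.
\item Every $\bar g_i$ has at least one such $\bar\phi$ above it. So by Theorem~\ref{valuation dede}, $p\nmid\operatorname{ind}(f\circ K)$ if and only if $\bar g_i\nmid\bar R$ for all $i$, a condition independent of $K$.
\end{itemize}
This collapses (d) and (e) at all levels simultaneously to $p\nmid\operatorname{ind}(f(x^p))$. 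Incidentally, it also shows that under the multiplicity hypothesis, (2) already follows from (1).
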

We note the following important fact: if $f\circ H^N$ is monogenic, then all the intermediate compositions are also monogenic. In other words, $f\circ H^i$ is monogenic for every $0\leq i\leq N$. The following corollary is immediate from Theorem~\ref{thm:2.6} by taking $a=0$ and $t=1$.
\begin{corollary}\label{cor:2.7}
Let $f(x) \in \mathbb{Z}[x]$ be a monic polynomial of degree $n$ and $g(x) = x^m + b \in \Z[x]$ be such that $f \circ g^N$ is irreducible over $\mathbb{Q}$ for some $N \in \N$. Assume that for every prime $p \mid m$, each irreducible factor of $f(x)$ over $\mathbb{F}_p$ has multiplicity at least two. Then $(f\circ g^N)(x)$ is monogenic if and only if the following conditions hold:
\begin{itemize}
    \item[\textup{(1)}] $f(x)$ is monogenic;
    \item[\textup{(2)}] $p$ does not divide the index of $f(x^p)$ for every prime $p \mid m$;
    \item[\textup{(3)}] $(f\circ g^i)(0)$ are squarefree for every $1 \leq i \le N$.
\end{itemize}
\end{corollary}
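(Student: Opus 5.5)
The plan is to specialize Theorem~\ref{thm:2.6} to $a=0$ and $t=1$, so that $h(x)=x^m+b$ and $H(x)=h(x)=g(x)$, and then to check that each hypothesis and each condition of Theorem~\ref{thm:2.6} reduces to the corresponding item of Corollary~\ref{cor:2.7}.

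First the hypotheses. With $a=0$ we have $\gcd(a,m)=\gcd(0,m)=m$, so $\kappa=t\gcd(a,m)=m$. The prime divisors of $\kappa$ are therefore exactly the primes $p\mid m$, and the multiplicity assumption on the irreducible factors of $f$ modulo $p$ becomes the one stated in the corollary. The extra assumption ``$p^2\mid a$ for every prime $p\mid\gcd(a,m)$'' holds automatically, since $a=0$. Irreducibility of $f\circ H^N=f\circ g^N$ is assumed directly.

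Next the conditions, one at a time.
\begin{itemize}
\item[(1)] Condition (1) of Theorem~\ref{thm:2.6} is unchanged.
\item[(2)] Condition (2) reads $p\nmid\operatorname{ind}(f(x^p))$ for every prime $p\mid\kappa=m$, which is item (2) of the corollary.
\item[(3)] For $m>2$, condition (3) of the theorem is exactly square-freeness of $(f\circ g^i)(0)$ for $1\le i\le N$. The only care needed is the case $m=2$, where the theorem gives the weaker requirement ``$p^2\nmid (f\circ H^i)(0)$ whenever $p\mid a(\cdots)$''. Since $a=0$, every prime divides $a(f\circ H^{i-1})'(H(0))=0$, so the requirement becomes $p^2\nmid (f\circ g^i)(0)$ for all $p$. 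This is again square-freeness, so item (3) holds uniformly in $m\ge 2$. For $m=1$, $g$ is linear and the statement is trivial, as in the Remark after Theorem~\ref{thm:2.1}; if needed I would dispatch that case separately.
\item[(4)] Condition (4) is required only for primes $p\nmid\gcd(a,m)=m$. For such $p$ the factor $m^{mn(tm)^j}$ is a $p$-adic unit, so the condition reads $p^2\nmid (f\circ g^j)(h(0))=(f\circ g^{j})(b)=(f\circ g^{j+1})(0)$, because $h(-a(m-1)/m)=h(0)=b$. For $0\le j\le N-1$ this is the statement $p^2\nmid (f\circ g^{i})(0)$ for $1\le i\le N$ and $p\nmid m$, which is already contained in item (3). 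So condition (4) is redundant and can be dropped.
\end{itemize}

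Assembling these reductions gives the equivalence in Corollary~\ref{cor:2.7}. The main point needing care is (4), where the primes dividing $m$ are excluded: I would verify that nothing is lost there, and this is secured because (3) already imposes square-freeness at all primes. The other delicate point is the $m=2$ case of (3), handled by noting that $a=0$ makes every prime satisfy the divisibility trigger.
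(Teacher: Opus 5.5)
Your proposal is correct and matches the paper, which obtains Corollary~\ref{cor:2.7} directly from Theorem~\ref{thm:2.6} by setting $a=0$, $t=1$; you carry out exactly the verifications left implicit there: $\kappa=m$; the $m=2$ clause of (3) reduces to square-freeness because every prime divides $0$; and condition (4) is redundant because $h(0)=b$ gives $(f\circ g^j)(b)=(f\circ g^{j+1})(0)$. One correction to your aside: for $m=1$ the statement is not trivial but false (e.g.\ $f(x)=x^2+1$, $b=7$, $N=1$: $f(x+7)$ is monogenic since $\mathbb{Z}[-7+i]=\mathbb{Z}[i]$, yet $f(7)=50$ is not squarefree), so that case cannot be dispatched and must be excluded, as the paper implicitly does by working with $m\ge 2$ as in Theorem~\ref{thm:2.1}.
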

The main beauty of the above result is that, after the first composition $f \circ g$ being monogenic, the monogeneity of the entire tower of number fields depends only on the squarefreeness of $(f\circ g^i)(0)$, with no additional conditions required. Our next result removes the need to check the repeated-root condition appearing in the above corollary and provides a direct way to construct new monogenic polynomials from any given monogenic polynomial. In particular, once we start with a monogenic polynomial, the result gives a systematic construction of further monogenic polynomials without having to verify the repeated-root condition for each prime divisor $p$ of $m$.

\begin{theorem}\label{thm:2.8}
Let $f(x) \in \mathbb{Z}[x]$ be a monic polynomial of degree $n$ and  $g(x) = x^m + b \in \mathbb{Z}[x]$. Let $\ell$ be an integer divisible by  $\operatorname{rad}(m)$ and define $F(x) = f(x^{\ell})$. Assume that $\mathcal{F}(x)=F(g(x))$ is irreducible over $\mathbb{Q}$. Then $\mathcal{F}(x)$ is monogenic if and only if the following conditions hold:
\begin{itemize}
    \item[\textup{(1)}] $f(x)$ is monogenic;
    \item[\textup{(2)}] $f(0)$ and $f(b^{\ell})$ are squarefree;
    \item[\textup{(3)}] $p$ does not divide the index of $f(x^p)$ for every prime $p \mid \ell$.
\end{itemize}
\end{theorem}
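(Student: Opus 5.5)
The plan is to apply the binomial corollary (the case $a=0$, $t=1$ of Corollary~\ref{T2.1}) to the pair $F(x)=f(x^\ell)$ and $g(x)=x^m+b$. That corollary says $\mathcal{F}=F(x^m+b)$ is monogenic if and only if three things hold:
\begin{itemize}
\item[(i)] $F$ is monogenic;
\item[(ii)] $F(b)=f(b^\ell)$ is squarefree;
\item[(iii)] $p\nmid \operatorname{ind}(F(x^p+b))$ for every prime $p\mid m$.
\end{itemize}
Irreducibility of $\mathcal{F}$ forces irreducibility of $F$ and of $f$. The task is then to translate (i) and (iii) into conditions on $f$ alone.

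For (i) I will apply the same corollary with $b=0$, to $f$ composed with $x^\ell$. This gives: $F=f(x^\ell)$ is monogenic if and only if $f$ is monogenic, $f(0)$ is squarefree, and $p\nmid\operatorname{ind}(f(x^{p}))$ for all $p\mid \ell$. Together with (ii), this already produces exactly conditions (1)–(3) of the theorem. So it remains to show that (iii) is implied by (1)–(3), so that it imposes nothing extra.

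\textbf{Key step: disposing of (iii).} Fix a prime $p\mid m$. Since $\operatorname{rad}(m)\mid \ell$, we also have $p\mid\ell$. Modulo $p$,
\[
F(x^p+b)=f\bigl((x^p+b)^\ell\bigr)\equiv f\bigl((x^{\ell}+b^{\ell/p})^{p}\bigr)\pmod p,
\]
so $F(x^p+b)\equiv \bar{f}(y^p)$ with $y=x^\ell+b^{\ell/p}$, and $\bar f(y^p)=\bar f_1(y)^p$ for some $\bar f_1$ (using $p\mid \ell$). Thus every irreducible factor of $\bar F$ and of $\overline{F(x^p+b)}$ over $\F_p$ appears with multiplicity at least $p\ge 2$. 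This is precisely the situation of the hypothesis in Corollary~\ref{cor:2.7}, and the mechanism behind it should again apply. I would run Dedekind's criterion for $F(x^p+b)$ at $p$. Write $F(x^p+b)=\prod \bar g_i^{e_i}+pM(x)$. The Dedekind obstruction is governed by the residue of $M$ modulo the repeated factors. I will compare it with the analogous residue for $f(x^p)$ via the substitution $x\mapsto x^{p}+b$ (i.e.\ via Theorem~\ref{VT2.1} in the form already used for Corollary~\ref{T2.1}, condition (4), with $h=x^p+b$). The goal is to show that it reduces to the combination of three facts: $p\nmid\operatorname{ind}(f(x^p))$, $p^2\nmid f(b^\ell)$, and $p^2\nmid f(0)$.

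I expect this reduction to be the main obstacle: showing $p\nmid \operatorname{ind}(f(x^{\ell p}+\cdots))$ after the shift by $b$ needs care. I would try first to prove the auxiliary claim that, when all residual factors are $p$-th powers, $p\mid \operatorname{ind}(F(x^p+b))$ iff $p\mid\operatorname{ind}(F)$ or $p^2\mid F(b)$. This can be done by computing $F(x^p+b)-\bar F_1(x^{\ell}+b^{\ell/p})^p\cdots$ modulo $p^2$ and using that $(x^p+b)^\ell-(x^\ell+b^{\ell/p})^p$ is divisible by $p$ with a controlled quotient. Sufficiency then follows from (1)–(3) via the above, and necessity is immediate from the corollary applications.
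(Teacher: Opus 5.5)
\textbf{There is a genuine gap in the key step.} Your outer skeleton matches the paper's: apply the binomial case of Corollary~\ref{T2.1} to $F(x^m+b)$, then again to $F=f(x^\ell)$. Using the binomial corollary rather than Corollary~\ref{cor:2.7} in the second step is in fact the cleaner choice, since no multiplicity hypothesis on $\bar f$ is available there.

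The only step with real content is showing that condition (iii), $p\nmid\operatorname{ind}(F(x^p+b))$ for $p\mid m$, is automatic. You leave it unproved, and the route you sketch does not work:
\begin{itemize}
\item The congruence $(x^p+b)^\ell\equiv(x^\ell+b^{\ell/p})^p\pmod p$ is false unless $\ell$ is a power of $p$. For $p=2$, $\ell=6$, $b=1$ the difference is $x^8+x^4$ modulo $2$. The correct reduction is $\overline{F(x^p+b)}=\bar F(x+b)^p$.
\item So the planned mod-$p^2$ computation of $F(x^p+b)-\bar F_1(x^\ell+b^{\ell/p})^p$ with a ``controlled quotient'' has no valid starting point.
\item Comparing with $f(x^p)$ under $x\mapsto x^p+b$ yields $f((x^p+b)^p)$, which is not $F(x^p+b)$ unless $\ell=p$.
\end{itemize}

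\textbf{The missing idea} is to avoid explicit residues and use the ideal-theoretic substitution results. Since $p\mid\ell$, we have $\bar F=\bar f(x^{\ell/p})^p$, so $F\in\langle p,G^2\rangle$ for every irreducible factor $G$ of $\bar F$.
\begin{itemize}
\item Theorem~\ref{Thm:4.1} with $h(x)=x^p+b$ gives $F\notin\langle p,G\rangle^2\Rightarrow F(x^p+b)\notin\langle p,G(x^p+b)\rangle^2$.
\item Lemma~\ref{lem:5.14} passes this to every irreducible factor of $\overline{G(x^p+b)}$.
\item These factors exhaust the irreducible factors of $\overline{F(x^p+b)}$, all of which are necessarily repeated. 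Theorem~\ref{valuation dede} then gives $p\nmid\operatorname{ind}(F)\Rightarrow p\nmid\operatorname{ind}(F(x^p+b))$.
\end{itemize}
Hence (iii) follows from (i).

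Your auxiliary claim is true, but its disjunct $p^2\mid F(b)$ is superfluous. In that case $x-b$ is automatically a repeated factor of $\bar F$, so $p^2\mid F(b)$ already forces $p\mid\operatorname{ind}(F)$.

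The paper reaches the same point slightly differently. It applies Corollary~\ref{cor:2.7} directly to $F$, whose hypothesis holds because $p\mid\ell$. This turns (iii) into $p\nmid\operatorname{ind}(f(x^{p\ell}))$, which the paper then identifies with $p\nmid\operatorname{ind}(f(x^p))$ by the argument behind \eqref{eqn:3.16}.
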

\begin{theorem}\label{thm:2.9}
Let $f(x)=x^n+a$ and $g(x)=x^m+b$ be polynomials in $\mathbb{Z}[x]$. Assume that $\operatorname{rad}(m)\mid n$. Given any $N\in\mathbb{N}$, if $f \circ g^N$ is irreducible then the composition $f\circ g^{N}$ is monogenic if and only if the following conditions hold:
\begin{itemize}
    \item[(1)] $(f\circ g^i)(0)$ square free for $0\leq i\leq N$;
    \item[(2)] $p^2\nmid [(-a)^p+a]$ for every $p\mid n$.

\end{itemize}
\end{theorem}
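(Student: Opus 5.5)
Theorem~\ref{thm:2.9} is a specialization of Theorem~\ref{thm:2.8} and Corollary~\ref{cor:2.7}. Since $\operatorname{rad}(m)\mid n$, the binomial $f(x)=x^n+a$ has the form $f_0(x^{\ell})$ with $\ell=n$ and $f_0(x)=x+a$. For $N=1$, Theorem~\ref{thm:2.8} applies verbatim with $F=f$ and $f_0$ in place of $f$, and I will translate its conditions. First, $f_0$ is linear, hence trivially monogenic. Second, $f_0(0)=a=f(0)$ and $f_0(b^{n})=b^n+a=(f\circ g)(0)$, so condition (2) of Theorem~\ref{thm:2.8} is exactly condition (1) for $i=0,1$.

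The main computation is condition (3) of Theorem~\ref{thm:2.8} for $f_0(x^p)=x^p+a$ with $p\mid n$. I would invoke Dedekind's criterion here. Modulo $p$ we have $x^p+a\equiv (x+a)^p$. Writing $x^p+a=(x+a)^p+p\,M(x)$, Dedekind's criterion says $p$ divides the index iff $(x+a)\mid \overline{M}$, i.e. iff $p^2\mid (-a)^p+a$. Evaluating at $x=-a$ gives $pM(-a)=(-a)^p+a$, which yields exactly condition (2). This settles $N=1$.

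For general $N$, I would use the iterate criterion, Corollary~\ref{cor:2.7}, applied to $f=x^n+a$ and $g=x^m+b$. Its hypotheses need checking: for every prime $p\mid m$ we have $p\mid n$, so $f\equiv (x^{n/p}+a)^p \pmod p$, and every irreducible factor of $f$ over $\mathbb{F}_p$ has multiplicity at least $p\ge2$. The conditions then become the following.
\begin{itemize}
\item Condition (1): $f$ is monogenic iff $a$ is squarefree (for primes $p\nmid na$ nothing happens). For primes $p\mid n$ with $p\nmid a$, this condition should be absorbed into the next one, via the same Dedekind computation applied to $x^n+a$, which reduces to $p^2\nmid (-a)^p+a$ through the factorization $x^n+a=(x^{n/p})^p+a$.
\item Condition (2): $p\nmid\operatorname{ind}(f(x^p))$ for $p\mid m$. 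I expect this to reduce to $p^2\nmid (-a)^p+a$ together with $a$ squarefree, again by the Dedekind computation above, now with $x^{np}+a$.
\item Condition (3): this is condition (1) for $1\le i\le N$.
\end{itemize}

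The main obstacle is the prime bookkeeping. Condition (2) of the theorem is asserted for all $p\mid n$, while Corollary~\ref{cor:2.7} only gives $p\mid m$. I will handle $p\mid n$, $p\nmid m$ through the monogenity of $f$ itself. In that case the index contribution of $x^n+a$ at $p$ is governed by Dedekind's criterion on $(x^{n/p^{v}}+a)^{p^v}$, whose remainder evaluated at roots of $x^{n/p^v}+a$ reduces to the single quantity $((-a)^p+a)/p$ modulo $p$, using $a^{p}\equiv a$ modulo $p$. I would verify this carefully, also for $p=2$, where $(-a)^2+a=a(a+1)$. Combining both cases gives the stated equivalence.
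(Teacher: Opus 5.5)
Your argument is correct. It rests on the same two ingredients as the paper's proof, Corollary~\ref{cor:2.7} and the monogenity criterion for binomials, but it is organized more economically.

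The paper inducts on $N$. At each step it applies Corollary~\ref{cor:2.7} with $N=1$ to $f\circ g^{k-1}$, re-verifying the multiplicity hypothesis for $f\circ g^{k-1}$ modulo each $p\mid m$. It then uses Lemma~\ref{lemma:3.6} to push the condition $p\nmid\operatorname{ind}\bigl((f\circ g^{k-1})(x^p)\bigr)$ back to $p\nmid\operatorname{ind}(f(x^p))$. Finally it quotes Theorem~\ref{thm:3.1} (Jhorar--Khanduja) for $x^n+a$ and $x^{np}+a$.

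You apply Corollary~\ref{cor:2.7} once, at level $N$. That corollary already encodes the whole tower, so the induction and Lemma~\ref{lemma:3.6} become unnecessary. You also rederive the local binomial criterion from Dedekind's criterion instead of citing it. Your observation that the case $N=1$ is literally Theorem~\ref{thm:2.8} with $f_0(x)=x+a$ and $\ell=n$ is a nice cross-check the paper does not make, though it is redundant once the general-$N$ argument is in place.

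A few things to tidy up:

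\begin{itemize}
\item The bullet ``$f$ is monogenic iff $a$ is squarefree'' is false as written. The correct statement, which your last paragraph effectively proves, is that $x^n+a$ is monogenic if and only if $a$ is squarefree and $p^2\nmid(-a)^p+a$ for every $p\mid n$. For $p\mid a$ the latter condition just says $p^2\nmid a$.
\item Condition (2) of Corollary~\ref{cor:2.7} then needs no separate work. Every $p\mid m$ divides $n$, and the local condition at $p$ for $x^{np}+a$ coincides with that for $x^n+a$. So condition (2) is implied by condition (1), and your phrase ``together with $a$ squarefree'' overstates what is needed locally.
\item In the key computation with $n=p^v s$, $p\nmid a$, $p\nmid s$, note that $x^n+a\equiv(-a)^{p^v}+a$ modulo $x^s+a$ in $\mathbb{Z}[x]$. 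By Gauss's lemma, the Dedekind remainder $\bigl(x^n+a-(x^s+a)^{p^v}\bigr)/p$ is therefore congruent to the constant $\bigl((-a)^{p^v}+a\bigr)/p$ modulo $x^s+a$.
\item To replace $p^v$ by $p$ you need $c^{p^v}\equiv c^p\pmod{p^2}$ for $p\nmid c$. This follows from $y\equiv z\pmod p\Rightarrow y^p\equiv z^p\pmod{p^2}$, not from $a^p\equiv a\pmod p$ alone.
\item Like the paper, you tacitly assume $m\ge 2$. For $m=1$, $f\circ g^N$ is a translate of $f$, and squarefreeness of $(f\circ g^i)(0)$ for $i\ge1$ is not necessary.
\end{itemize}
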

Taking $m=n$ and $b=a$ in Theorem~\ref{thm:2.9} recovers the main result of Sharma, Sarma and Laishram~\cite{S-L-S}.
\begin{remark}
   For $f(x)=x^n+a$ and $g(x)=x^m+b$ the polynomial $f$ is $g$-stable whenever there exists a prime $p$ such that $p\mid \gcd(a,b)$ and $p^2\nmid a.$
\end{remark}

We see from \eqref{formula} that if $\D_f$ is squarefree, then $f(x)$ is monogenic. Using this observation, for positive real numbers $A_1$ and $A_2$ with $A_1 > A_2^{1+\D_0}$, and $5 \leq n \equiv 1 \pmod{4}$, Mukhopadhyay,  Murty and Srinivas~\cite{MMK} showed that
\[
\#\{ (a_1,a_2) \in \Z \times \Z : A_i \leq |a_i| \leq 2A_i, \; x^n + a_1x + a_2 \text{ is monogenic} \} \gg A_1A_2.
\]

This result was proved under the assumption of the $abc$-conjecture. In 2012, Kedlaya~\cite{kedlaya} provided a method to generate infinitely many monogenic polynomials, of any degree greater than one, whose discriminants are squarefree. Later, in \cite{Jones, L2}, Jones constructed further infinite families of monogenic polynomials with squarefree discriminants of various types.\\
In past years, several authors have obtained many interesting quantitative results on irreducible polynomials with non-square-free discriminants. Recently, Jones~\cite{L1} provided infinitely many monogenic polynomials of a given prime degree~$p$ whose discriminant is not squarefree. Part of this result relied on the generalized $abc$-conjecture for number fields. Using a method similar to that of Jones, Barman, Narode and Wagh~\cite{RAV} constructed another infinite family of monogenic polynomials of degree~$q$ with non-squarefree discriminants, where $q$ is a prime of the form $q = q_0 + q_1 - 1$, with $q_0$ and $q_1$ both prime. Furthermore, Jones and White~\cite{L3} gave an asymptotic formula for certain special types of monogenic trinomials.\\ 
In this direction, our next theorem provides a lower bound on the number of pairs of polynomials $f(x)$ and $g(x)$ such that both $f(x)$ and $f(g(x))$ are monogenic under the assumption of $abc$-conjecture.
\begin{theorem}\label{analytic}
Let $q$ be a prime. Let $n \ge 2$ and $m \ge 3$ be integers and  $a$ be a nonzero integer such that $q \mid a$ and $\gcd(a,m)=1$. Put $\epsilon = (-1)^{m+1}a^m(m-1)^{m-1}$ and, for $b \in \mathbb{Z}$, define \( h_b(x) = x^m + ax^{m-1} + b \).
Let $f(x) = x^n + c_{n-1}x^{n-1} + \dots + c_0 \in \mathbb{Z}[x]$ be a monic, $q$-Eisenstein, and monogenic polynomial. Set
\begin{equation*}
    \mathcal{N}_f(b) = \sum_{j=0}^n c_j(m^m b + \epsilon)^j m^{m(n-j)} \in \mathbb{Z}[b]; \quad R_f(y) = \frac{f(qy)}{q} \cdot \frac{\mathcal{N}_f(qy)}{q} \in \mathbb{Q}[y].
\end{equation*}
Suppose that for every prime $p \le 2n$ such that $p \neq q$ and $p \nmid m$, we have $p^2 \nmid R_f(0)$. If the $abc$-conjecture is true, then
\begin{equation*}
    \#\{1 \le b \le Y : q \mid b \text{ and } f(h_b(x)) \text{ is monogenic}\} \ge (\lambda + o(1))\left(\frac{Y}{q}\right),
\end{equation*}
where $\lambda = \prod_{p}\left(1 - \frac{\omega(p)}{p^2}\right) > 0$ and $\omega(p) = \#\{1 \le \alpha \le p^2 : R_f(\alpha) \equiv 0 \pmod{p^2}\}$. In particular, $f(x)$ and $\cF_b(x) = f(h_b(x))$ are simultaneously monogenic for a positive proportion of the integers $b$.
\end{theorem}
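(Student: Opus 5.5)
We deduce Theorem~\ref{analytic} from Corollary~\ref{T2.1} with $t=1$. Since $\gcd(a,m)=1$, there are no primes $p\mid\gcd(a,m)$, so conditions (4) and (5) are vacuous. Condition (1) holds by hypothesis.

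Irreducibility comes first. For $q\mid b$, the trinomial $h_b(x)\equiv x^{m-1}(x+a)\equiv x^m \pmod q$, because $q\mid a$. Then $f(h_b(x))\equiv h_b(x)^n\equiv x^{mn}\pmod q$. Its constant term is $f(b)\equiv c_0 \pmod{q^2}$, since $q\mid b$ and $q\mid c_j$ for $j<n$. Hence $q\,\|\, f(b)$, and $f(h_b(x))$ is $q$-Eisenstein, in particular irreducible.

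Next we rewrite condition (3) in terms of $\mathcal N_f$. Evaluating $h_b$ at $-a(m-1)/m$ gives
\[
m^m\,h_b\!\left(\tfrac{-a(m-1)}{m}\right)=m^m b+(-1)^{m-1}a^m(m-1)^{m-1}\Big(-\tfrac{m-1}{1}\cdot\tfrac1{m}\cdot m+\dots\Big).
\]
Carrying this out carefully should give exactly $m^m b+\epsilon$. Consequently
\[
m^{mn}f\!\left(h_b\!\left(\tfrac{-a(m-1)}{m}\right)\right)=\mathcal N_f(b).
\]
So (2)--(3) (with $m\ge3$) amount to: $f(b)$ is squarefree, and $p^2\nmid\mathcal N_f(b)$ for every $p$.

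Now set $b=qy$. At $q$ itself, $q\,\|\,f(qy)$ was shown above, and $\mathcal N_f(qy)\equiv m^{mn}c_0+(\text{terms in }q^2)\pmod{q^2}$, using $q\mid\epsilon$ and $q\mid c_j$. Since $q\nmid m$ (as $q\mid a$ and $\gcd(a,m)=1$), $q\,\|\,\mathcal N_f(qy)$. Thus $R_f(y)=\frac{f(qy)}{q}\cdot\frac{\mathcal N_f(qy)}{q}$ is an integer polynomial prime to $q$. Moreover, $f(h_b)$ is monogenic as soon as $R_f(y)$ is squarefree, apart from a possible common factor of $f(b)$ and $\mathcal N_f(b)$ at a prime $p$ with $p\|$ each. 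Such a $p$ divides $\operatorname{Res}$ and is harmless for the condition-(3) count, but not for squarefreeness of $R_f$. Since squarefreeness of $R_f(y)$ implies both conditions, it suffices to count $y\le Y/q$ with $R_f(y)$ squarefree. This is a lower bound, which is all we need.

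The main step is then the squarefree-values theorem for polynomials under $abc$ (Granville): for $R\in\mathbb Z[y]$ without repeated factors, the density of $y$ with $R(y)$ squarefree is $\prod_p(1-\omega(p)/p^2)$. We must check two things. First, that $R_f$ has no repeated irreducible factors. Here $f$ and $\mathcal N_f$ are separable in $b$, being compositions with linear maps of $f$, whose discriminant is nonzero since $f$ is irreducible. They are also coprime, since $m^{-m}(\cdot-\epsilon)$ cannot map roots to roots generically. This needs an argument, and I expect it to be the main obstacle; I would use $f$ irreducible together with a comparison of degrees and leading behavior, or absorb any common factor by a separate count. Second, that $\lambda>0$, i.e.\ $\omega(p)<p^2$ for all $p$. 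For large $p>2n$ this is automatic, since $\omega(p)\le 2n\cdot(\text{bounded})<p^2$ by Hensel, excluding finitely many primes dividing the discriminant, which can be handled via $\omega(p)\le\deg R_f\, p$. For $p\le 2n$ with $p\nmid qm$, the hypothesis $p^2\nmid R_f(0)$ gives $\omega(p)<p^2$. For $p=q$, the prime-to-$q$ property gives $\omega(q)=0$. For $p\mid m$, $\mathcal N_f(qy)\equiv\epsilon^n\not\equiv0\pmod p$, using $\gcd(a,m)=1$ and $p\nmid m-1$; also $f(qy)$ has at most $n$ roots mod $p$, so again $\omega(p)<p^2$ after a Hensel count. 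Applying Granville's theorem over the range $y\le Y/q$ yields the stated bound $(\lambda+o(1))Y/q$.
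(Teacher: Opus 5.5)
Your overall route matches the paper's: Corollary~\ref{T2.1} with $t=1$, the identity $m^{mn}f\bigl(h_b(-a(m-1)/m)\bigr)=\mathcal N_f(b)$, exact $q$-divisibility of both factors of $R_f$, and Granville's theorem. The identity you leave as ``should give'' is one line: for $u=-a(m-1)/m$ one has $u+a=a/m$, so $h_b(u)=b+u^{m-1}\cdot a/m=b+\epsilon/m^m$. There are, however, two genuine gaps.

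First, the step you flag as the main obstacle is left unproved, and it cannot be bypassed by a separate count. If $f(qy)/q$ and $\mathcal N_f(qy)/q$ had a common irreducible factor, then $R_f$ would have a square factor. In that case $R_f(y)$ is squarefree for only finitely many $y$, Granville's theorem does not apply, and the constant $\lambda$ in the statement degenerates to $0$. The paper closes this step as follows. A common root means $\theta_j-\theta_i=\epsilon/m^m$ for roots $\theta_i,\theta_j$ of the monic $f$. So $\epsilon/m^m$ is a rational algebraic integer, hence lies in $\Z$, which is impossible since $\gcd(\epsilon,m)=1$ and $m\ge 3$. Your hinted route also works and needs only $\epsilon\neq 0$. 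The polynomials $f(b)$ and $f(b+\epsilon/m^m)=m^{-mn}\mathcal N_f(b)$ are monic and irreducible over $\Q$, so a common root forces them to be equal. Comparing coefficients of $b^{n-1}$ then gives $n\epsilon/m^m=0$, a contradiction.

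Second, your local argument at primes $p\mid m$ only works for $p>n$. When $p\le n$, the fact that $f(qy)/q$ has at most $n$ roots mod $p$ need not produce a non-root, and a Hensel count alone cannot rule out $\omega(p)=p^2$. For example, $f=x^4+6x^3+9x^2+12$ is $3$-Eisenstein with $f(z)\equiv z^2(z+1)^2\equiv 0\pmod 4$ for every $z$. Taking $q=3$, $m=4$, $a=3$ gives $\omega(2)=4$, although $p=2$ is exempt from the hypothesis on $R_f(0)$. (This $f$ is of course not monogenic, and that is the point.) What rescues the theorem is the monogenicity of $f$, which you never use at these primes. Since $\mathcal N_f(qy)/q\equiv \epsilon^n/q\not\equiv 0\pmod p$ and $q$ is invertible mod $p^2$, $\omega(p)=p^2$ would force $p^2\mid f(z)$ for all $z$. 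Then $f(p)-f(0)\equiv pf'(0)\pmod{p^2}$ gives $p\mid f'(0)$. Hence $f=f(0)+f'(0)x+x^2Q(x)\in\langle p,x\rangle^2$, and $p$ divides the index of $f$ by Theorem~\ref{valuation dede}, a contradiction. The paper's proof has the same hole: its conclusion that a prime dividing all values of $R_f$ satisfies $p\nmid m$ is derived only under $p>2n$. The same fix applies there.
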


\begin{remark}
Note that, in the above theorem, the polynomial $\mathcal{F}_b(x)$ obtained after the composition is again monic, $q$-Eisenstein, and monogenic. Therefore, the same construction can be applied repeatedly. More precisely, starting with $\mathcal{F}_{b_0}(x)=f(x)$, one can construct a sequence of integers $b_k$ such that $\mathcal{F}_{b_k}(x)=\mathcal{F}_{b_{k-1}}\bigl(h_{b_k}(x)\bigr)$
and each polynomial $\mathcal{F}_{b_k}(x)$ remains monic, $q$-Eisenstein, and monogenic. This provides a recursive construction of monogenic polynomials of arbitrarily large degree and, consequently, of monogenic number fields of arbitrarily large degree.
\end{remark}

 \noindent \textbf{The structure of the paper is as follows.} This paper is organized into six sections, including the introduction. In Section~\ref{sec:disc_of_comp}, we present some preliminary results and compute the discriminant of polynomial compositions; see Theorem~\ref{Discriminant}. Section~\ref{sec:monogenity of composition} is devoted to results over valuation rings. We establish several results in a general setting and prove Theorem~\ref{VT2.1}. In Section~\ref{sec:proof over rationals}, we prove Theorems~\ref{thm:2.1}, \ref{thm:2.6}, \ref{thm:2.8}, and \ref{thm:2.9}. In Section~\ref{sec:analytic}, we prove Theorem~\ref{thm:2.9}, which establishes a lower bound for the number of pairs of polynomials $f(x)$ and $g(x)$ for which both $f(x)$ and $f(g(x))$ are monogenic. Finally, Section~\ref{sec:examples} contains examples illustrating our results.

\section{Discriminant of Composition of polynomials} \label{sec:disc_of_comp}
We begin this section with two lemmas that will be used in the proof of Theorem~\ref{Discriminant}. Let $\theta$ be a root of an irreducible monic polynomial $f(x)\in F[x]$. The following lemma relates the discriminant of $f(x)$ to the norm of $f'(\theta)$.
\begin{lemma} \cite[Proposition 12.9]{Morandi96}\label{norm-compute} Let $f(x) \in F[x]$ be monic and irreducible polynomial with $\deg(f)=n.$ Let $K=F(\alpha)$ be a field extension of $F$ where $\alpha$ is a root of $f(x)$. Then
	$$ \D_f=(-1)^{\binom{n}{2}}\,\mathcal{N}_{K/F} (f'(\alpha)).$$
\end{lemma}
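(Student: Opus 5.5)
This is a cited result (Morandi, Prop. 12.9), but I will sketch how I would prove it directly. The plan is to write both sides in terms of the roots of $f$ in a splitting field. Since $f$ is irreducible, I will also assume it is separable; otherwise both sides vanish, because $\D_f=0$ and $f'(\alpha)=0$, so that case is harmless.

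Let $\alpha=\alpha_1,\dots,\alpha_n$ be the roots of $f$ in an algebraic closure. The $F$-embeddings $\sigma_i$ of $K=F(\alpha)$ are the maps determined by $\alpha\mapsto\alpha_i$. For $f'(\alpha)\in K$ we then have the standard formula
\[
\mathcal{N}_{K/F}(f'(\alpha))=\prod_{i=1}^n \sigma_i(f'(\alpha))=\prod_{i=1}^n f'(\alpha_i).
\]
In the inseparable case I would interpret the norm as the determinant of multiplication by $f'(\alpha)$. That determinant equals $\prod_i f'(\alpha_i)$ counted with multiplicity, and it is zero.

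Next I differentiate $f(x)=\prod_j(x-\alpha_j)$ by the product rule and evaluate at $\alpha_i$. Every term containing the factor $(x-\alpha_i)$ vanishes, leaving
\[
f'(\alpha_i)=\prod_{j\neq i}(\alpha_i-\alpha_j).
\]
Taking the product over $i$ gives $\prod_{i\neq j}(\alpha_i-\alpha_j)$, an ordered product over all pairs.

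Each unordered pair $\{i,j\}$ contributes $(\alpha_i-\alpha_j)(\alpha_j-\alpha_i)=-(\alpha_i-\alpha_j)^2$. There are $\binom n2$ such pairs, so
\[
\prod_{i\neq j}(\alpha_i-\alpha_j)=(-1)^{\binom n2}\prod_{i<j}(\alpha_i-\alpha_j)^2=(-1)^{\binom n2}\D_f.
\]
Multiplying by $(-1)^{\binom n2}$ then gives the claim.

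The only step needing any care is the identification of the norm with the product of the conjugates. It follows because the characteristic polynomial of multiplication by $g(\alpha)$ on the basis $1,\alpha,\dots,\alpha^{n-1}$ is $\prod_i(x-g(\alpha_i))$, since $f$ is the minimal polynomial of $\alpha$. Everything else is bookkeeping of signs.
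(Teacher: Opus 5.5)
Your argument is correct and is the standard proof of this result: write $\mathcal{N}_{K/F}(f'(\alpha))=\prod_i f'(\alpha_i)$, use $f'(\alpha_i)=\prod_{j\neq i}(\alpha_i-\alpha_j)$, and count the $\binom{n}{2}$ sign changes. The paper gives no proof of its own and only cites Morandi, and your observation that both sides vanish for an inseparable irreducible $f$ (where $f'=0$) correctly covers the only edge case.
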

Let $K$ be a finite extension of a field $F$. For any element $\alpha \in K$, computing the norm of $\alpha$ with respect to $K/F$ is closely related to computing the norm of $\alpha$ with respect to $F(\alpha)/F$, as given by the following formula.

\begin{lemma}\cite[Theorem~1.20]{SKM}\label {norm in extension} Let $K/F$ be an extension of degree $n$ and $\alpha$  be an element of $K$ with $[F(\alpha):F]=d$. Then $$ \mathcal{N}_{K/F}(\alpha)=\left( \mathcal{N}_{F(\alpha)/F}(\alpha)\right)^{n/d}. $$
\end{lemma}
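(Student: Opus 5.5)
Since $F(\alpha)\subseteq K$, the plan is to compute the norm in two stages, first down to $F(\alpha)$ and then down to $F$, using the tower $F\subseteq F(\alpha)\subseteq K$ and transitivity of norms:
\[
\mathcal{N}_{K/F}=\mathcal{N}_{F(\alpha)/F}\circ \mathcal{N}_{K/F(\alpha)}.
\]
Write $E=F(\alpha)$, so that $[E:F]=d$ and, by the tower law, $[K:E]=n/d$.

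\textbf{Step 1: the inner norm.} Since $\alpha\in E$, multiplication by $\alpha$ on $K$, viewed as an $E$-vector space of dimension $n/d$, is the scalar matrix $\alpha I_{n/d}$. Its determinant is $\alpha^{n/d}$, so
\[
\mathcal{N}_{K/E}(\alpha)=\alpha^{n/d}.
\]

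\textbf{Step 2: the outer norm.} Apply $\mathcal{N}_{E/F}$ to this and use that the norm is multiplicative, which gives $\mathcal{N}_{K/F}(\alpha)=\left(\mathcal{N}_{E/F}(\alpha)\right)^{n/d}$, as claimed.

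\textbf{Main obstacle: transitivity of the norm.} The only nontrivial input is $\mathcal{N}_{K/F}=\mathcal{N}_{E/F}\circ\mathcal{N}_{K/E}$, and I would avoid needing it in full generality. Choose an $E$-basis $\{\beta_1,\dots,\beta_r\}$ of $K$, with $r=n/d$, and an $F$-basis $\{1,\alpha,\dots,\alpha^{d-1}\}$ of $E$. The products $\alpha^i\beta_j$ then form an $F$-basis of $K$.

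Multiplication by $\alpha$ preserves each $F$-subspace $E\beta_j$. On each such subspace it acts, with respect to the basis $\{\alpha^i\beta_j\}_i$, by the same matrix $A$, namely the companion matrix of the minimal polynomial of $\alpha$. So in the combined basis the matrix of multiplication by $\alpha$ on $K$ over $F$ is block diagonal with $r$ copies of $A$. Its determinant is $(\det A)^{r}=\mathcal{N}_{E/F}(\alpha)^{n/d}$, which proves the lemma directly and bypasses transitivity altogether.
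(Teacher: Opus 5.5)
Your proof is correct. The paper gives no proof of this lemma; it only cites it from [SKM, Theorem~1.20], so there is nothing in the paper to match against, and your block-diagonal computation serves as a complete, self-contained proof.

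Take the norm to be the determinant of the $F$-linear map $x\mapsto\alpha x$. The $F$-basis $\{\alpha^i\beta_j\}$ splits $K$ into the $r=n/d$ invariant subspaces $E\beta_j$. On each of these, $\alpha$ acts by the companion matrix $A$ of its minimal polynomial. By the same definition applied to $E$ with basis $1,\alpha,\dots,\alpha^{d-1}$, we have $\det A=\mathcal{N}_{F(\alpha)/F}(\alpha)$.

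Equivalently, you have shown that the characteristic polynomial of $\alpha$ relative to $K/F$ is the $(n/d)$-th power of its minimal polynomial, which is the usual textbook route. Comparing constant terms then gives the lemma, and the signs agree because $(-1)^{d\cdot n/d}=(-1)^n$.

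Your direct argument is better than the two obvious alternatives. The transitivity route in your Steps~1--2 depends on $\mathcal{N}_{K/F}=\mathcal{N}_{E/F}\circ\mathcal{N}_{K/E}$, which is a harder theorem. An embedding-based proof, in its simplest form, assumes $K/F$ is separable. Your argument uses only linear algebra and works for every finite extension, separable or not. That is the right level of generality here, because the lemma is used in Theorem~\ref{Discriminant}, which is stated over an arbitrary field $F$ with no separability hypothesis. Once the direct argument is written out, you can drop Steps~1 and~2.
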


\begin{proof}[Proof of Theorem~\ref{Discriminant}]
Let $\theta$ be a root of $f\circ h$ and $K=F(\theta)$. Using Lemma~\ref{norm-compute}, we have
\begin{align*}
    \D_{f\circ h } &= (-1)^{nm(nm-1)/2} \, \mathcal{N}_{K/F}\left((f(h(\theta)))'\right) \\
    &= (-1)^{nm(nm-1)/2} \, \mathcal{N}_{K/F} \left(f'(h(\theta))\right) \cdot \mathcal{N}_{K/F} \left(h'(\theta)\right) \quad \textit{(since norm is multiplicative)}.
\end{align*}
Now, applying Lemma~\ref{norm-compute} to $f(x)$ and substituting $h'(\theta) = \theta^{k-1}(m\theta^t + ak)$ into the expression above, we obtain
\begin{equation*}
    \D_{f\circ h } =(-1)^{\frac{nm(nm-1)+nm(n-1)}{2}}\, \D_f^{m} \, \mathcal{N}_{K/F}(\theta^{k-1}(m\theta^t + ak)).
\end{equation*}
Using multiplicative property of Norm, we have
\begin{equation}\label{disc f.h}
    \D_{f\circ h } =(-1)^{\frac{nm(nm+n-2)}{2}} \,\D_f^m \, \mathcal{N}_{K/F}(\theta)^{k-1}\,\mathcal{N}_{K/F}(m\theta^t + ak).
\end{equation}
Since the minimal polynomial of $\theta$ is $f(h(x))$, it follows that
\begin{equation} \label{norm of theta}
    \mathcal{N}_{K/F}(\theta) = (-1)^{nm} \, f(b).
\end{equation}
Next, we compute $\mathcal{N}_{K/F}(m\theta^t + ak)$. Let us denote $z = m\theta^t + ak$ and  $f(x) = x^n + a_{n-1}x^{n-1} + \cdots + a_1x + a_0$. Since $f(h(\theta)) = 0$, we substitute $\theta^t = \frac{z - ak}{m}$ to obtain
\begin{equation}
    f\left(\left(\frac{z - ak}{m}\right)^{m_1} + a\left(\frac{z - ak}{m}\right)^{k_1} + b\right) = \left(\left(\frac{z - ak}{m}\right)^{m_1} + a\left(\frac{z - ak}{m}\right)^{k_1} + b\right)^n + \cdots + a_0 = 0.
\end{equation}
Therefore, we see that $z$ is a root of the polynomial
\begin{equation}\label{minimal poly for z}
    h_1(x) = \left(G_1(x)\right)^n + a_{n-1} m^{m_1} \left(G_1(x)\right)^{n-1} + \cdots + a_0 m^{m_1n},
\end{equation}
where $G_1(x) = (x - ak)^{m_1} + am(x - ak)^{k_1} + bm^{m_1}$.
We now show that $h_1(x)$ is the minimal polynomial of $z$ over $F$.
 It is easy to verify that $F(z)=F(\theta^{t})$. Hence, to establish that the degree of the minimal polynomial of $z$ over $F$ is $m_1n$, it is sufficient to prove that $[F(\theta^t):F]=m_1n$. Now define a polynomial 
\[g(x)=[x^{m_1}+ax^{k_1}+b]^n+a_{n-1}[x^{m_1}+ax^{k_1}+b]^{n-1}+\cdots+ a_1[x^{m_1}+ax^{k_1}+b]+a_0\in F[x].\]
We will show that $g(x)$ is minimal polynomial for $\theta^t$, i.e., $g(x)$ is irreducible and $g(\theta^t)=0$. We have
\[ g(\theta^t)=[(\theta^t)^{m_1}+a(\theta^t)^{k_1}+b]^n+a_{n-1}[(\theta^t)^{m_1}+a(\theta^t)^{k_1}+b]^{n-1}+\cdots+ a_0=f(h(\theta))=0. \]
  Now suppose $g(x)$ is reducible say $g(x)=g_1(x)\, g_2(x)\in F[x]$ with degree $\deg(g_1),~ \deg (g_2) \geq 2$. Clearly $g(x^t)=f(h(x))$. Using this we obtain that $f(h(x))=g_1(x^t)\cdot g_2(x^t)$. This contradicts the irreducibility of $f(h(x)).$ Hence, $g(x)$ is irreducible. This proves that $h_1(x)$ is minimal polynomial for $z$. Thus, 
  \[\mathcal{N}_{K(z)/F}=(-1)^{m_1n}h_1(0).\] Using Lemma \ref{norm in extension}, we get
  \[\mathcal{N}_{K/F}(m\theta^t+ak)=\left((-1)^{m_1n}h_1(0)\right)^t.\]
Substituting value of $h_1(0)$ in above equation, we have
\begin{equation}\label{norm of z}
    \mathcal{N}_{K/F}\,(m\theta^t+ak)=(-1)^{\,mn}\,m^{\,mn}\,f\left(\left(\frac{- ak}{m}\right)^{m_1} + a\left(\frac{- ak}{m}\right)^{k_1} + b\right)^t.
\end{equation}
Therefore, the theorem follows from Lemma~\ref{norm-compute}, together with Equations~\eqref{disc f.h}, \eqref{norm of theta} and~\eqref{norm of z}.
\end{proof}

\section{Index of composed polynomial} \label{sec:monogenity of composition}
Let $p$ be a prime number and $\overline{f}(x) = \overline{g}_1(x)^{e_1}\cdots\overline{g}_r(x)^{e_r}$ be the factorization of the polynomial $\overline{F}(x)$ obtained on replacing each coefficient of $f(x)$ modulo $p$ into a product of powers of distinct irreducible polynomials over $\mathbb{Z}/p\mathbb{Z}$ with $g_i(x)$ belonging to $\mathbb{Z}[x]$ monic. In 1878, Dedekind proved that if $p$ does not divide $[\bA_K:\mathbb{Z}[\theta]]$, then $p\bA_K=p_1^{e_1} \cdots p_r^{e_r},$ where $p_1,\ldots,p_r$ are distinct prime ideals of $\bA_K$,
$p_i=p\bA_K+g_i(\theta) \bA_K$, with norm of $p_i$ equal to
$p^{\deg g_i}$ for $1\leq i\leq r$. Dedekind also gave a simple criterion, known as Dedekind Criterion, to verify when the condition $p\nmid[\bA_K:\mathbb{Z}[\theta]]$ is satisfied and proved that this holds if and only if for each $i$, either $e_i=1$ or $\overline{g}_i(x)$ does not divide $\overline{H}(x)$, where $H(x)=\frac{1}{p}\left(f(x)-g_1(x)^{e_1}\cdots g_r(x)^{e_r}\right)$ (cf. \cite[Theorem 6.1.4]{HC}). If $\mathbb{Z}_{(p)}$ denotes the localization of $\mathbb{Z}$ at the prime ideal $p\mathbb{Z}$, then using Lagrange's Theorem and Cauchy's Theorem for finite groups, one can quickly verify that the condition $p \nmid[\bA_K:\mathbb{Z}[\theta]]$ is equivalent to $\bA_K\subseteq\mathbb{Z}_{(p)}[\theta]$, which is the same as requiring that the ring $\mathbb{Z}_{(p)}[\theta]$ is integrally closed. With this interpretation, Dedekind Criterion was extended in a natural way by Khanduja and Kumar~\cite{MR2598906} replacing $\mathbb{Z}_{(p)}$ by any discrete valuation ring. In 2006, Ershov proved an analogous result for general valuation rings (cf. \cite{MR2455373, MR2598906}).

In order to state the generalized Dedekind Criterion proved by Ershov, we first need few definitions and notation. So, we now briefly recall the notions of Krull valuations and valuation rings that will be used throughout this section. We also fix the notation adopted in the sequel.

\begin{definition}
Let $K$ be a field and let $\Gamma$ be a totally ordered abelian group, written additively. A \emph{Krull valuation} on $K$ is a surjective map
\( v:K^\times\longrightarrow\Gamma \)
satisfying
\begin{enumerate}
\item [\textup{(1)}] $v(xy)=v(x)+v(y)$;
\item [\textup{(2)}] $v(x+y)\ge \min\{v(x),v(y)\}$ whenever $x+y\neq0$.
\end{enumerate}
The valuation is extended to all of $K$ by setting $v(0)=\infty$, where $\infty$ is a symbol satisfying $\gamma<\infty$ and $\gamma+\infty=\infty$ for every $\gamma\in\Gamma$. The ordered abelian group $\Gamma$ is called the \emph{value group} of $v$.
\end{definition}
\noindent Associated with $v$ is the valuation ring $R_v=\{x\in K:\,v(x)\ge0\},$ whose unique maximal ideal is $M_v=\{x\in K:\,v(x)>0\}.$ The field of fractions of $R_v$ is $K$.

\medskip

\noindent
\textbf{Notation.}
Throughout this section, $R$ is an integral domain (with unity) and $F$ is the field of fraction of $R$. $v$ denotes a Krull valuation of arbitrary rank on a field $F$, with valuation ring $R_v$, maximal ideal $M_v$, value group $\Gamma_v$ and residue field \( k_v=R_v/M_v. \) For an element $\xi \in R_v$, we write $\overline{\xi}$ for its residue class in $k_v$. If $g(x)=\sum_{i=0}^{n}a_i x^i\in R_v[x],$ then $\overline g(x)=\sum_{i=0}^{n}\overline a_i x^i\in k_v[x]$ denotes the polynomial obtained by reducing each coefficient of $g(x)$ modulo $M_v$.

Now we state the result of Ershov \cite{MR2455373} famously known as Generalized Dedekind criterion (GDC) for Krull valuation of arbittrary rank proved in \cite{MR2455373}.

\begin{theorem}[Generalized Dedekind Criterion (GDC)]
Let $v$, $R_v, M_v$ defined as above. Let $G(x)\in R_v[x]$ be a monic irreducible polynomial having a root
$\theta$ in its splitting field, and suppose that
\( \overline{G}(x) = \overline{g}_1(x)^{e_1} \cdots \overline{g}_t(x)^{e_t} \) is the factorization of $\overline{G}(x)$ into powers of distinct irreducible polynomials over $R_v/M_v$, where each $g_i(x)\in R_v[x]$ is monic. Then $R_v[\theta]$ is integrally closed if and only if one of the following holds:

\begin{enumerate}
\item[\textup{(i)}]
$e_i=1$ for every $1\le i\le t$;

\item[\textup{(ii)}]
There exists an index $1\le j\le t$ such that $e_j>1$.
In this case, $M_v$ is a principal ideal, say
$M_v=(\pi)$ and
$\overline{g}_j(x)$ does not divide
$\overline{Y}(x)$, where
\[ Y(x) = \frac{1}{\pi} \Bigl( G(x)-g_1(x)^{e_1}\cdots g_t(x)^{e_t} \Bigr) \in R_v[x]. \]
\end{enumerate}
\end{theorem}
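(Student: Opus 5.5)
The plan is to work locally at the maximal ideals of $A:=R_v[\theta]$ and compare $A$ with its integral closure $S$ in $L=F(\theta)$. Reducing modulo $M_v$ gives $A/M_vA\cong k_v[x]/(\overline G(x))$. Since $A$ is integral over $R_v$, every maximal ideal of $A$ contracts to $M_v$. Hence the maximal ideals of $A$ are exactly $\mathfrak m_i=M_vA+g_i(\theta)A$ for $1\le i\le t$.

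By the extension theory of Krull valuations, $S$ is the intersection of the finitely many valuation rings $V_1,\dots,V_s$ of $L$ extending $v$. In particular $S$ is a semilocal Bézout (Prüfer) domain. Consequently $A$ is integrally closed if and only if each localization $A_{\mathfrak m_i}$ is a valuation ring, namely the one among the $V_l$ dominating it. So the criterion reduces to deciding, for each $i$ separately, whether $A_{\mathfrak m_i}$ is a valuation ring.

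\emph{The case $e_i=1$.} Pass to the henselization $(R^h,v^h)$, which has the same value group and residue field. Since $\overline g_i$ is coprime to the other factors of $\overline G$, Hensel's lemma gives a factorization $G=G_iH_i$ over $R^h$ with $\overline G_i=\overline g_i$ irreducible. The factor $G_i$ defines an extension $L_i$ of $F^h$ of degree $\deg g_i$ whose residue field contains $k_v[x]/(\overline g_i)$. By the fundamental inequality this extension is unramified with residue degree $\deg g_i$. It follows that $A_{\mathfrak m_i}$ is local with maximal ideal $M_vA_{\mathfrak m_i}$. A local domain of this kind, with residue field $k_v[x]/(\overline g_i)$ and dominated by the valuation ring of $L_i\cap L$, coincides with that valuation ring, by a Nakayama-type comparison of $A_{\mathfrak m_i}$ with $V_l$ modulo $M_v$. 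Thus all primes with $e_i=1$ cause no obstruction, and this already settles (i).

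\emph{The case $e_j>1$.} First suppose $A$ is integrally closed, so $A_{\mathfrak m_j}$ is a valuation ring $V$, and suppose for contradiction that $M_v$ is not principal. Then $\Gamma_v$ has no least positive element. Write $w$ for the valuation of $V$, and note $w(g_j(\theta))>0$ and $M_vV\subseteq \mathfrak m_jV$. Choose $c\in M_v$ with $0<w(c)<w(g_j(\theta))$. This is possible because $\Gamma_v$ is cofinal in $w(L^\times)$ with finite index, and positive elements of $\Gamma_v$ can be taken arbitrarily small. Then $g_j(\theta)/c$ lies in $V$ and is integral over $R_v$ with respect to the dominating valuations. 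However, it does not lie in $A=\bigoplus_{r<\deg G} R_v\theta^r$: its coordinates in this basis involve $1/c$, and $\overline g_j\ne 0$ has degree less than $\deg G$ because $e_j>1$. Hence $M_v=(\pi)$ is principal.

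With $M_v=(\pi)$, I would run the classical Dedekind computation. Put
\[
z=\frac{1}{\pi}\Bigl(\prod_i g_i(\theta)^{e_i}\Big/g_j(\theta)\Bigr).
\]
The element $z$ is not in $A$ by the same coordinate argument. From $G(\theta)=0$ one gets $\prod_i g_i(\theta)^{e_i}=-\pi Y(\theta)$. Computing $z^{e_j}$ and its multiples then shows that $z$ is integral over $R_v$ if and only if $\overline g_j\mid\overline Y$. Conversely, if $\overline g_j\nmid\overline Y$, then $g_j(\theta)$ generates $\mathfrak m_j A_{\mathfrak m_j}$, because $\pi\in g_j(\theta)^{e_j}A_{\mathfrak m_j}$. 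A local domain whose maximal ideal is principal and which satisfies $\bigcap_n \mathfrak m_j^n=\mathfrak m_j^{\,n_0}$-type finiteness over the rank-arbitrary $R_v$ is a valuation ring. This step needs care in arbitrary rank: I would prove it by showing that every element of $L$ can be written as $u\,g_j(\theta)^r\pi^{\gamma}$-style products inside $V$, which uses that $V/\pi V$ is a DVR quotient $k_v[x]/(\overline g_j^{e_j})$.

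\emph{Main obstacle.} I expect the arbitrary-rank issues to be the main difficulty. These are showing that $A_{\mathfrak m_i}$ is actually a valuation ring in the sufficiency directions, and the non-principal case of necessity. In both, the cofinality and divisibility properties of $\Gamma_v$ must replace the usual discrete-valuation arguments.
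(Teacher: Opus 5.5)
The paper does not prove this statement; it quotes it from Ershov, so your proposal has to stand on its own. Much of your framework is sound:
\begin{itemize}
\item the maximal ideals of $A$ are $\mathfrak m_i=M_vA+g_i(\theta)A$;
\item $S=\bigcap_l V_l$ is Prüfer, so $A$ is integrally closed iff every $A_{\mathfrak m_i}$ is a valuation ring;
\item the element $z=u(\theta)/\pi$ with $u=\prod_i g_i^{e_i}/g_j$ gives necessity of $\overline g_j\nmid\overline Y$, for instance by the determinant trick applied to $zI\subseteq I$ with $I=(\pi,g_j(\theta))$.
\end{itemize}
The genuine gap is sufficiency when $M_v=(\pi)$ but $v$ has rank $>1$, e.g.\ $\Gamma_v=\mathbb Z^2$ ordered lexicographically. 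This case is exactly what separates Ershov's theorem from the DVR version. You correctly obtain $\mathfrak m_jB=g_j(\theta)B$ for $B=A_{\mathfrak m_j}$. However, a non-Noetherian local domain with principal maximal ideal need not be a valuation ring: the ring $\{f\in\mathbb Q[x,y]_{(x,y)}: f(0,0)\in\mathbb Z_{(p)}\}$ has maximal ideal generated by $p$ and is not one. Your proposed remedy only sees $B$ modulo powers of $\pi$, i.e.\ values in $\mathbb Z v(\pi)$, and ``$\pi^\gamma$'' has no meaning outside that subgroup. It says nothing about $P=\bigcap_n g_j(\theta)^nB$, which contains $\mathfrak q=\bigcap_n\pi^nR_v\neq0$ as soon as the rank exceeds one.

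The missing argument runs as follows.
\begin{itemize}
\item $P$ is prime with $PB_P=P$, $B/P$ is a DVR, and $B$ is the preimage of $B/P$ in $B_P$. Hence $B$ is a valuation ring iff $B_P=B[1/\pi]$ is.
\item $B_P$ is the localization of $R_v[1/\pi][\theta]=(R_v)_{\mathfrak q}[\theta]$ at the prime belonging to an irreducible factor $P_1$ of $G \bmod \mathfrak q$ over $\kappa(\mathfrak q)$. Here $P_1$ is monic over the DVR $R_v/\mathfrak q$ and $\overline g_j\mid\overline P_1$.
\item The hypothesis $\overline g_j\nmid\overline Y$, i.e.\ $G\notin\langle\pi,g_j\rangle^2$, forces $P_1$ to be a simple factor. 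Otherwise $G\in\langle\pi,g_j\rangle^2+\mathfrak qR_v[x]=\langle\pi,g_j\rangle^2$, because $\mathfrak q\subseteq\pi^2R_v$.
\item The simple-factor case over the coarsening $(R_v)_{\mathfrak q}$, whose maximal ideal is typically not principal, then finishes.
\end{itemize}
Without this reduction, your sketch proves the criterion only for DVRs.

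Two smaller steps are also wrong as written.

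\emph{Non-principal necessity.} The element $g_j(\theta)/c$ is not integral over $R_v$ when $t\ge2$. For an extension $V_l$ centred at $\mathfrak m_i$ with $i\ne j$, $g_j(\theta)$ is a unit of $V_l$, so $g_j(\theta)/c$ has value $-v(c)<0$. Moreover, showing $g_j(\theta)/c\notin A$ does not contradict $A_{\mathfrak m_j}=V$. Show instead that $g_j(\theta)/c\notin A_{\mathfrak m_j}$: if $s\,g_j(\theta)\in cA$ with $s=s(\theta)\notin\mathfrak m_j$, then $\overline G\mid\overline s\,\overline g_j$, hence $\overline g_j\mid\overline s$ because $e_j\ge2$, a contradiction. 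Alternatively, use $u(\theta)/c$ with $v(c)$ chosen below every $w_l(u(\theta))$.

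\emph{The case $e_i=1$.} The ``Nakayama-type comparison'' needs $V_l$ to be finitely generated over $A_{\mathfrak m_i}$, which is essentially what is being proved; integral closures over non-discrete valuation rings need not be finite. When $L/F$ and $\overline g_i$ are separable, you can use Euler's lemma $S\subseteq G'(\theta)^{-1}A$ together with $G'(\theta)\notin\mathfrak m_i$ to get $S_{\mathfrak m_i}=A_{\mathfrak m_i}$ directly. In general, you need an actual descent argument from the fact that $R^h[\theta_i]$ is the valuation ring of $L_i$.
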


In 1977, K. Uchida~\cite{Uch77} established another equivalent criterion for $R[\theta]$ to be integrally closed for a Dedekind domain $R$. This result was later generalized to Pr\"{u}fer domains by Khanduja and Jhorar~\cite{Jh-Kh}. We state their result below.

\begin{theorem}\label{valuation dede}
Let $v$, $R_v$, $M_v$, $\theta$, $G(x)$ and $g_i(x)$ be as in the notations. Then $R_v[\theta]$ is integrally closed if and only if $G(x) \notin \langle M_v, g_i(x)\rangle ^2 \  \text{in } R_v[x] \  \text{for any } i,\ 1 \le i \le t.$
\end{theorem}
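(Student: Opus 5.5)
The plan is to derive the statement from the Generalized Dedekind Criterion (GDC) stated above. For each index $i$, I will characterize the condition $G(x)\in\langle M_v,g_i(x)\rangle^2$ in terms of the data appearing in the GDC. Throughout, I read condition (ii) of the GDC in its intended form: $M_v$ is principal, and $\overline g_j\nmid\overline Y$ for every $j$ with $e_j>1$. The proof then splits according to whether $M_v$ is principal.

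\emph{Case 1: $M_v$ is not principal.} In a valuation ring this means the value group $\Gamma_v$ has no least positive element. So for every $\xi\in M_v$ there is $\eta\in M_v$ with $0<v(\eta)<v(\xi)$, and then $\xi=\eta\cdot(\xi/\eta)$ with both factors in $M_v$. Hence $M_v^2=M_v$, and therefore
\[
\langle M_v,g_i\rangle^2=\langle M_v^2,\,M_vg_i,\,g_i^2\rangle=\langle M_v,g_i^2\rangle .
\]
Consequently $G\in\langle M_v,g_i\rangle^2$ if and only if $\overline g_i^{\,2}\mid\overline G$, that is, if and only if $e_i\ge 2$. So the condition of the theorem holds exactly when all $e_i=1$. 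This matches the GDC, since alternative (ii) is unavailable when $M_v$ is not principal.

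\emph{Case 2: $M_v=(\pi)$.} Here $\langle\pi,g_i\rangle^2=\langle \pi^2,\pi g_i,g_i^2\rangle$. Write $G=\prod_j g_j^{e_j}+\pi Y$, as in the GDC.
\begin{itemize}
\item If $e_i=1$, then membership of $G$ would force $\overline g_i^{\,2}\mid\overline G$, which is impossible. So $G\notin\langle\pi,g_i\rangle^2$.
\item If $e_i\ge2$, then $\prod_j g_j^{e_j}\in(g_i^2)$, so $G$ lies in the ideal if and only if $\pi Y$ does. Suppose $\pi Y=\pi^2A+\pi g_iB+g_i^2C$. Reducing modulo $\pi$ gives $\overline g_i^{\,2}\,\overline C=0$, so $C=\pi C'$ because $k_v[x]$ is a domain. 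Cancelling $\pi$ (we are in a domain) gives $Y\in\langle \pi,g_i\rangle$, i.e.\ $\overline g_i\mid\overline Y$. The converse direction is immediate.
\end{itemize}
Thus, in this case, $G\notin\langle\pi,g_i\rangle^2$ for every $i$ exactly when $\overline g_i\nmid\overline Y$ for each $i$ with $e_i>1$. This is precisely the GDC condition, and it includes alternative (i) when no $e_i$ exceeds $1$.

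Combining the two cases with the GDC yields the theorem. One must also check that the ideal $\langle M_v,g_i\rangle$ does not depend on the choice of monic lift $g_i$ of $\overline g_i$; this holds because two lifts differ by an element of $M_v[x]$. I expect the main obstacle to be Case 1, namely establishing $M_v=M_v^2$ for non-principal $M_v$ in arbitrary rank. The cancellation step in Case 2 is routine.
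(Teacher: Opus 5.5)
Your proof is correct, and it takes a different route from the paper because the paper gives no proof of this statement: it quotes the result from Uchida (Dedekind domains) and Khanduja--Jhorar (Pr\"ufer domains), and displays only Ershov's criterion beforehand.

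You derive the local statement directly from that displayed criterion. The only ingredient you need beyond it is that a non-principal maximal ideal of a valuation ring is idempotent, $M_v^2=M_v$, because $v(M_v\setminus\{0\})$ has no least element. This explains why the ideal-theoretic formulation needs no principality hypothesis: in that case $\langle M_v,g_i\rangle^2=\langle M_v,g_i^2\rangle$, and the condition collapses to alternative (i). In the principal case, your reduce-modulo-$\pi$-and-cancel computation is the same manipulation the paper uses in the proofs of Theorem~\ref{Thm:4.1} and Lemma~\ref{lem:5.14}.

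You were right to read alternative (ii) as requiring $\overline{g}_j\nmid\overline{Y}$ for \emph{every} $j$ with $e_j>1$. Under the literal \emph{there exists an index $j$} wording, a polynomial with two repeated factors, only one of which divides $\overline{Y}$, would contradict the statement you are proving. The degenerate case $\pi=0$ (trivial valuation) never reaches your cancellation step, since then $\overline{G}=G$ is irreducible over $k_v=F$ and every $e_i=1$.

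The citation gives the global version over Pr\"ufer domains. Your argument instead gives a short, self-contained reduction to Ershov's theorem that makes clear why the two criteria are equivalent.
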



\begin{remark}
Many results on monogenic extensions are established under the assumption that the underlying valuation ring is a Dedekind\footnote{An integral domain $R$ where the localization $R_v$ is a discrete valuation ring (DVR) for valuation $v$ is called a Dedekind or Noetherian Prüfer domain.}. In contrast, our results are formulated for Krull valuations of arbitrary rank, allowing the valuation ring to be non-Noetherian. The only additional hypothesis needed in our arguments is local in nature: whenever the polynomial under consideration has a repeated root modulo $M_v$ and the discriminant lies in $M_v$, we assume that the maximal ideal $M_v$ is principal. This condition is sufficient for the discriminant arguments developed later and extends the classical Dedekind domain framework to a broader class of Pr\"{u}fer domains.
\end{remark}

The following proposition gives a standard characterization of discrete valuation rings. The equivalence of conditions $(1)$, $(2)$, and $(3)$ is a classical result (see, Bourbaki~\cite[Proposition 9, p.392]{MR979760}). The equivalence of $(1)$ and $(4)$ follows from the fact that the only convex subgroups of $(\mathbb{Z},+)$ are $\mathbb{Z}$ and ${0}$. We first recall a standard characterization of discrete valuation rings, which will be useful in the sequel.

\begin{proposition}\label{prop:equivalences}
Let $v$ be a non-trivial Krull valuation of a field $F$ with valuation ring $R_v$. Then the following statements are equivalent.
\begin{enumerate}
\item $R_v$ is a discrete valuation ring.
\item $R_v$ is Noetherian.
\item $R_v$ is a principal ideal domain.
\item The valuation $v$ has rank one and its value group is isomorphic to $\mathbb Z$.
\end{enumerate}
Moreover, whenever any of these conditions hold, the maximal ideal $M_v$ is principal.
\end{proposition}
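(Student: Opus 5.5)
The plan is to prove the cycle $(1)\Rightarrow(2)\Rightarrow(3)\Rightarrow(1)$ from the definitions, and then to attach $(4)$ to $(1)$ through the value group. Here a discrete valuation ring means a valuation ring whose value group is $\mathbb Z$ (rank one, discrete), so the most delicate direction, $(2)\Rightarrow(1)$, has to go through the ideal structure of $R_v$. The basic tool is that the ideals of a valuation ring are totally ordered by inclusion. Indeed, if $x\notin I$ and $y\in I$, then $x/y\notin R_v$, since otherwise $x=(x/y)y\in I$. Hence $y/x\in M_v$, so $y\in xR_v$. In particular, every finitely generated ideal is generated by an element of least valuation among its generators, and is therefore principal.

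\textbf{$(1)\Rightarrow(2)\Rightarrow(3)$.} If $\Gamma_v\cong\mathbb Z$, choose $\pi$ with $v(\pi)=1$. Every nonzero $x\in R_v$ can be written $x=u\pi^{v(x)}$ with $u$ a unit. Take a nonzero ideal $I$ and let $n$ be the least value of $v$ on $I\setminus\{0\}$; then $I=(\pi^n)$. So $R_v$ is a PID, and in particular Noetherian. For $(2)\Rightarrow(3)$: in a Noetherian ring every ideal is finitely generated, and by the observation above every finitely generated ideal in a valuation ring is principal.

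\textbf{$(3)\Rightarrow(1)$ and $(4)$.} Since $v$ is non-trivial, $M_v\neq 0$. As a PID, $M_v=(\pi)$ for some $\pi$. I claim that $v(\pi)$ is the least positive element of $\Gamma_v$. If $0<v(y)<v(\pi)$, then $y\in M_v$ but $y\notin(\pi)$, since $y/\pi$ would have negative value, a contradiction.

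Next I show $\Gamma_v=\mathbb Z\, v(\pi)$. Suppose $\gamma=v(x)>0$ with $\gamma>nv(\pi)$ for all $n$. Then $x\in\bigcap_n(\pi^n)$. This intersection is a prime ideal: if $ab$ lies in it with $v(a),v(b)$ both finite multiples-bounded, then $v(ab)$ is bounded too, a contradiction. It is also nonzero, since it contains $x$. The PID $R_v$ then has a nonzero prime ideal $P=(p)$ strictly inside $(\pi)$. But $p=\pi\cdot(p/\pi)$, and $\pi\notin P$ because $v(\pi)$ is not larger than every multiple of $v(\pi)$. Primality therefore forces $p/\pi\in P$, so $p/\pi=pr$ for some $r\in R_v$, giving $\pi r=1$. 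This contradicts $\pi$ being a nonunit.

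So every positive $\gamma$ satisfies $n v(\pi)\le\gamma<(n+1)v(\pi)$ for some $n$. Then $\gamma-nv(\pi)$ lies in $[0,v(\pi))$, and minimality of $v(\pi)$ gives $\gamma=nv(\pi)$. Hence $\Gamma_v\cong\mathbb Z$, which is $(1)$. It is also $(4)$, because $\mathbb Z$ is archimedean and its only convex subgroups are $0$ and $\mathbb Z$, so the rank is one. Conversely, $(4)$ is literally the condition $\Gamma_v\cong\mathbb Z$, which gives $(1)$.

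The "moreover" clause follows from $(3)$: $M_v$ is an ideal of a PID, hence principal, and explicitly $M_v=(\pi)$ with $v(\pi)=1$.

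The main obstacle is excluding non-archimedean value groups such as $\mathbb Z\times\mathbb Z$ with the lexicographic order, where $M_v$ can be principal without $R_v$ being Noetherian. The argument handles this through the nonzero prime $\bigcap_n(\pi^n)$, showing that in the PID case it cannot contain a generator. An alternative route would be to cite Bourbaki as the paper does.
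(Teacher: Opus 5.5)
Your proof is correct, and it does more than the paper does. The paper gives no argument for $(1)\Leftrightarrow(2)\Leftrightarrow(3)$; it only cites Bourbaki. It handles $(1)\Leftrightarrow(4)$ by remarking that the only convex subgroups of $\mathbb Z$ are $0$ and $\mathbb Z$, which is exactly the step you reproduce.

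Your route is self-contained. For $(2)\Rightarrow(3)$ you use the total ordering of ideals. For $(3)\Rightarrow(1)$ you show that $v(\pi)$ is the least positive value, and then exclude ``infinitely large'' values using the nonzero ideal $P=\bigcap_n(\pi^n)$. This makes visible exactly where nontriviality and the archimedean property enter, which the citation hides.

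Your lexicographic $\mathbb Z\times\mathbb Z$ example also shows that the ``moreover'' clause cannot be reversed. That matters here, since the paper later assumes only $M_v=\langle\pi\rangle$ for valuations of arbitrary rank.

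Two small presentation points:
\begin{enumerate}
\item The sentence proving that $P$ is prime is garbled. It should say that $1\notin P$, and that if $a,b\notin P$ then $v(a)<n_1v(\pi)$ and $v(b)<n_2v(\pi)$ for some $n_1,n_2$. Hence $v(ab)<(n_1+n_2)v(\pi)$, so $ab\notin P$.
\item Primality is not actually needed. Write $P=(p)$ and $p=\pi q$. Then $v(q)=v(p)-v(\pi)\ge nv(\pi)$ for every $n$, so $q\in P$ directly. Hence $q=pr$, and cancelling $p\neq 0$ gives $\pi r=1$, the same contradiction.
\end{enumerate}
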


We first record a useful observation concerning a change of generator. It shows that, under a suitable condition on $k$, the integral closedness of $R_v[\theta]$ is equivalent to that of $R_v[k\theta+a]$.

\begin{proposition} \label{pro3.7}
Let $f(x) \in R_v[x]$ be a monic irreducible polynomial of degree $n$, and let $K = F(\theta)$ where $\theta$ is a root of $f(x)$. Suppose $k, a \in R_v$ are elements such that $v(k) = 0$. Then, $R_v[\theta]$ is integrally closed if and only if $R_v[k\theta + a]$ is integrally closed.
\end{proposition}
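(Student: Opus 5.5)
The plan is to prove that the two rings $R_v[\theta]$ and $R_v[k\theta+a]$ coincide. Integral closedness is a property of the ring itself, so the equivalence then follows immediately. Put $\eta = k\theta + a$.

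\textbf{Inclusion $R_v[\eta]\subseteq R_v[\theta]$.} Since $k,a\in R_v$, the element $\eta$ is a polynomial in $\theta$ with coefficients in $R_v$. Hence $R_v[\eta]\subseteq R_v[\theta]$.

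\textbf{Reverse inclusion.} The hypothesis $v(k)=0$ says that $k$ is a unit of $R_v$, so $k^{-1}\in R_v$. Then
\[
\theta = k^{-1}(\eta - a) \in R_v[\eta],
\]
which gives $R_v[\theta]\subseteq R_v[\eta]$. The two rings are therefore equal as subrings of $K$, and the statement follows.

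\textbf{Why the setup makes sense.} Both rings are subrings of the same field. Indeed, $F(\eta)=F(\theta)=K$, since each of $\theta$ and $\eta$ is an $F$-linear expression in the other; here $k\neq 0$ because $v(k)=0$. The minimal polynomial of $\eta$ is
\[
k^{n} f\bigl(k^{-1}(x-a)\bigr).
\]
It is monic with coefficients in $R_v$, because $k^{-1}\in R_v$. It is irreducible of degree $n$, being obtained from $f$ by an invertible affine substitution and rescaling by the unit $k^n$. So $\eta$ is integral over $R_v$, and the statement is posed in the same setting as for $\theta$.

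\textbf{Alternative via the criteria.} One could instead verify the result through the GDC or Theorem~\ref{valuation dede}. Reduction modulo $M_v$ commutes with the affine change of variables $x\mapsto k^{-1}(x-a)$, which is an automorphism of $R_v[x]$ preserving $M_v R_v[x]$. This automorphism carries the factorization of $\overline f$ to that of $\overline{k^n f(k^{-1}(x-a))}$ and carries $\langle M_v, g_i\rangle^2$ to the corresponding ideal. That route is unnecessary, however, since the rings are literally equal.

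There is no real obstacle in this argument. The only point needing care is that $v(k)=0$ is exactly what makes $k^{-1}\in R_v$; without it, the reverse inclusion fails.
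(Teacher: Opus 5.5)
Your proposal is correct and follows essentially the same approach as the paper: both show $R_v[k\theta+a]=R_v[\theta]$ via the two inclusions, using $k^{-1}\in R_v$ (from $v(k)=0$) to recover $\theta=k^{-1}(\eta-a)$. Your extra remarks on the minimal polynomial $k^{n}f\bigl(k^{-1}(x-a)\bigr)$ and the GDC alternative are accurate but not needed.
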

\begin{proof}
Set $\alpha=k\theta+a$. Since $k,a\in R_v$, we have $\alpha\in R_v[\theta]$. Hence $R_v[\alpha]\subseteq R_v[\theta]$. Moreover, $v(k)=0$ implies that $k$ is a unit of  $R_v$. Thus $k^{-1}\in R_v$. From $\alpha=k\theta+a$, we obtain $\theta=k^{-1}(\alpha-a)\in R_v[\alpha]$. Consequently, $R_v[\theta]\subseteq R_v[\alpha]$. Therefore, $R_v[\theta]=R_v[\alpha]=R_v[k\theta+a]$.

Hence, the two rings are identical and so $R_v[\theta]$ is integrally closed if and only if $R_v[k\theta+a]$ is integrally closed. This completes the proof.
\end{proof}

We now prove a simple ideal theoretic identity, which will be used  in the sequel.

\begin{proposition} \label{lem:ideal_split}
Let $M_v=\langle\pi\rangle$ be a maximal ideal in $R_v$ and $g(x) \in R_v[x]$ be a monic polynomial that is irreducible modulo $M_v$. Then  $\langle M_v, g(x) \rangle^2 = \langle M_v^2, g(x) \rangle \cap \langle M_v, g(x)^2 \rangle.$
\end{proposition}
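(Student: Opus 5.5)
The inclusion $\langle M_v, g(x)\rangle^2 \subseteq \langle M_v^2, g(x)\rangle \cap \langle M_v, g(x)^2\rangle$ is immediate, so the real work is the reverse inclusion. I would argue it with division by the monic polynomial $g$. Write $I=\langle M_v,g\rangle=\langle \pi, g\rangle$, so that $I^2=\langle \pi^2,\pi g, g^2\rangle$. Each generator $\pi^2,\pi g,g^2$ lies in both $\langle \pi^2, g\rangle$ and $\langle \pi, g^2\rangle$, which gives the easy inclusion.

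For the reverse inclusion, take $h(x)\in \langle \pi^2,g\rangle\cap\langle \pi,g^2\rangle$. Since $g$ is monic, the $g$-adic expansion is available in $R_v[x]$. Dividing $h$ by $g^2$ gives
\[
h = q\,g^2 + r_1 g + r_0, \qquad \deg r_0,\ \deg r_1<\deg g,
\]
with $r_0,r_1\in R_v[x]$, and this expansion is unique. Because $g^2\in I^2$, it suffices to show that $\pi^2\mid r_0$ and $\pi\mid r_1$. In that case $r_0\in\langle\pi^2\rangle\subseteq I^2$ and $r_1g\in\langle \pi g\rangle\subseteq I^2$, so $h\in I^2$.

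\emph{From $h\in\langle \pi,g^2\rangle$.} Write $h=\pi u+g^2w$. Expanding $u$ $g$-adically as $u=u_2g^2+u_1g+u_0$ and using uniqueness of the expansion, we get $r_1=\pi u_1$ and $r_0=\pi u_0$. So $\pi\mid r_1$ and $\pi\mid r_0$.

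\emph{From $h\in\langle\pi^2,g\rangle$.} Write $h=\pi^2 s+g\,y$. Expanding $s=s_1g+s_0$ with $\deg s_0<\deg g$, uniqueness of the remainder mod $g$ gives $r_0=\pi^2 s_0$. So $\pi^2\mid r_0$.

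Combining the two, $h\in I^2$. Uniqueness of the expansion relies only on $g$ being monic: the remainder upon division by a monic polynomial over any commutative ring is unique. It also uses that multiplication by $\pi$ commutes with division by $g$: if $u=Qg+u_0$, then $\pi u=(\pi Q)g+\pi u_0$ with $\deg(\pi u_0)<\deg g$.

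The main point to be careful about is this uniqueness bookkeeping rather than any valuation theory. Interestingly, irreducibility of $\bar g$ modulo $M_v$ does not seem to be needed; only monicity of $g$ and principality $M_v=\langle\pi\rangle$ are used. I would state this explicitly, noting that the hypothesis is harmless but not required.
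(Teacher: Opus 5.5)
Your proof is correct, but it is organized differently from the paper's.

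The paper starts from $r=\pi^2a+gb=\pi c+g^2d$ and rewrites this as $\pi(c-\pi a)=g(b-gd)$. It then cancels $\pi$ modulo $g$ to get $c=\pi a+gt$, and substitutes back to obtain $r=\pi^2a+\pi gt+g^2d\in\langle\pi,g\rangle^2$. That is shorter. However, the paper justifies the cancellation only by $\pi\notin\langle g\rangle$, and that alone is not sufficient. What is actually needed is that $\pi$ is a nonzerodivisor on $R_v[x]/\langle g\rangle$. This holds because $g$ is monic and $R_v$ is a domain, so the quotient is a free $R_v$-module on $1,x,\dots,x^{\deg g-1}$.

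Your route avoids this cancellation entirely. You compare the unique $g$-adic digits $r_1,r_0$ of $h$ with the digits forced by the two given representations. The only input is uniqueness of quotient and remainder for division by a monic polynomial. So your argument never divides by $\pi$, and it works verbatim over any commutative ring, for any element $\pi$ and any monic $g$. This also confirms your remark that irreducibility of $\overline{g}$ modulo $M_v$ plays no role here; the paper's argument, once its cancellation step is justified as above, does not need it either.
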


\begin{proof}
The containment $\langle M_v, g(x) \rangle^2 \subseteq \langle M_v^2, g(x) \rangle \cap \langle M_v, g(x)^2 \rangle$ follows from the definition of the product of ideals. For the reverse inclusion, let $r(x)$ be a polynomial belonging to the intersection $\langle M_v^2, g(x) \rangle \cap \langle M_v, g(x)^2 \rangle$. Then there exist polynomials $a(x), b(x), c(x), d(x) \in R_v[x]$ such that
\begin{equation} \label{eq:lem_ident}
\pi^2 a(x) + g(x) b(x) = r(x) = \pi c(x) + g^2(x) d(x).
\end{equation}
By above relation, we obtain that $\pi(c(x)-\pi a(x))\in\langle g(x)\rangle$. As $\pi\not\in\langle g(x) \rangle$, implies that $c(x)\equiv\pi a(x) \pmod{g(x)}$. Write $c(x)=\pi a(x)+g(x)t(x)$ for some $t(x)\in R_v[x]$. Now substituting this value in second relation of \eqref{eq:lem_ident}, we obtain 
\[r(x)=\pi(\pi a(x)+g(x)t(x))+g^2(x)d(x).\]
Thus, $r(x) \in \langle M_v, g(x) \rangle^2$, which completes the proof.
\end{proof}

\begin{theorem}\label{Thm:4.1}
 Let $f(x)$, $h(x)$ and $g(x)$ be monic polynomials in $R_v[x]$ such that $g(x)$ is irreducible modulo $M_v=\langle\pi \rangle$. Suppose that $f(x) \in \langle \pi,\, g(x)^2 \rangle$. Then $f(x) \in \langle \pi,\,g(x) \rangle^2$ if and only if $f(h(x)) \in \langle \pi,\,g(h(x)) \rangle^2$.
\end{theorem}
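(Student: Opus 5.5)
The plan is to reduce both memberships to statements in the residue ring $k_v[x]$ and compare them through the substitution $x\mapsto h(x)$. Throughout I will assume $\deg h\ge 1$, so that $\overline{g}(\overline{h}(x))$ is nonconstant; for a monic $h$ of degree $0$, i.e. $h=1$, the statement is degenerate.

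\emph{The easy direction.} Suppose $f(x)\in\langle \pi, g(x)\rangle^2$. Write
\[
f(x)=\pi^2A(x)+\pi g(x)B(x)+g(x)^2C(x).
\]
Substituting $x\mapsto h(x)$ gives $f(h(x))\in\langle \pi, g(h(x))\rangle^2$ at once.

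\emph{Reducing the left-hand condition to a residue condition.} Since $f\in\langle\pi,g^2\rangle$, write $f=\pi c+g^2d$ with $c,d\in R_v[x]$. By Proposition~\ref{lem:ideal_split},
\[
\langle\pi,g\rangle^2=\langle\pi^2,g\rangle\cap\langle\pi,g^2\rangle,
\]
so $f\in\langle\pi,g\rangle^2$ if and only if $f\in\langle \pi^2,g\rangle$, i.e. if and only if $\pi c\in\langle\pi^2,g\rangle$. I claim this holds if and only if $\overline g\mid\overline c$ in $k_v[x]$.
\begin{itemize}
\item If $\overline g\mid \overline c$, then $c\in\langle \pi,g\rangle$, hence $\pi c\in\langle\pi^2,g\rangle$.
\item Conversely, suppose $\pi c=\pi^2a+gb$. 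Reducing mod $\pi$ gives $\overline g\,\overline b=0$. Since $g$ is monic, $\overline g\neq0$, and $k_v[x]$ is a domain, so $\overline b=0$ and $b=\pi b'$. Cancelling $\pi$ in the domain $R_v[x]$ yields $c=\pi a+gb'$, so $\overline g\mid\overline c$.
\end{itemize}

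\emph{The converse direction.} Assume $f(h)\in\langle\pi,g(h)\rangle^2$, but suppose for contradiction that $f\notin\langle\pi,g\rangle^2$, so $\overline g\nmid\overline c$. Note that $g(h)$ is monic but need not be irreducible mod $\pi$, so Proposition~\ref{lem:ideal_split} cannot be applied to it. Instead I argue directly. Write
\[
\pi c(h)+g(h)^2d(h)=\pi^2A+\pi g(h)B+g(h)^2C .
\]
Reducing mod $\pi$ gives $\overline{g(h)}^{\,2}\,\overline{(C-d(h))}=0$. Since $g(h)$ is monic, $C-d(h)=\pi E$ for some $E$. Cancelling $\pi$ then gives
\[
c(h)=\pi A+g(h)B+g(h)^2E,
\]
so $\overline g(\overline h)\mid \overline c(\overline h)$ in $k_v[x]$.

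Now use the irreducibility of $\overline g$. Because $\overline g$ is irreducible and $\overline g\nmid\overline c$, there are $u,w\in k_v[x]$ with $u\overline c+w\overline g=1$. Substituting $\overline h$ gives
\[
u(\overline h)\,\overline c(\overline h)+w(\overline h)\,\overline g(\overline h)=1 .
\]
Hence $\overline g(\overline h)$ divides $1$. But $\overline g(\overline h)$ is monic of degree $\deg g\cdot\deg h\ge1$, which is a contradiction.

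The main obstacle is exactly this converse direction, where Proposition~\ref{lem:ideal_split} is unavailable for $g(h)$; the Bezout identity in $k_v[x]$, transported along $x\mapsto \overline h(x)$, is what handles it.
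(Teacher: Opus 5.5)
Your proof is correct. Its skeleton matches the paper's: the forward direction is substitution, and the converse reduces modulo $\pi$, uses monicity of $g(h)$ to extract and cancel a factor $\pi$, and arrives at a divisibility statement in $k_v[x]$. The two arguments differ in how they exploit that divisibility.

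The paper first normalizes, writing $f=g^2q+\pi g s+\pi t$ with $\deg t<\deg g$, so that both memberships reduce to those of $\pi t$ and $\pi t(h)$. Then $\overline{g}(\overline{h})\mid\overline{t}(\overline{h})$ together with $\deg t(h)<\deg g(h)$ forces $\overline{t}(\overline{h})=0$, and hence $\overline{t}=0$.

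You instead keep an unnormalized $c$. You identify $f\in\langle\pi,g\rangle^2$ with $\overline{g}\mid\overline{c}$ via Proposition~\ref{lem:ideal_split}. You then rule out $\overline{g}(\overline{h})\mid\overline{c}(\overline{h})$ when $\overline{g}\nmid\overline{c}$ by transporting a B\'ezout identity along the ring endomorphism $x\mapsto\overline{h}(x)$ of $k_v[x]$.

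Your route makes the residue criterion explicit, which is conceptually clean. The price is that it genuinely uses irreducibility of $\overline{g}$. The paper's degree comparison uses only that $g$ and $h$ are monic with $\deg h\ge 1$, so its argument in fact proves the statement for every monic $g$. You could recover that generality by replacing the B\'ezout step with division of $\overline{c}$ by $\overline{g}$ followed by the same degree count.

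Note also that the cancellation you perform in the converse, run with $h(x)=x$, already gives the criterion that $f\in\langle\pi,g\rangle^2$ if and only if $\overline{g}\mid\overline{c}$. So the appeal to Proposition~\ref{lem:ideal_split} is not actually needed.
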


\begin{proof}
By hypothesis, $f(x) \in \langle \pi,\, g(x)^2 \rangle$. Since $g(x)$ is monic, we can use the division algorithm in $R_v[x]$ to write  $f(x) = g(x)^2 q(x) + \pi r(x)$ for some $q(x), r(x) \in R_v[x]$.  
Applying the division algorithm again to divide $r(x)$ by $g(x)$, we have  
\[
    r(x) = g(x) s(x) + t(x),
\]
where $\deg t(x) < \deg g(x)$.  
Substituting this expression for $r(x)$ back into our first equation yields  
\begin{equation}\label{eq3.1}
  f(x) = g(x)^2 q(x) + \pi\, g(x) s(x) + \pi \,t(x).  
\end{equation}
Notice that the first two terms on the right-hand side belong to $\langle \pi,\, g(x) \rangle^2$. Therefore, by above equation we have 
\begin{equation} \label{eq3.3}
    f(x) \in \langle \pi,~g(x) \rangle^2 \iff \pi \, t(x) \in \langle \pi,~g(x) \rangle^2.
\end{equation}
Now, replacing $x$ with the polynomial $h(x)$ in \eqref{eq3.1}, we obtain
\begin{equation*}
    f(h(x)) = g(h(x))^2 \, q(h(x)) + \pi \, g(h(x)) \, s(h(x)) + \pi\, t(h(x)).
\end{equation*} 
By the same reasoning as for $f(x)$, 
we deduce
\begin{equation}\label{eq3.4}
    f(h(x)) \in \langle \pi,\,g(h(x)) \rangle^2 \iff \pi\,t(h(x)) \in \langle \pi,\,g(h(x)) \rangle^2.
\end{equation} 
In view of \eqref{eq3.3} and \eqref{eq3.4}, the theorem reduces to showing that
\begin{equation}\label{eq3.5}
    \pi \, t(x) \in \langle \pi,~g(x) \rangle^2 \iff \pi \, t(h(x)) \in \langle \pi,~g(h(x)) \rangle^2.
\end{equation} 
If $\pi \, t(x) \in \langle \pi,\,g(x) \rangle^2$, then substituting $h(x)$  gives $\pi\, t(h(x)) \in \langle \pi,\,g(h(x)) \rangle^2$. To prove the converse, assume that $\pi \, t(h(x)) \in \langle \pi,~g(h(x)) \rangle^2$. This means there exist polynomials, $h_1(x), h_2(x), h_3(x) \in R_v[x]$ such that
\begin{equation}\label{eqns:3.6}
    \pi\,t(h(x)) = \pi^2\, h_1(x) + \pi\,g(h(x)) \, h_2(x) + g(h(x))^2\, h_3(x).
\end{equation}
Reducing this equation modulo $M_v$, the terms multiplied by $\pi$ vanish, leaving
\[
    g(h(x))^2\, h_3(x) \equiv 0 \pmod{M_v}.
\]
Since $g(x)$ and $h(x)$ are monic, the composition $g(h(x))$ is also monic and thus non-zero modulo $M_v$. Since $(R_v/M_v)[x]$ is an integral domain, it follows that $h_3(x) \equiv 0 \pmod{M_v}$. We can therefore write $h_3(x) = \pi h_4(x)$ for some $h_4(x) \in R_v[x]$. Substituting $h_3(x) = \pi h_4(x)$ into \eqref{eqns:3.6} and dividing throughout by $\pi$, we obtain
\begin{align*}
  t(h(x)) &= \pi\, h_1(x) + g(h(x)) \, h_2(x) + g(h(x))^2\, h_4(x),\\  
  &= \pi\, h_1(x) + g(h(x)) \big[h_2(x) + g(h(x))\, h_4(x)\big].
\end{align*}
Reducing this equation modulo $M_v$ yields
\[
    t(h(x)) \equiv g(h(x)) \big[h_2(x) + g(h(x))\, h_4(x)\big] \pmod{M_v}.
\]
Suppose, for the sake of contradiction, that $h_2(x) + g(h(x))\, h_4(x) \not\equiv 0 \pmod{M_v}$ and for $j \geq 0$, let $b_jx^j$ be its leading non-zero term modulo $M_v$. Then using $g(h(x))$ is monic, right-hand side of the congruence contains a term $b_j x^{\deg(g\circ h)+j}$ which is non-zero modulo $M_v$. But we have $\deg(t) < \deg(g)$ and hence $\deg(t \circ h) < \deg(g \circ h)$, this means that every non-zero term modulo $M_v$ on left-hand side has degree less than $\deg(g \circ h)$, which is a contradiction.
Thus, we have $ t(h(x)) \equiv 0 \pmod{M_v}.$
Finally, we show that this forces $t(x) \equiv 0 \pmod{M_v}$. Let $t(x) = a_d x^d + a_{d-1} x^{d-1} + \cdots + a_0$. Evaluating at $h(x)$ gives
\[
    t(h(x)) = a_d h(x)^d + a_{d-1} h(x)^{d-1} + \cdots + a_0 \equiv 0 \pmod{M_v}.
\]
Since $h(x)$ is monic, the leading term of this expansion is exactly $a_d x^{d \cdot \deg(h)}$. For $t(h(x))$ to be the zero polynomial modulo $M_v$, its leading coefficient must be zero, so $a_d \equiv 0 \pmod{M_v}$. Continuing this process inductively for the remaining coefficients, we see that every coefficient of $t(x)$ must be zero modulo $M_v$. 
Thus, $t(x) \equiv 0 \pmod{M_v}$, which means $t(x) = \pi t_1(x)$ for some polynomial $t_1(x) \in R_v[x]$. Multiplying by $\pi$, we obtain $\pi t(x) = \pi^2 t_1(x)$, .i.e., $\pi t(x)$ lies in the ideal $\langle \pi \rangle^2 \subseteq \langle \pi, g(x) \rangle^2$. This proves the reverse implication of \eqref{eq3.5} which completes the proof of the theorem.
\end{proof}

\begin{corollary}\label{Cor:4.2}
 Let $f(x)$ and $g(x)$ be monic polynomials in $R_v[x]$ such that $g(x)$ is irreducible modulo $M_v=\langle\pi \rangle$. Suppose that $f(x) \in \langle \pi,\, g(x)^2 \rangle$ and $p$ is a prime such that $v(p)>0$. Then $f(x) \in \langle \pi,\,g(x) \rangle^2 $ if and only if $f(x^p) \in \langle \pi,\,g(x) \rangle^2.$
\end{corollary}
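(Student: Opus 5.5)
The plan is to run the argument of Theorem~\ref{Thm:4.1} with $h(x)=x^p$, but to test membership in the fixed ideal $\langle \pi, g(x)\rangle^2$ rather than in $\langle \pi, g(x^p)\rangle^2$. The one extra input is the behaviour of reduction modulo $M_v$ under $x\mapsto x^p$. Since $v(p)>0$, the residue field $k_v$ has characteristic $p$. I will use the Frobenius identity
\[
\overline{u}(x^p)=\overline{u}(x)^p \qquad \text{for } u(x)\in R_v[x] \text{ whose reduced coefficients satisfy } c^p=c.
\]
This holds, for instance, when $k_v=\mathbb{F}_p$, which is the situation in all the applications over $\mathbb{Q}$. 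I expect this to be the main obstacle: the statement keeps the ideal $\langle\pi,g(x)\rangle^2$ on both sides, so the proof must control $\overline{g}(x^p)$ and $\overline{t}(x^p)$ in terms of $\overline{g}(x)$. For a general residue field I would first try to reduce to this case, or else state it explicitly as the hypothesis under which the corollary is applied.

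First, exactly as in the proof of Theorem~\ref{Thm:4.1}, I use division by the monic $g$ to write
\[
f(x)=g(x)^2q(x)+\pi g(x)s(x)+\pi t(x), \qquad \deg t<\deg g.
\]
As shown there, $f\in\langle\pi,g\rangle^2$ if and only if $\pi t\in\langle\pi,g\rangle^2$. I then sharpen this to the statement that $\pi t\in\langle \pi,g\rangle^2$ if and only if $\overline{t}=0$. The backward direction is trivial. For the forward direction, write $\pi t=\pi^2h_1+\pi g h_2+g^2h_3$; reducing shows $\overline{h_3}=0$, and dividing by $\pi$ gives $\overline{g}\mid\overline{t}$, which forces $\overline t=0$ because $\deg t<\deg g$.

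Next I substitute $x^p$:
\[
f(x^p)=g(x^p)^2q(x^p)+\pi g(x^p)s(x^p)+\pi t(x^p).
\]
By the Frobenius identity, $\overline{g}(x^p)=\overline{g}(x)^p$, so $g(x^p)\in\langle \pi, g(x)\rangle$. Consequently the first two terms lie in $\langle\pi,g(x)\rangle^2$. It follows that $f(x^p)\in\langle\pi,g\rangle^2$ if and only if $\pi t(x^p)\in\langle\pi,g\rangle^2$.

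Repeating the reduction argument above, with $t$ replaced by $t(x^p)$, the membership $\pi t(x^p)\in\langle\pi,g\rangle^2$ is equivalent to $\overline{g}\mid \overline{t}(x^p)$. Here the degree bound is no longer available. Instead, by Frobenius, $\overline{t}(x^p)=\overline{t}(x)^p$, and since $\overline g$ is irreducible, $\overline g\mid \overline t^{\,p}$ if and only if $\overline g\mid\overline t$. Combined with $\deg t<\deg g$, this holds if and only if $\overline t=0$. Both memberships are therefore equivalent to $\overline t=0$, which proves the corollary. As a consistency check, the hypothesis $f\in\langle\pi,g^2\rangle$ is used only to produce the decomposition of $f$, just as in Theorem~\ref{Thm:4.1}.
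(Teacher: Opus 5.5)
Your argument is correct under the hypothesis you isolate. The fallback plan of reducing a general residue field to that case cannot succeed, however, because without such a hypothesis the corollary is false. Take $R_v$ to be $\mathbb{Z}[\zeta_3]$ localized at the prime $2\mathbb{Z}[\zeta_3]$ (here $2$ is inert). Then $\pi=2$, $k_v=\mathbb{F}_4=\mathbb{F}_2(\omega)$ with $\omega=\overline{\zeta_3}$, and $p=2$. Let $g(x)=x-\zeta_3$ and $f(x)=g(x)^2$. Then $f\in\langle\pi,g^2\rangle$ and $f\in\langle\pi,g\rangle^2$. However, $\overline{f}(x^2)=(x^2-\omega)^2$ takes the value $(\omega^2-\omega)^2=1$ at $x=\omega$, so $f(x^2)\notin\langle\pi,g\rangle$, let alone $\langle\pi,g\rangle^2$.

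So your second option is the right one: add a hypothesis such as $\overline{g}\in\mathbb{F}_p[x]$. This is automatic when $k_v=\mathbb{F}_p$, i.e.\ for $R_v=\mathbb{Z}_{(p)}$, which is the only case used over $\mathbb{Q}$. The paper's proof makes the same assumption silently when it writes $g(x^p)=g(x)^p+p\,t(x)$. With $k_v=\mathbb{F}_p$ both of your uses of Frobenius hold automatically. If you assume only $\overline{g}\in\mathbb{F}_p[x]$, the step for the remainder $t$ needs a small change. For a root $\beta$ of $\overline{g}$, the element $\beta^p$ is again a root, so $\overline{g}\mid\overline{t}(x^p)$ gives $\overline{t}(\beta^p)=0$. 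Irreducibility then yields $\overline{g}\mid\overline{t}$, and the degree bound forces $\overline{t}=0$.

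Your route also differs from the paper's. The paper applies Theorem~\ref{Thm:4.1} with $h(x)=x^p$ to get that $f\in\langle\pi,g\rangle^2$ if and only if $f(x^p)\in\langle\pi,g(x^p)\rangle^2$. It then compares the two ideals using $g(x^p)\equiv g(x)^p$. The inclusion $\langle\pi,g(x^p)\rangle^2\subseteq\langle\pi,g\rangle^2$ gives one direction. The other direction, which the paper dismisses as ``similar manipulations'', is not an ideal inclusion: $\pi g\in\langle\pi,g\rangle^2$, while $\pi g\notin\langle\pi,g(x)^p\rangle^2=\langle\pi,g(x^p)\rangle^2$. That direction needs the specific shape of $f(x^p)$. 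Your remainder computation supplies exactly this, by showing that both memberships are equivalent to $\overline{t}=0$. Your argument is therefore self-contained and fills in the step the paper only sketches. The paper's route gains only the reuse of Theorem~\ref{Thm:4.1} as a black box.
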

\begin{proof}
    Applying Theorem \ref{Thm:4.1} with $h(x)=x^p$ to the polynomial $f(x)$, we have
    \begin{equation*}
        f(x) \in \langle \pi,\,g(x) \rangle^2 \iff f(x^p) \in \langle \pi,\,g(x^p) \rangle^2.
    \end{equation*}
    We wish to point out that the irreducibiliy of $g(x)$ was only required for the above equivalence. The statement proved below follows for any polynomial $g(x)$. To prove the corollary, we only need to show that $f(x^p) \in \langle \pi,\,g(x^p) \rangle^2 \iff f(x^p) \in \langle \pi,\,g(x) \rangle^2$. Let $t(x)$ be the polynomial such that $g(x^p) = g(x)^p + p t(x)$.

    \noindent Now assume, $f(x^p) \in \langle \pi,\,g(x^p) \rangle^2 $, i.e., there exists polynomials $h_1(x), h_2(x), h_3(x) \in \Z[x]$ such that
    \begin{align*}
        f(x^p) &= \pi^2h_1(x) + \pi g(x^p) h_2(x) + g(x^p)^2 h_3(x), \\
        &= \pi^2 h_1(x) + \pi g(x)^ph_2(x) + \pi p t(x) + \big[ g(x)^{2p} + p g(x)^p \times 2 + p^2 t(x)^2 \big] h_3(x), \\
        &= \pi^2 h_4(x) + \pi g(x) h_5(x) + g(x)^2 h_6(x)
    \end{align*}
    for some $h_4(x), h_5(x), h_6(x) \in \Z[x]$, where the last relation is clubbed using the fact $v(p)>0$. This implies $f(x^p) \in \langle \pi,\,g(x) \rangle^2$. Using similar manipulations, we obtain that other implication. This completes the proof of the corollary.
\end{proof}



\begin{lemma} \label{lem:5.14}
Let $v$ be a Krull valuation on a field $F$ with valuation ring $R_v$ and its maximal ideal is principal, say $M_v=(\pi)$. Suppose that $\overline{g}(y)$ is an irreducible repeated factor of $\overline{f}(y)$ in $k_v[y]$ and  $\overline{g}_1(x)$ be an irreducible factor of $\overline{g}(\overline{h}(x))$. If \( f(h(x)) \notin \langle \pi,g(h(x)) \rangle^2, \) then \( f(h(x)) \notin \langle \pi,g_1(x) \rangle^2. \)
\end{lemma}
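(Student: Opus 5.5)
We argue by contraposition at the level of the residual polynomial: the hypothesis $f(h(x))\notin\langle \pi,g(h(x))\rangle^2$ forces a certain remainder $t(x)$ to be nonzero modulo $M_v$. Membership $f(h(x))\in\langle\pi,g_1(x)\rangle^2$ would instead force $\overline{g}_1(x)$ to divide $\overline{t}(\overline{h}(x))$, and coprimality of $\overline{t}$ and $\overline{g}$ rules this out.

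\emph{Decomposition of $f$.} Since $\overline{g}(y)$ is a repeated irreducible factor of $\overline{f}(y)$, we have $f(y)\in\langle \pi, g(y)^2\rangle$. As in the proof of Theorem~\ref{Thm:4.1}, divide by the monic $g$ twice to write
\[
f(y)=g(y)^2q(y)+\pi\, g(y)s(y)+\pi\, t(y),\qquad \deg t<\deg g .
\]
By \eqref{eq3.4} and the argument establishing the reverse implication of \eqref{eq3.5}, $f(h(x))\in\langle\pi,g(h(x))\rangle^2$ holds if and only if $\overline{t}=0$. Conversely, if $\overline{t}=0$ then $\pi t(h)\in\langle\pi\rangle^2$. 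So our hypothesis gives $\overline{t}\neq 0$. Because $\deg\overline{t}<\deg\overline{g}$ and $\overline{g}$ is irreducible, $\overline{t}$ and $\overline{g}$ are coprime in $k_v[y]$. Hence there are $u,w\in k_v[y]$ with $u\overline{t}+w\overline{g}=1$.

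\emph{Reduction to $\pi t(h(x))$.} Substitute $y=h(x)$. Since $\overline{g}_1\mid\overline{g}(\overline{h})$, we have $g(h(x))\in\langle\pi,g_1(x)\rangle$. Therefore $g(h)^2q(h)$ and $\pi g(h)s(h)$ lie in $\langle\pi,g_1\rangle^2$. It follows that $f(h)\in\langle\pi,g_1\rangle^2$ if and only if $\pi t(h)\in\langle\pi,g_1\rangle^2$.

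\emph{Excluding $\pi t(h(x))\in\langle\pi,g_1\rangle^2$.} Suppose, for contradiction, that
\[
\pi t(h)=\pi^2A+\pi g_1B+g_1^2C
\]
for some $A,B,C\in R_v[x]$. Reducing modulo $M_v$ gives $\overline{g}_1^2\overline{C}=0$. Since $k_v[x]$ is a domain, $\overline{C}=0$, so $C=\pi C'$. Dividing by $\pi$ (valid since $R_v$ is a domain) and reducing again gives $\overline{t}(\overline{h})=\overline{g}_1(\overline{B}+\overline{g}_1\overline{C'})$. Thus $\overline{g}_1\mid\overline{t}(\overline{h})$.

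Now substitute $\overline{h}$ into the Bézout identity:
\[
u(\overline{h})\,\overline{t}(\overline{h})+w(\overline{h})\,\overline{g}(\overline{h})=1 .
\]
Since $\overline{g}_1$ divides both $\overline{t}(\overline{h})$ and $\overline{g}(\overline{h})$, it divides $1$. This is a contradiction, because $\overline{g}_1$ is irreducible and hence nonconstant.

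\emph{Main obstacle.} The delicate step is the last one: transferring divisibility information from the variable $y$ to $x$ through $\overline{h}$. Knowing only that $\overline{t}\neq0$ is not enough, since $\overline{t}(\overline{h})$ could still share factors with $\overline{g}(\overline{h})$. It is the coprimality of $\overline{t}$ and $\overline{g}$, which comes from $\deg t<\deg g$ and the irreducibility of $\overline{g}$, that survives substitution via the Bézout identity.
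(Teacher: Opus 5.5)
Your proof is correct and has the same structure as the paper's: split off the $g(h)^2$ and $\pi g(h)$ terms, force the coefficient of $g_1^2$ into $M_v$, deduce that $\overline{g}_1$ divides the reduced residual term evaluated at $\overline{h}$, and use irreducibility of $\overline{g}$ to transfer back to $y$. The only difference is in that last step, and it is cosmetic. You use the remainder $t$ with $\deg t<\deg g$ and push the Bézout identity $u\overline{t}+w\overline{g}=1$ through $y\mapsto\overline{h}(x)$. The paper instead keeps the unreduced $A$ and takes a root $\alpha$ of $\overline{g}_1$; since $\overline{g}$ is the minimal polynomial of $\overline{h}(\alpha)$, it divides $\overline{A}$. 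Note also that you only use the trivial direction $\overline{t}=0\Rightarrow f(h(x))\in\langle\pi,g(h(x))\rangle^2$, not the reverse implication of \eqref{eq3.5}.
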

\begin{proof}
We prove the contrapositive. Assume that
$f(h(x))\in\langle\pi,g_1(x)\rangle^2$.
Since $\overline g(y)$ is a repeated factor of $\overline f(y)$, there exists $\overline Q(y)\in k_v[y]$ such that
$\overline f(y)=\overline g(y)^2\overline Q(y)$.
Choosing a lift $Q(y)\in R_v[y]$ of $\overline Q(y)$, we have
$f(y)-g(y)^2Q(y) \in M_v = \langle \pi \rangle$ and hence there exists $A(y)\in R_v[y]$ such that
\begin{equation} \label{eq:lifting}
f(y) = g(y)^2 Q(y) + \pi A(y).
\end{equation}
Substituting $y=h(x)$ into \eqref{eq:lifting} gives
\begin{equation} \label{eq:composition}
f(h(x)) = g(h(x))^2Q(h(x)) + \pi A(h(x)).
\end{equation}
Since $\overline g_1(x)\mid\overline g(\overline h(x))$, we have
$g(h(x))\in\langle\pi,g_1(x)\rangle$ and therefore
$g(h(x))^2\in\langle\pi,g_1(x)\rangle^2$. Hence \eqref{eq:composition} implies
$\pi A(h(x)) \in \langle \pi,g_1(x) \rangle^2$. Thus, there exists polynomials $U(x), V(x), W(x) \in R_v[x]$ satisfying the relation
\[
\pi A(h(x)) =
\pi^2U(x) +\pi g_1(x)V(x) + g_1(x)^2W(x).
\]
Reducing modulo $\pi$, the above equation yields $\overline g_1(x)^2\overline W(x)=0$. Since $k_v[x]$ is an integral domain and $\overline g_1(x) \neq 0$, we obtain $\overline W(x)=0$. So, $W(x)=\pi\widetilde W(x)$ for some $\widetilde W(x)\in R_v[x]$. Cancelling the factor $\pi$ yields
\[
A(h(x)) = \pi U(x)+g_1(x)V(x)+ g_1(x)^2 \widetilde W(x).
\]
This implies that
$A(h(x))\in\langle\pi,g_1(x)\rangle$. Let $\alpha$ be a root of $\overline g_1(x)$ in an algebraic closure of $k_v$ and put $\beta=\overline h(\alpha)$. Since $\overline g_1(x)\mid\overline A(\overline h(x))$, we have $\overline A(\beta)=0$. Moreover, $\overline g_1(x)\mid\overline g(\overline h(x))$ implies $\overline g(\beta)=0$. As $\overline g(y)$ is irreducible over $k_v$, it is the minimal polynomial of $\beta$ and hence $\overline g(y)\mid\overline A(y)$. Therefore,
$A(y)=g(y)B(y)+\pi C(y)$ for some $B(y),C(y)\in R_v[y]$.
Substituting $y=h(x)$ into this identity and then into \eqref{eq:composition}, we obtain
\begin{align}
f(h(x)) &= g(h(x))^2Q(h(x)) + \pi g(h(x))B(h(x)) + \pi^2C(h(x)),
\end{align}
which shows that
$f(h(x))\in\langle\pi,g(h(x))\rangle^2$.
This proves the contrapositive.
\end{proof}
\noindent \textbf{Remark:} The significance of Lemma \ref{lem:5.14} is that, when applying Dedekind's criterion to the polynomial $f(h(x))$, one does not need to factor $g(h(x))$ further. More precisely, suppose that if every irreducible factor $g(x)$ of $\overline{f}(x)$ is a repeated factor of $\overline{f}(x)$. Then, in order to ensure that
$R_v[\theta]$ is integrally closed, where $\theta$ is a root of the irreducible polynomial $f(h(x))$, it is enough to verify that 
\( f(h(x))\notin \langle \pi,g(h(x))\rangle^2 \)
for every irreducible factor $g(x)$ of $\overline{f}(x)$. Indeed, Lemma \ref{lem:5.14} shows that this condition automatically implies 
\( f(h(x))\notin \langle \pi,g_1(x)\rangle^2 \)
for every irreducible factor $g_1(x)$ of $\overline{g(h(x))}$. Thus, no further factorization of $g(h(x))$ is required in the application of Dedekind's criterion.

\begin{lemma} \label{lemma:3.6}
 Assume that the residue field $k_v$ has prime characteristic $p > 0$ and $M_v = \langle \pi \rangle$. Let $f(x)$, $h(x)$ and $g(x)$ be monic polynomials in $R_v[x]$ such that $g(x)$ is irreducible modulo $M_v$. If $f(x) \in \langle \pi,\, g(x)^2 \rangle$, then
$f(x^p) \in \langle \pi,\,g(x) \rangle^2 \,\, \text{if and only if} \,\, f(h(x^p)) \in \langle \pi,\,g(h(x)) \rangle^2.$
\end{lemma}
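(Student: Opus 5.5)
The plan is to chain together equivalences that have already been established, with $f(x)\in\langle \pi, g(x)^2\rangle$ as the standing hypothesis throughout.

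\textbf{Step 1.} By Corollary~\ref{Cor:4.2}, $f(x^p)\in\langle\pi,g(x)\rangle^2$ if and only if $f(x)\in\langle\pi,g(x)\rangle^2$. The corollary needs a prime $p$ with $v(p)>0$, and this holds because the residue field $k_v$ has characteristic $p$.

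\textbf{Step 2.} Apply Theorem~\ref{Thm:4.1} with the monic polynomial $H(x)=h(x^p)$ in place of $h$. This gives $f(x)\in\langle\pi,g(x)\rangle^2$ if and only if $f(h(x^p))\in\langle\pi,g(h(x^p))\rangle^2$. The hypotheses of Theorem~\ref{Thm:4.1} are satisfied: $g$ is irreducible modulo $M_v$, $f\in\langle\pi,g^2\rangle$, and $H$ is monic.

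\textbf{Step 3.} It remains to show that
\[
f(h(x^p))\in\langle\pi,g(h(x^p))\rangle^2 \iff f(h(x^p))\in\langle\pi,g(h(x))\rangle^2 .
\]
For this I would rerun the second half of the proof of Corollary~\ref{Cor:4.2}, which, as remarked there, does not use irreducibility. Take $G(x)=g(h(x))$ in place of $g$ and write
\[
G(x^p)=G(x)^p+\pi T(x)
\]
for some $T(x)\in R_v[x]$. Substituting into an expression $\pi^2h_1+\pi G(x^p)h_2+G(x^p)^2h_3$ and expanding gives the following.
\begin{itemize}
\item $\pi G(x^p)$ lies in $\pi G(x)\,R_v[x]+\pi^2R_v[x]$.
\item $G(x^p)^2=G(x)^{2p}+2\pi G(x)^pT+\pi^2T^2$ lies in $\langle\pi,G(x)\rangle^2$, since $p\ge 2$.
\end{itemize}
This proves the forward implication. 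For the reverse implication, write $G(x)^p=G(x^p)-\pi T(x)$ and argue symmetrically. Here $G(x)\in\langle \pi, G(x^p)\rangle$ is not immediate, so I would instead use the structure $f(h(x^p))=g(h(x^p))^2Q(h(x^p))+\pi A(h(x^p))$. The goal is to reduce both memberships to the single condition $\pi A(h(x^p))\in$ (the respective ideal), in the spirit of Lemma~\ref{lem:5.14}, and then compare the two conditions modulo $\pi$.

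\textbf{Main obstacle.} I expect the hardest point to be the congruence $G(x^p)\equiv G(x)^p \pmod{\pi}$ used in Step~3. In characteristic $p$ one only has $\overline{G}(x)^p=\overline{G}^{\sigma}(x^p)$, where $\sigma$ is the Frobenius map acting on the coefficients. So the identity holds exactly when the coefficients of $\overline{g\circ h}$ are fixed by Frobenius, for example when they lie in $\mathbb F_p$; this is the same implicit input used in the proof of Corollary~\ref{Cor:4.2}. In general I would try to make this step go through via Lemma~\ref{lem:5.14}: choose irreducible factors $\overline{g}_1$ of $\overline{G}(x)$ and compare them with the factors of $\overline{G}(x^p)=\overline{G^{\sigma^{-1}}}(x)^p$, and check membership factor by factor using Proposition~\ref{lem:ideal_split}. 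If this fails, I would add the Frobenius-invariance of the coefficients of $\overline{g\circ h}$ as the operative hypothesis, which is the same condition under which Corollary~\ref{Cor:4.2} is valid.
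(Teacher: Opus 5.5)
Your skeleton is the paper's. Corollary~\ref{Cor:4.2} handles the passage $x\mapsto x^p$ and Theorem~\ref{Thm:4.1} handles the substitution. What remains is comparing $\langle \pi, g(h(x^p))\rangle^2$ with $\langle \pi, g(h(x))\rangle^2$ for $f(h(x^p))$, which the paper only asserts ``by arguments analogous to Corollary~\ref{Cor:4.2}''. Your Step~2, applying Theorem~\ref{Thm:4.1} directly with $h(x^p)$ (valid in full generality), is a real simplification. It makes the paper's last paragraph unnecessary, namely Theorem~\ref{Thm:4.1} with $x^p$ applied to $f\circ h$ and a factor $g_1$, followed by Lemma~\ref{lem:5.14}.

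The obstacle you flag is genuine. It is a defect of the statement itself, so drop the plan to push the general case through Lemma~\ref{lem:5.14}; no proof can succeed there. Here is a counterexample:
\begin{itemize}
\item Take $R_v=\mathbb{Z}[\zeta_3]_{(2)}$, so $\pi=p=2$ and $k_v=\mathbb{F}_4$. Let $\omega=\zeta_3$, $g=x-\omega$, $f=g^2$ and $h=x+\omega$.
\item Then $g(h(x))=x$ and $f(h(x^2))=x^4\in\langle 2,x\rangle^2$, so the right-hand side holds.
\item Under $x\mapsto\overline{\omega}$, the polynomial $f(x^2)=(x^2-\omega)^2$ maps to $(\overline{\omega}^2-\overline{\omega})^2=1$. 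Hence $f(x^2)\notin\langle 2,x-\omega\rangle$, and the left-hand side fails.
\end{itemize}
The same example refutes Corollary~\ref{Cor:4.2}. So the hypothesis $k_v=\mathbb{F}_p$ must be added, as you suspected. This is the only case used over $\mathbb{Q}$, with $R_v=\mathbb{Z}_{(p)}$.

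Under that hypothesis, your sketched reverse direction in Step~3 closes as follows.
\begin{itemize}
\item Write $f=g^2Q+\pi A$ and $G=g\circ h$. The term $G(x^p)^2Q(h(x^p))$ lies in both ideals, because $\overline{G}(x^p)=\overline{G}(x)^p$.
\item For monic $U$, one has $\pi B\in\langle\pi,U\rangle^2$ if and only if $\overline{U}\mid\overline{B}$ (cancel $\pi$ as in the proof of Theorem~\ref{Thm:4.1}).
\item Membership in $\langle\pi,G(x^p)\rangle^2$ therefore means $\overline{G}(x^p)\mid(\overline{A}\circ\overline{h})(x^p)$. By division with remainder this is equivalent to $\overline{G}\mid\overline{A}\circ\overline{h}$.
\item Membership in $\langle\pi,G(x)\rangle^2$ means $\overline{G}\mid(\overline{A}\circ\overline{h})^p$.
\item Each condition is equivalent to $\overline{g}\mid\overline{A}$: evaluate at a root $\alpha$ of $\overline{G}$ and use irreducibility of $\overline{g}$ at $\overline{h}(\alpha)$.
\end{itemize}
Taking $h=x$ in the same computation shows that $f(x^p)\in\langle\pi,g\rangle^2$ is also equivalent to $\overline{g}\mid\overline{A}$. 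So with $k_v=\mathbb{F}_p$ the lemma follows in a few lines, without Steps~1 and~2.
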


\begin{proof}
 Using the assumption $f(x) \in \langle \pi,\, g(x)^2 \rangle$ by Corollary~\ref{Cor:4.2}, we have $f(x^p) \in \langle \pi,\, g(x) \rangle^2$ if and only if $f(x) \in \langle \pi,\, g(x) \rangle^2$.
By Theorem~\ref{Thm:4.1}, this latter condition holds if and only if $f(h(x)) \in \langle \pi,\, g(h(x)) \rangle^2$. To prove the result, it only remains to show that $f(h(x)) \in \langle \pi,\, g(h(x)) \rangle^2$ if and only if $f(h(x^p)) \in \langle \pi,\, g(h(x)) \rangle^2$. In view of $v(p)>0$ and the arguments analogues to those used in Corollary \ref{Cor:4.2}, proving the above equivalence is same as proving $f(h(x)) \in \langle \pi,\, g(h(x)) \rangle^2$ if and only if $f(h(x^p)) \in \langle \pi,\, g(h(x^p)) \rangle^2$.

If $f(h(x)) \in \langle \pi, g(h(x)) \rangle^2$ then trivially we have $f(h(x^p)) \in \langle \pi, g(h(x^p)) \rangle^2$. Now suppose $f(h(x^p)) \in \langle \pi, g(h(x^p)) \rangle^2$ and let $g_1(x)$ be the irreducible factor of $g(h(x))$ modulo $M_v$. Then we have $(f\circ h)(x^p) \in \langle \pi, g_1(x^p) \rangle^2$. As $g_1(x)$ is an irreducible repeated factor of $f\circ h$ modulo $M_v$, Theorem \ref{Thm:4.1} with $f = f \circ h$ and $h(x) = x^p$ yields $(f \circ h)(x) \in \langle \pi, g_1(x) \rangle^2$. Now since $g(x)$ is an irreducible repeated factor of $f(x)$ and $g_1(x)$ is an irreducible factor of $g \circ h$, applying Lemma \ref{lem:5.14} yields $f(h(x)) \in \langle \pi, g(h(x)) \rangle^2$. This completes the proof of the lemma.
\end{proof}

\begin{proposition} \label{pro:5.13}
Let $M_v=\langle\pi \rangle$ be maximal ideal and  $f(x), h(x) \in R_v[x]$ be monic polynomials such that the composition $\mathcal{F}(x) = f(h(x))$ is irreducible  over $F$. Let $\alpha$ be a root of $f(x)$ and $\theta$ be a root of $\mathcal{F}(x)$. If $R_v[\alpha]$ is not integrally closed, then $R_v[\theta]$ is also not integrally closed.
\end{proposition}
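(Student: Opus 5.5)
We argue through the Uchida-type criterion of Theorem~\ref{valuation dede}. Since $R_v[\alpha]$ is not integrally closed, condition (i) of the GDC fails: some irreducible factor $\overline{g}(y)$ of $\overline{f}(y)$ is repeated, and Theorem~\ref{valuation dede} gives an index $i$ with $f(y)\in\langle \pi, g(y)\rangle^2$, where $g=g_i$ is a monic lift. (Here $M_v=\langle\pi\rangle$ is principal by hypothesis, which is what lets us use Theorem~\ref{Thm:4.1}.) The aim is to produce an irreducible factor $\overline{g}_1(x)$ of $\overline{\mathcal F}(x)$ with $\mathcal F(x)\in\langle\pi,g_1(x)\rangle^2$. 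Theorem~\ref{valuation dede} applied to the monic irreducible $\mathcal F$ then shows that $R_v[\theta]$ is not integrally closed.

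\textbf{Step 1.} Since $\langle\pi,g\rangle^2\subseteq\langle\pi,g^2\rangle$, we have $f(y)\in\langle\pi,g(y)^2\rangle$. Theorem~\ref{Thm:4.1} therefore applies and gives $f(h(x))\in\langle \pi, g(h(x))\rangle^2$. Alternatively, just substitute $y=h(x)$ in a representation $f=\pi^2A+\pi gB+g^2C$.

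\textbf{Step 2.} Let $\overline{g}_1(x)$ be any monic irreducible factor of $\overline{g}(\overline{h}(x))$, with monic lift $g_1$. This polynomial has positive degree because $g\circ h$ is monic of positive degree. Then $g(h(x))\in\langle\pi, g_1(x)\rangle$, so $\langle\pi,g(h(x))\rangle^2\subseteq\langle\pi,g_1(x)\rangle^2$. Hence $\mathcal F(x)\in\langle\pi,g_1(x)\rangle^2$.

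\textbf{Step 3.} We check that $\overline g_1$ is a genuine irreducible factor of $\overline{\mathcal F}$, as Theorem~\ref{valuation dede} requires. Reducing modulo $\pi$ gives $\overline{f}=\overline g^2\overline Q$, so $\overline{\mathcal F}=\overline g(\overline h)^2\,\overline Q(\overline h)$, and $\overline g_1$ divides it. So $\overline g_1$ appears among the $\overline g_j$ in the factorization of $\overline{\mathcal F}$, in fact with multiplicity at least two. Theorem~\ref{valuation dede} then yields that $R_v[\theta]$ is not integrally closed.

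The only delicate point is bookkeeping: Theorem~\ref{valuation dede} must be applied with lifts of the actual irreducible factors of $\overline{\mathcal F}$, and membership in $\langle\pi,g_1\rangle^2$ does not depend on the choice of lift. Indeed, if $g_1'=g_1+\pi u$, then $\langle\pi,g_1'\rangle=\langle\pi,g_1\rangle$. Apart from this, the argument is the easy direction of Theorem~\ref{Thm:4.1} combined with the inclusion of ideals above.
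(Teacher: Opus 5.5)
Your proof is correct and follows the paper's argument essentially step for step. You substitute $h$ into $f\in\langle\pi,g\rangle^2$, pass to an irreducible factor $\overline g_1$ of $\overline g(\overline h)$ via the inclusion $\langle\pi,g(h(x))\rangle^2\subseteq\langle\pi,g_1(x)\rangle^2$, and apply Theorem~\ref{valuation dede} to $\mathcal F$. Your extra remarks are harmless refinements: the direct substitution makes Theorem~\ref{Thm:4.1} unnecessary for this direction, and membership does not depend on the choice of lift. Like the paper, you also tacitly use that $f$ is irreducible, which follows from the irreducibility of $f\circ h$.
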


\begin{proof}
If $R_v[\alpha]$ is not integrally closed, then by Theorem~\ref{valuation dede}, there exists a monic polynomial $g(x) \in R_v[x]$ such that its reduction $\overline{g}(x)$ is an irreducible repeated factor of $f(x)$ modulo $M_v$. This implies that $f(x) \in \langle \pi, g(x) \rangle^2.$ By substituting $h(x)$ for $x$ in this ideal relation, it follows that
\begin{equation}\label{eqn;513}
     \mathcal{F}(x) \in \langle M_v, g(h(x)) \rangle^2.
\end{equation}
  Let $\overline{g_1}(x)$ be an irreducible factor of $\overline{g}(\overline{h}(x))$ in the residue field $k_v$, and let $g_1(x) \in R_v[x]$ be a monic lift of this factor. As $g(x)$ was a repeated factor of $f(x)$ modulo $M_v$, it is clear that $g(h(x))$ is a repeated factor of $\cF(x)$ modulo $M_v$. Thus, we have $g_1(x)$ is a repeated factor of $\cF(x)$ modulo $M_v$. Now using $\overline{g_1}(x)$ divides $\overline{g}(\overline{h}(x))$, we have
\[
    \langle M_v, g(h(x)) \rangle \subseteq \langle M_v, g_1(x) \rangle.
\]
The square of both sides of above inclusion yields $\langle M_v, g(h(x)) \rangle^2 \subseteq \langle M_v, g_1(x) \rangle^2$. By \eqref{eqn;513} and the above inclusion, we have  $\mathcal{F}(x)\in \langle M_v, g_1(x) \rangle^2$. Thus, we have proved that $\cF(x)$ has a repeated factor $g_1(x)$ modulo $M_v$ such that $\mathcal{F}(x)\in \langle M_v, g_1(x) \rangle^2$. Applying Theorem~\ref{valuation dede}, we conclude that $R_v[\theta]$ is not integrally closed. This completes the proof.
\end{proof}

\begin{proposition}\label{pro:5.7}
Let $f(x),\, h(x)\in R_v[x]$, where $h(x)=x^2q(x)+ax+b$, such that
$\mathcal{F}(x)=f(h(x))$ is monic and irreducible over $F$.
Let $\theta$ be a root of $\mathcal{F}(x)$. If $a f'(b) \in M_v$ and 
$f(b) \in M_v^2$, then $R_v[\theta]$ is not integrally closed.
\end{proposition}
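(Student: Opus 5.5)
We plan to apply the criterion of Theorem~\ref{valuation dede}: we exhibit a monic $g_1(x)\in R_v[x]$, irreducible modulo $M_v$, with $\mathcal{F}(x)\in\langle M_v, g_1(x)\rangle^2$. The natural choice is $g_1(x)=x$. We shall show $\mathcal{F}(x)\in\langle M_v,x\rangle^2$ and that $\overline{x}$ divides $\overline{\mathcal{F}}(x)$, so that $x$ is an irreducible factor of $\overline{\mathcal{F}}$ as the criterion requires. Here $f(h(x))$ is monic and $h$ has coefficients in $R_v$; we may take $f,q\in R_v[x]$, as in the statement.

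The key computation is a Taylor expansion of $f$ around $b$. Write $h(x)=b+x(xq(x)+a)$ and set $u(x)=x(xq(x)+a)$. Then
\[
f(h(x))=f(b)+f'(b)\,u(x)+u(x)^2 S(x)
\]
for some $S(x)\in R_v[x]$, since $f(b+y)-f(b)-f'(b)y$ is divisible by $y^2$ in $R_v[y]$. We examine the three terms in turn.

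\begin{enumerate}
\item[(i)] Since $u(x)\in\langle x\rangle$, we have $u(x)^2S(x)\in\langle x\rangle^2\subseteq\langle M_v,x\rangle^2$.
\item[(ii)] We expand $f'(b)u(x)=f'(b)q(x)x^2+af'(b)\,x$. The first summand lies in $\langle x\rangle^2$. The second lies in $M_v\cdot\langle x\rangle\subseteq\langle M_v,x\rangle^2$, because $af'(b)\in M_v$.
\item[(iii)] The constant term $f(b)$ lies in $M_v^2\subseteq\langle M_v,x\rangle^2$ by hypothesis.
\end{enumerate}

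Hence $\mathcal{F}(x)\in\langle M_v,x\rangle^2$. Reducing modulo $M_v$, we get $\overline{\mathcal{F}}(x)\in\langle x\rangle^2$, so $\overline{x}$ is a repeated irreducible factor of $\overline{\mathcal{F}}(x)$.

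Theorem~\ref{valuation dede} then says $R_v[\theta]$ is integrally closed only if $\mathcal{F}\notin\langle M_v,g_i\rangle^2$ for every irreducible factor $g_i$, with $g_i=x$ among them. This fails, so $R_v[\theta]$ is not integrally closed. The only subtle point is making the expansion valid over $R_v$ without dividing by factorials. We handle it by the polynomial identity $f(b+y)=f(b)+f'(b)y+y^2S(y)$ with $S\in R_v[y]$, which follows from the binomial theorem applied to each monomial, so no inverses are needed. Irreducibility of $\mathcal{F}$ over $F$ is used only so that the criterion of Theorem~\ref{valuation dede} applies.
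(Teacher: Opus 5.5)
Your proof is correct and follows the same route as the paper: take $g(x)=x$, show $\mathcal{F}(x)\in\langle M_v,x\rangle^2$, and invoke Theorem~\ref{valuation dede}. Your expansion $f(b+y)=f(b)+f'(b)y+y^2S(y)$ also fills in a step the paper leaves implicit: the linear coefficient of $\mathcal{F}$ is $af'(b)$, and that is exactly where the hypothesis $af'(b)\in M_v$ is needed, whereas the paper's short proof attributes the repeated factor $x$ to $f(b)\in M_v^2$ alone.
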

\begin{proof}
We have $\mathcal{F}(0)=f(b)$. Since $f(b) \in M_v^2$, it follows that $x$ is a repeated factor of $\mathcal{F}(x)$ modulo $M_v$. Let $g(x)=x$ be a monic lift of this factor in $R_v[x]$. Then
$\mathcal{F}(x) \in \langle M_v,g(x)\rangle^2$. By Theorem~\ref{valuation dede}, it follows that $R_v[\theta]$ is not integrally closed. This completes the proof.
\end{proof}

\begin{lemma}\label{pro:5.12}
 Let $f(x) \in R_v[x]$ be a monic polynomial of degree $n$ and  $h(x) = x^m + a x^{m-1} + b \in R_v[x]$. Suppose that $f(h(x))$ is irreducible over $F$ and $v(m)=0$. Let $\theta$ be a root of $f(h(x))$, and set $L = F(\theta)$. Furthermore, assume that 
\[
m^{mn} f\left(h\left(\frac{-a(m-1)}{m}\right)\right) \in M_v^2.
\]
Then the simple ring extension $R_v[\theta]$ is not integrally closed.
\end{lemma}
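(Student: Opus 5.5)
The plan is to locate a linear factor $x-c$ that occurs with multiplicity at least two in $\overline{f(h(x))}$ and to show that $f(h(x))\in\langle M_v, x-c\rangle^2$. Theorem~\ref{valuation dede} then gives that $R_v[\theta]$ is not integrally closed. The natural candidate for $c$ is the critical point of $h$ away from $0$.

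First, since $v(m)=0$, the element $m$ is a unit of $R_v$. Hence
\[
c:=\frac{-a(m-1)}{m}\in R_v ,
\]
and $m^{mn}$ is also a unit. The hypothesis $m^{mn} f(h(c))\in M_v^2$ is therefore equivalent to $f(h(c))\in M_v^2$.

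Next, put $\mathcal F(x)=f(h(x))$. We have
\[
h'(x)=x^{m-2}\bigl(mx+a(m-1)\bigr),
\]
so $h'(c)=0$ exactly, and hence $\mathcal F'(c)=f'(h(c))\,h'(c)=0$. Since $c\in R_v$ and $\mathcal F\in R_v[x]$, the Taylor expansion at $c$ gives
\[
\mathcal F(x)=\mathcal F(c)+\mathcal F'(c)(x-c)+(x-c)^2Q(x)=\mathcal F(c)+(x-c)^2Q(x)
\]
with $Q(x)\in R_v[x]$. The ring $R_v[x]$ suffices here because the division of $\mathcal F(x)$ by the monic polynomial $(x-c)^2$ stays in $R_v[x]$, and the linear remainder is determined by the values $\mathcal F(c)$ and $\mathcal F'(c)$.

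Finally, $\mathcal F(c)=f(h(c))\in M_v^2\subseteq\langle M_v,x-c\rangle^2$, and $(x-c)^2Q(x)\in\langle M_v,x-c\rangle^2$. Hence $\mathcal F(x)\in\langle M_v,x-c\rangle^2$. The polynomial $x-c$ is monic and irreducible modulo $M_v$. The identity above also shows that $\overline{x-c}$ is a repeated irreducible factor of $\overline{\mathcal F}(x)$, so $x-c$ is one of the $g_i$ in the statement of Theorem~\ref{valuation dede}. That theorem then yields that $R_v[\theta]$ is not integrally closed.

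I expect no serious obstacle. The only point to check is that the Taylor remainder has coefficients in $R_v$, which holds because $c\in R_v$, and this is exactly where $v(m)=0$ is used. No principality of $M_v$ is needed, since the ideal-membership form of the criterion applies directly.
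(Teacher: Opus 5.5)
Your proof is correct and is essentially the paper's argument. The paper passes to $z=m\theta+a(m-1)$, whose minimal polynomial $H(x)=m^{mn}f\bigl(h\bigl(\frac{x-a(m-1)}{m}\bigr)\bigr)$ satisfies $H(0)\in M_v^2$ and $H'(0)=0$, so $H\in\langle M_v,x\rangle^2$; it then transfers back to $\theta$ via Proposition~\ref{pro3.7}, whereas you expand $f(h(x))$ directly about $c=-a(m-1)/m$ and so avoid the change of generator. Like the paper, you tacitly need $m\ge 2$ so that $h'(c)=c^{m-2}\bigl(mc+a(m-1)\bigr)$ vanishes, which is the paper's standing assumption.
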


\begin{proof}
Define $z := m\theta + a(m-1)$. Since $\theta$ is a root of $f(h(x))$, the minimal polynomial of $z$ over $F$ is given by
\[
    H(x) = m^{mn} f\left(h\left(\frac{x - a(m-1)}{m}\right)\right).
\]
The constant term of $H(x)$ is
\[
    H(0) = m^{mn} f\left(h\left(\frac{-a(m-1)}{m}\right)\right).
\]
By our hypothesis, $H(0) \in M_v^2$. Next, we compute the derivative of $H(x)$ at $x = 0$ to determine the multiplicity of $x$ modulo $M_v$. By the chain rule, we have
\[
    H'(x) = m^{mn-1} f'\left(h\left(\frac{x - a(m-1)}{m}\right)\right) h'\left(\frac{x - a(m-1)}{m}\right).
\]
Using $h'(x) = x^{m-2}(mx + a(m-1))$, we have
\[
    h'\left(\frac{-a(m-1)}{m}\right) =  0.
\]
Consequently, $H'(0) = 0$. Since $\overline{H}(0) = \overline{0}$ and $\overline{H}'(0) = \overline{0}$ in the residue field $R_v/M_v$, $x$ is a repeated irreducible factor of $\overline{H}(x)$. Therefore, in $R_v[x]$, $H(x)$ can be expanded as $x^2 Q(x) + c_1 x + H(0)$ for some $Q(x) \in R_v[x]$, where the linear coefficient is $c_1 = H'(0) = 0$. Since the linear term vanishes and the constant term $H(0) \in M_v^2$, every term of $H(x)$ belongs to the square of the ideal $\langle M_v, x \rangle$. That is, $H(x) \in \langle M_v, x \rangle^2$. By Theorem~\ref{valuation dede}, the condition $H(x) \in \langle M_v, x \rangle^2$ implies that the extension $R_v[z] = R_v[m\theta + a(m-1)]$ is not integrally closed. Now using $v(m)=0$ and by Proposition~\ref{pro3.7}, we obtain that $R_v[\theta]$ is not integrally closed. This completes the proof.
\end{proof}

The following well known lemma will be used in proof of our main result. Interested readers can see the standard book by Narkiewicz \cite[Proposition~4.15]{Nark} for the proof.

\begin{lemma}\label{lemma:3.10}
Let $K$ and $L$ be a number field with $K \subset L$. Then $\D_K^{\,[L:K]} \mid \D_L.$
\end{lemma}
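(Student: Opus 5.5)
The plan is to argue through rings of integers and discriminants, using the multiplicativity of the different in a tower. Write $\mathcal{O}_K$ and $\mathcal{O}_L$ for the rings of integers of $K$ and $L$, put $d=[L:K]$, and let $\mathfrak{d}_{L/K}$ denote the relative discriminant ideal of $L/K$, an ideal of $\mathcal{O}_K$. The claim will follow from the transitivity formula for discriminants,
\[
\D_L=\pm\,\D_K^{\,d}\;\mathcal{N}_{K/\Q}(\mathfrak{d}_{L/K}),
\]
because $\mathcal{N}_{K/\Q}(\mathfrak{d}_{L/K})$ is a positive rational integer, and this gives $\D_K^{\,d}\mid \D_L$ at once.

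To establish the transitivity formula, I will use the different. First I recall that $\mathfrak{D}_{L/\Q}=\mathfrak{D}_{L/K}\,\mathfrak{D}_{K/\Q}\mathcal{O}_L$, which comes from the duality description of the inverse different together with transitivity of the trace, $\operatorname{Tr}_{L/\Q}=\operatorname{Tr}_{K/\Q}\circ\operatorname{Tr}_{L/K}$. Next I recall that the absolute value of the discriminant equals the absolute norm of the different, $|\D_L|=\mathcal{N}_{L/\Q}(\mathfrak{D}_{L/\Q})$, and likewise $\mathfrak{d}_{L/K}=\mathcal{N}_{L/K}(\mathfrak{D}_{L/K})$. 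Then I take $\mathcal{N}_{L/\Q}=\mathcal{N}_{K/\Q}\circ\mathcal{N}_{L/K}$ of the different identity. For the extended ideal this uses $\mathcal{N}_{L/K}(\mathfrak{a}\mathcal{O}_L)=\mathfrak{a}^{d}$, and combining the pieces gives $|\D_L|=|\D_K|^{d}\,\mathcal{N}_{K/\Q}(\mathfrak{d}_{L/K})$.

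As an alternative route that avoids the different, I would localize at a prime $p$. Then $\mathcal{O}_{K,(p)}$ is a PID, so $\mathcal{O}_{L,(p)}$ is free over it with some basis $\beta_1,\dots,\beta_d$. If $\omega_1,\dots,\omega_n$ is a $\Z_{(p)}$-basis of $\mathcal{O}_{K,(p)}$, then the products $\omega_i\beta_j$ form a basis of $\mathcal{O}_{L,(p)}$. A block-determinant computation with the trace form, via $\operatorname{Tr}_{L/\Q}=\operatorname{Tr}_{K/\Q}\circ\operatorname{Tr}_{L/K}$, gives
\[
\operatorname{disc}(\omega_i\beta_j)=\operatorname{disc}(\omega)^{d}\,\mathcal{N}_{K/\Q}\bigl(\operatorname{disc}_{L/K}(\beta)\bigr).
\]
This equality holds up to units of $\Z_{(p)}$, so it yields $v_p(\D_L)\ge d\,v_p(\D_K)$ for every prime $p$. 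Since this holds for all $p$, we get $\D_K^{\,d}\mid\D_L$.

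The main obstacle is this block-determinant identity, since the naive computation of the product basis discriminant is the step that needs care. If needed, I will sidestep it by citing the standard result, which is \cite[Proposition~4.15]{Nark}, as the paper itself does.
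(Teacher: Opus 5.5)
Your proposal is correct. The tower formula $|\D_L|=|\D_K|^{[L:K]}\,\mathcal{N}_{K/\Q}(\mathfrak{d}_{L/K})$ gives the divisibility immediately, and you obtain it either from transitivity of the different together with the fact that the relative discriminant is the relative norm of the relative different, or from your localized product-basis computation. The paper gives no argument of its own and only cites \cite[Proposition~4.15]{Nark}, so you are supplying the standard proof behind that citation rather than a different route.
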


\begin{proposition}\label{Prop:4.13}
    Suppose $M_v$ is a principal ideal generated by $\pi$ and the residue field $k_v$ is perfect. Let $f(x) \in R_v[x]$ be a monic polynomial of degree $n$ and  $h(x) = x^{m} + a x^{m-1} + b \in R_v[x]$. Suppose that $\min(v(a), v(m)) = 0$ and that the composition $\mathcal{F}(x)=f(h(x))$ is irreducible and separable over $F$. Let $\theta$ and $\alpha$ be roots of $\mathcal{F}(x)$ and $f(x)$, respectively. Then  $R_v[\theta]$ is not integrally closed if and only if at least one of the following holds:
\begin{enumerate}
    \item  $R_v[\alpha]$ is not integrally closed;
    \item $f(b) \in M_v^2$ whenever $m \geq 3$ or $af'(b) \in M_v$;
    \item $m^{mn} f\!\left(h\!\left(\dfrac{-a(m-1)}{m}\right)\right) \in M_v^2$ whenever $v(m)=0$.
\end{enumerate}
\end{proposition}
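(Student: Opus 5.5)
The ``if'' direction is already available from earlier results: (1) gives non-integral closedness by Proposition~\ref{pro:5.13}; (2) for $m\ge 3$ or $af'(b)\in M_v$ gives it by Proposition~\ref{pro:5.7}, since for $m\ge3$ the linear coefficient of $h$ is zero (write $h=x^2q(x)+0\cdot x+b$, so $af'(b)$ is replaced by $0\cdot f'(b)\in M_v$); and (3) gives it by Lemma~\ref{pro:5.12}. So the work is the converse. Assume $R_v[\alpha]$ is integrally closed, $f(b)\notin M_v^2$ in the relevant cases, and $H(0)=m^{mn}f(h(-a(m-1)/m))\notin M_v^2$ when $v(m)=0$. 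We must show $\mathcal F(x)\notin\langle\pi,g_1(x)\rangle^2$ for every monic irreducible repeated factor $\overline g_1$ of $\overline{\mathcal F}$, and then conclude by Theorem~\ref{valuation dede}.

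Fix such a $g_1$ with root $\gamma$ in $\overline{k_v}$, and put $\beta=\overline h(\gamma)$. Let $\overline g$ be the minimal polynomial of $\beta$, which is an irreducible factor of $\overline f$. Since $\overline{\mathcal F}=\overline f(\overline h)$, the multiplicity of $\gamma$ in $\overline{\mathcal F}$ is the multiplicity $e$ of $\beta$ in $\overline f$ times the multiplicity of $\gamma$ as a root of $\overline h(x)-\beta$. Two cases arise.

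\emph{Case A: $e\ge2$.} Since $R_v[\alpha]$ is integrally closed, $f\notin\langle \pi,g\rangle^2$ while $f\in\langle\pi,g^2\rangle$. Theorem~\ref{Thm:4.1} then gives $f(h)\notin\langle\pi,g(h)\rangle^2$, and Lemma~\ref{lem:5.14} upgrades this to $f(h)\notin\langle\pi,g_1\rangle^2$.

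\emph{Case B: $e=1$.} Then $\gamma$ is a repeated root of $\overline h-\beta$, so $\overline h'(\gamma)=\gamma^{m-2}(\overline m\gamma+\overline a(m-1))=0$. Either $\gamma=0$ (forced when $m\ge3$ and $\gamma^{m-2}=0$), or $\overline m\gamma=-\overline a(m-1)$.

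\emph{Subcase $\gamma=0$.} Here $g_1=x$, and we need $\mathcal F\notin\langle\pi,x\rangle^2$, i.e.\ not both $\mathcal F(0)=f(b)\in M_v^2$ and $\mathcal F'(0)=f'(b)h'(0)\in M_v$. We have $h'(0)=0$ if $m\ge3$ and $h'(0)=a$ if $m=2$, so hypothesis (2) excludes exactly this.

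\emph{Subcase $\gamma\neq 0$.} If $v(m)>0$ then $\overline a(m-1)=-\overline a=0$, contradicting $\min(v(a),v(m))=0$. So $v(m)=0$ and $\gamma=\overline{-a(m-1)/m}$, a rational point, so $g_1$ is linear. We transport via $z=m\theta+a(m-1)$ (Proposition~\ref{pro3.7}) to $H(x)$, where the factor becomes $x$. As in Lemma~\ref{pro:5.12}, $H'(0)=0$, so $H\in\langle\pi,x\rangle^2$ iff $H(0)\in M_v^2$, which hypothesis (3) excludes.

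The main obstacle is the bookkeeping in Case B. First, one needs care to see that the factor $\gamma$ really has the form $x$ or $x-c$ with $c$ the residue of $-a(m-1)/m$; here perfectness of $k_v$ and separability handle the factor-to-root translation. Second, one must check that in Case A the hypothesis $f\in\langle\pi,g^2\rangle$ holds exactly for repeated factors. Finally, the linear change of variable must preserve membership in $\langle\pi,g_1\rangle^2$, which holds because Proposition~\ref{pro3.7} identifies the rings.
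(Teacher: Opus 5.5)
Your proof is correct, and its core is the same root-by-root analysis as the paper's Case~II. Your Case~A is the paper's Subcase~2.2 (Theorem~\ref{Thm:4.1} followed by Lemma~\ref{lem:5.14}), and your two subcases of Case~B are its Subcases~2.1 and~2.3.

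The real difference is how the case $f(b)\notin M_v$ is handled. The paper first splits on whether $f(b)\in M_v$. It treats $f(b)\notin M_v$ (Case~I) by counting discriminants, using Theorem~\ref{Discriminant}, Lemma~\ref{lemma:3.10}, and $v(\D_{\mathcal F})-v(\D_{L/F})=2v([\bA_L:R_v[\theta]])$. This is also why it must check separately that $m^{mn}f(h(-a(m-1)/m))$ is a unit when $v(m)>0$.

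You notice that the root analysis never uses $f(b)\in M_v$: if $f(b)\notin M_v$, then $\gamma=0$ is simply not a root of $\overline{\mathcal F}$. So you run the analysis uniformly. The possibility $v(m)>0$ with $\gamma\neq 0$ is ruled out at once, because it forces $\overline a=0$.

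This is more than cosmetic. The paper's Case~I relies on a number-field lemma, on $\bA_L$ being free over $R_v$, and on an integer-valued index with $v(\pi)=1$. These are standard for discrete valuation rings but are not justified for an arbitrary Krull valuation. Your argument uses nothing beyond Theorem~\ref{valuation dede}, so it genuinely covers any Krull valuation with principal maximal ideal and perfect residue field, as the proposition claims. The paper's route is only shorter once the discriminant formula is available.

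Two small points.

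First, Proposition~\ref{pro3.7} is the wrong justification for transporting to $H$. It identifies $R_v[\theta]=R_v[z]$ globally, but says nothing about membership of $\mathcal F$ in $\langle\pi,x-\eta\rangle^2$ for this one factor, where $\eta=-a(m-1)/m$. Either of the following fixes it:
\begin{itemize}
\item The $R_v$-algebra automorphism $x\mapsto (x-a(m-1))/m$ of $R_v[x]$ sends $\mathcal F$ to $m^{-mn}H$ and $\langle\pi,x-\eta\rangle$ onto $\langle\pi,x\rangle$.
\item More simply, evaluation at $\eta$ carries $\langle\pi,x-\eta\rangle^2$ into $M_v^2$. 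So $\mathcal F(\eta)=m^{-mn}H(0)\notin M_v^2$ already gives $\mathcal F\notin\langle\pi,x-\eta\rangle^2$. The paper reaches the same point through Proposition~\ref{lem:ideal_split}.
\end{itemize}

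Second, like the paper, you tacitly need $m\ge 2$ in the ``if'' direction for (2); for $m=1$ the statement as written fails. Take $R_v=\mathbb{Z}_{(7)}$, $f=x^2-2$, $a=7$, $b=10$. Then $f(b)=98\in M_v^2$ and $af'(b)\in M_v$, yet $R_v[\theta]=R_v[\sqrt2]$ is integrally closed.
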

\begin{proof}
Assuming that one of the conditions \textup{(1)}, \textup{(2)} or \textup{(3)} are satisfied, it follows from Propositions~\ref{pro:5.13}, \ref{pro:5.7}, and Lemma \ref{pro:5.12}, respectively, that the simple ring extension $R_v[\theta]$ is not integrally closed.\\[0.5mm]
We prove the converse by contraposition. Suppose that $R_v[\alpha]$ is integrally closed and that $f(b)\notin M_v^2$ whenever $a \in M_v$. If $v(m)=0$, we further assume that
\begin{equation} \label{eqn:4.11}
    m^{mn}f\!\left(h\!\left(\frac{-a(m-1)}{m}\right)\right)\notin M_v^2.
\end{equation}
First, we wish to point that under the assumption $\min\{ v(a),v(m) \} = 0$, \eqref{eqn:4.11} holds trivially in the case when $v(m) \neq 0$. Suppose $v(m) \neq 0$, we use the assumption $\min\{ v(a),v(m) \} = 0 $ to get $v(a)=0$. Let $f(x)=x^n+\sum_{i=0}^{n-1}a_ix^i$. Replace $x$ by $h\left(\frac{-a(m-1)}{m}\right)$, we obtain
\[\mathcal{F}\left(\frac{-a(m-1)}{m}\right)=\left((-1)^{m-1}\left(\frac{a}{m}\right)^m(m-1)^{m-1}+b\right)^n+\sum_{i=0}^{n-1}a_i\left((-1)^{m-1}\left(\frac{a}{m}\right)^m(m-1)^{m-1}+b\right)^i\]
Multiplying both side by $m^{mn}$ in above equation yields
\[m^{mn}\mathcal{F}\left(\frac{-a(m-1)}{m}\right)=[(-1)^{m-1}a^m(m-1)^{m-1}+bm^m]^n+m^m\sum_{i=0}^{n-1}m^{m(n-i-1)}a_i[(-1)^{m-1}a^m(m-1)^{m-1}+bm^m]^i\]
Taking valuation of both sides in the above equation
\[v\left(m^{mn}\mathcal{F}\left(\frac{-a(m-1)}{m}\right)\right)=v\left([(-1)^{m-1}a^m(m-1)^{m-1}+bm^m]^n+m^mA\right),\]
where $A=\sum_{i=0}^{n-1}m^{m(n-i-1)}a_i[(-1)^{m-1}a^m(m-1)^{m-1}+bm^m]^i.$ Now using definition of krull valuation and keeping in mind that $v(m)\neq0$, we obtain 
\[v \left( m^{mn}f \left( h \left( \frac{-a(m-1)}{m} \right) \right) \right)= v(a^m (m-1)^{m-1}) = 0 \]
Thus we have shown that \eqref{eqn:4.11} holds irrespective of the assumption on $m$ in \textup{(3)}.
We prove the result for two different cases according to the condition $f(b) \notin M_v$ or $f(b) \in M_v$.\\[0.5mm]
\noindent\textbf{Case I.} First assume $f(b) \notin M_v$, which implies $v(f(b)) = 0$. Let $K = F(\alpha)$ be the field generated by a root of $f(x)$ and  $\mathfrak{D}_{K/F}$ be its field discriminant. Since $R_v[\alpha]$ is integrally closed, the valuation of the polynomial discriminant matches the field discriminant:
\[
    v(\D_f) = v(\mathfrak{D}_{K/F}).
\]
Let $L = F(\theta)$ be the field generated by a root of $\mathcal{F}(x)$ and $\bA_L$ denote the algebraic closure of $R_v$ in $L$. Clearly, the index $[L:K]= \deg h = m$. Applying Lemma \ref{lemma:3.10}, we have 
\begin{equation} \label{eqn:4.12}
    v(\D_{L/F}) \geq m v(\D_{K/F})
\end{equation}
From Theorem~\ref{Discriminant}, recall that the polynomial discriminants $\D_{\cF}$ and $\D_f$ are related by the formula:
\[
    \D_{\mathcal{F}} = \pm \, \D_f^{m} \, f(b)^{m - 2}\,m^{nm}\,  f\left(h\left(\frac{-a (m-1)}{m}\right)\right).
\]
Using the above formula along with \eqref{eqn:4.12}, we get
\begin{equation*} 
    v(\D_{\mathcal{F}}) - v(\mathfrak{D}_{L/F}) \le m v(\D_f) + (m-2)v(f(b)) + v\left(m^{nm}f\left(h\left(\frac{-a(m-1)}{m} \right) \right) \right) - m v(\mathfrak{D}_{K/F}) .
\end{equation*}
By assumption $v(f(b)) = 0$ and the observation $v(\D_f) = v(\mathfrak{D}_{K/F})$, the above inequality simplifies to
\[
    v(\D_{\mathcal{F}}) - v(\mathfrak{D}_{L/F}) \le v\left(m^{nm}f\left(h\left(\frac{-a(m-1)}{m}\right)\right)\right).
\]
Since $m^{nm} f\!\left(h\!\left(\frac{-a(m-1)}{m}\right)\right) \notin M_v^2$, using the standard relation \eqref{formula} we obtain
\[
   2\,v\!\left([\bA_L:R_v[\theta]]\right)= v(\D_{\mathcal{F}}) - v(\mathfrak{D}_{L/F}) \le v\left(m^{nm}f\left(h\left(\frac{-a(m-1)}{m}\right)\right)\right) \leq 1.
\]
Since the module index $[\bA_L : R_v[\theta]]$ is always an integer, the above holds only when $v([\bA_L : R_v[\theta]]) = 0$.
This implies that  $R_v[\theta]$ is integrally closed.

\noindent\textbf{Case II.} Assume $f(b) \in M_v$. By the hypotheses of the converse, we  know that $f(b) \notin M_v^2$ and $$m^{mn} f\!\left(h\!\left(\frac{-a(m-1)}{m}\right)\right) \notin M_v^2.$$ On the contrary assume that  $R_v[\theta]$ is not integrally closed. Then, by Theorem~\ref{valuation dede}, there exists a monic polynomial $g_1(x) \in R_v[x]$, which is irreducible repeated factor of $\cF$ modulo $M_v$ and $\mathcal{F}(x) \in \langle M_v, g_1(x) \rangle^2$. 
Let $\gamma$ be a root of $g_1(x)$ in the algebraic closure of the residue field $k_v$. Then $\gamma$ is a repeated root of $\mathcal{F}(x)$ modulo $M_v$. Thus we have $\mathcal{F}(\gamma) \equiv 0 \pmod{M_v}$ and the derivative $\mathcal{F}'(\gamma) \equiv 0 \pmod{M_v}$. Using the chain rule, the derivative of the composition is
\[
    \mathcal{F}'(x) = f'(h(x)) h'(x) = f'(h(x)) x^{m-2} \bigl(mx + a(m-1)\bigr).
\]
Evaluating this at $x = \gamma$, we obtain the congruence:
\[
    f'(h(\gamma)) \, \gamma^{m-2} \bigl(m\gamma + a(m-1)\bigr) \equiv 0 \pmod{M_v}.
\]
Since  $M_v$ is prime ideal, which implies at least one of the factors must be zero modulo $M_v$. Accordingly, we have three possible subcases to look at\\[1mm]
\noindent \textit{\textbf{Subcase 2.1}: $\gamma \equiv 0 \pmod{M_v}$.} If $\gamma \equiv 0 \pmod{M_v}$, then its minimal polynomial modulo $M_v$ is $g_1(x) \equiv x \pmod{M_v}$. A simple expansion of $f(h(x))$ will yield that $x$ is a repeated factor of $\cF(x)$ modulo $M_v$ if and only if either $m \geq 3$ or $m=2$ and $af'(b) \in M_v$.
The condition $\mathcal{F}(x) \in \langle M_v, x \rangle^2$ implies that the constant term of $\mathcal{F}(x)$ must belong to $M_v^2$. However, the constant term is $\mathcal{F}(0) = f(b)\notin M_v^2$ under these assumptions, which yields a contradiction. Thus, $\gamma \not\equiv 0 \pmod{M_v}$.\\[1mm]
\noindent \textit{\textbf{Subcase 2.2}: $f'(h(\gamma)) \equiv 0 \pmod{M_v}$.} We already know $f(h(\gamma)) = \mathcal{F}(\gamma) \equiv 0 \pmod{M_v}$. If we also have $f'(h(\gamma)) \equiv 0 \pmod{M_v}$ as well, then $h(\gamma)$ is a repeated root of $f(x)$ modulo $M_v$. Since, $k_v$ is perfect, there exists a monic polynomial $g(x) \in R_v[x]$ irreducible modulo $M_v$ such that $g(x)^2$ divides $f(x)$ modulo $M_v$ and $g(h(\gamma)) \equiv 0 \pmod{M_v}$. 
Since $g_1(\gamma) \equiv 0 \pmod{M_v}$ and $g_1(x)$ is irreducible modulo $M_v$, it follows that $g_1(x)$ divides $g(h(x))$ modulo $M_v$. Since $g(x)^2$ divides $f(x)$ modulo $M_v$, we have $f(x) \in \langle M_v, g(x)^2 \rangle$. However, by our assumption that the extension $R_v[\alpha]$ is integrally closed, we have $f(x) \notin \langle M_v, g(x) \rangle^2$. Combining these two facts and applying Theorem~\ref{Thm:4.1}, it follows that $f(h(x)) \notin \langle M_v, g(h(x)) \rangle^2$. Using these observations along with Lemma~\ref{lem:5.14}, we deduce that $\cF(x) = f(h(x)) \notin \langle M_v, g_1(x) \rangle^2$. This contradicts our initial assumption for Case II. Thus, $f'(h(\gamma)) \not\equiv 0 \pmod{M_v}$.\\[0.5mm]
\noindent \textit{\textbf{Subcase 2.3}: $m\gamma + a(m-1) \equiv 0 \pmod{M_v}$.} If $v(m)\neq 0$, then $m\in M_v=\langle\pi\rangle$. Hence, $a(m-1)\in M_v$. Since $M_v$ is a prime ideal, it follows that either $a\in M_v$ or $m-1\in M_v$. On the other hand
$\min\{v(a),v(m)\}=0$ and $v(m)\neq 0$, which implies that
$v(a)=0$. Thus, $a\notin M_v$ and consequently, $m-1\in M_v$, which is a contradiction. Therefore, $v(m)$ must be zero in this subcase and hence $m$ is a unit in  $R_v$. In particular, $m$ is invertible modulo $M_v$ and we may define an element
\[
\eta=\frac{-a(m-1)}{m}\in R_v.
\] 
Thus, $\gamma \equiv \eta \pmod{M_v}$. Hence, the minimal polynomial of $\gamma$ modulo $M_v$ is $g_1(x)\equiv x-\eta \pmod{M_v}$. We now need to determine whether $\mathcal{F}(x)\in
\langle M_v,x-\eta\rangle^2$. Since $x-\eta$ is a repeated factor of $\mathcal{F}(x)$ modulo $M_v$, we already have $\mathcal{F}(x)\in \langle M_v,(x-\eta)^2\rangle$. Therefore, by
Proposition~$\ref{lem:ideal_split}$, $\mathcal{F}(x)\in
\langle M_v,x-\eta\rangle^2$ if and only if $\mathcal{F}(x)\in
\langle M_v^2,x-\eta\rangle$. Using polynomial division in $R_v[x]$, we can write
$\mathcal{F}(x)=(x-\eta)Q(x)+\mathcal{F}(\eta)$. Consequently,
$\mathcal{F}(x)\in\langle M_v^2,x-\eta\rangle$ if and only if
$\mathcal{F}(\eta)\in M_v^2$. Note that $\mathcal{F}(\eta)=f(h(\eta))$.
By our hypothesis, $m^{mn}f(h(\eta))\notin M_v^2$. Since $m$ is a unit in $R_v$, $m^{mn}$ is also a unit. Thus, the above condition is equivalent to $f(h(\eta))\notin M_v^2$ yielding $\mathcal{F}(\eta)\notin M_v^2$. Consequently, $\mathcal{F}(x)\notin\langle M_v,x-\eta\rangle^2 =\langle M_v,g_1(x)\rangle^2$, contradicting our assumption that $\mathcal{F}(x)\in\langle M_v,g_1(x)\rangle^2$.\\[1mm]
Since all three subcases lead to a contradiction, no such factor $g_1(x)$ can exist. Therefore, $\mathcal{F}(x) \notin \langle M_v, g(x) \rangle^2$ for any repeated factor $g(x)$ modulo $M_v$, and we conclude that $R_v[\theta]$ is integrally closed. This completes the proof.
\end{proof}

\begin{proposition}\label{Pro:4.14}
    Suppose  $M_v$ is a principal ideal generated by $\pi$ and  the residue field $k_v$ is a perfect field of prime characteristic $p > 0$. Let $f(x) \in R_v[x]$ be a monic polynomial of degree $n$,  $h(x) = x^m + ax^{m-1} + b \in R_v[x]$ be a monic polynomial such that $a,m \in M_v$ and the composition $\mathcal{F}(x)=f(h(x))$ is irreducible and separable over $F$. Let $\theta$ be a root of $\mathcal{F}(x)$ and $u \geq 1$ be such that $m = p^u \ell$ where $p \nmid \ell$. Let $b' \in R_v$ be such that $(b')^{p^u} \equiv b \pmod{M_v}$. Then the simple ring extension $R_v[\theta]$ is integrally closed if and only if the polynomials 
    \[ f(x^{\ell} + b') \text{ and } \frac{1}{\pi} \left( f(x^m + b) + ax^{m-1} f'(x^m + b) - f(x^{\ell} + b')^{p^u} \right) \text{ are coprime modulo } M_v. \]
    Further, if we assume that $a\in M_v^2$. Then the simple ring extension $R_v[\theta]$ is integrally closed if and only if $R_v[\eta]$ is integrally closed, where $\eta$ is a root of $f(x^p+b)$
\end{proposition}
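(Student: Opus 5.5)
Since $a,m\in M_v$ and $\operatorname{char} k_v=p$, the reduction satisfies
\[
\overline{h}(x)=x^m+\overline b=\bigl(x^{\ell}+\overline{b'}\bigr)^{p^u}.
\]
Hence
\[
\overline{\mathcal F}(x)=\overline f\bigl(x^{\ell}+\overline{b'}\bigr)^{p^u}
\]
in $k_v[x]$. Because $u\ge1$, every irreducible factor of $\overline{\mathcal F}$ is repeated. I would therefore apply the Generalized Dedekind Criterion (case (ii), with $M_v=(\pi)$) to $G=\mathcal F$, taking the lift $g_1^{e_1}\cdots g_t^{e_t}$ to be $f(x^{\ell}+b')^{p^u}$. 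Strictly, I would choose a monic factorization $f(x^{\ell}+b')\equiv\prod g_i^{d_i}$ modulo $M_v$ and set $e_i=p^ud_i$. The product $\prod g_i^{e_i}$ differs from $f(x^\ell+b')^{p^u}$ by an element of $\pi R_v[x]$. The point to check is that this change of lift alters $Y$ only by a multiple of polynomials that are themselves divisible by each $\overline g_i$. Since all $e_i\ge 2$, this reduces the integral-closedness condition to: no $\overline g_i$ divides
\[
\overline{Y}=\overline{\tfrac1\pi\bigl(\mathcal F(x)-f(x^\ell+b')^{p^u}\bigr)}.
\]
Equivalently, $\overline Y$ is coprime to $\overline f(x^\ell+\overline{b'})$.

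Next I would rewrite $\mathcal F(x)$ to match the displayed polynomial. By Taylor expansion,
\[
f(x^m+b+ax^{m-1})=f(x^m+b)+ax^{m-1}f'(x^m+b)+a^2x^{2m-2}(\cdots).
\]
This gives
\[
\mathcal F-f(x^\ell+b')^{p^u}=\Bigl[f(x^m+b)+ax^{m-1}f'(x^m+b)-f(x^\ell+b')^{p^u}\Bigr]+a^2(\cdots).
\]
The main subtlety is whether $a^2/\pi$ lies in $M_v$. If $v(a)\ge v(\pi)$, it does, since $a^2\in M_v^2$. I also need the bracket to lie in $\pi R_v[x]$, which holds because its reduction is zero. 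So the first claim follows, with the displayed polynomial equal to $Y$ modulo $M_v$. I expect the main obstacle to be exactly this bookkeeping, together with the independence from the choice of lift. For the latter I would use the standard observation that replacing a lift changes $Y$ modulo $M_v$ by a multiple of $\prod\overline g_i^{e_i-1}$, which is divisible by each $\overline g_i$.

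For the second statement, assume $a\in M_v^2$. Then $ax^{m-1}f'(x^m+b)/\pi\in M_v[x]$, and the criterion compares $f(x^\ell+b')$ with $\tfrac1\pi\bigl(f(x^m+b)-f(x^\ell+b')^{p^u}\bigr)$. This says precisely that $R_v[\theta_0]$ is integrally closed, where $\theta_0$ is a root of $f(x^m+b)$. In other words, the condition no longer depends on $a$; equivalently, one can invoke Theorem~\ref{valuation dede} and observe that $\mathcal F\equiv f(x^m+b)$ modulo $M_v^2$. Membership in $\langle\pi,g\rangle^2\supseteq M_v^2$ is unaffected by such a change. It then remains to show that $f(x^m+b)$ and $f(x^p+b)$ give the same answer.

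To compare them, write $x^m+b=(x^{m/p})^p+b$ and use Theorem~\ref{Thm:4.1} with $h(x)=x^{m/p}$. Here I apply it to $f(x^p+b)$, whose reduction modulo $M_v$ is a $p$-th power, so it lies in $\langle\pi,g^2\rangle$ for each of its irreducible factors $g$. This yields $f(x^p+b)\in\langle\pi,g\rangle^2$ if and only if $f(x^m+b)\in\langle\pi,g(x^{m/p})\rangle^2$. Lemma~\ref{lem:5.14} and Lemma~\ref{lemma:3.6} then handle the passage to the irreducible factors of $\overline g(x^{m/p})$, with a converse via the substitution direction. Combining these with Theorem~\ref{valuation dede} gives the equivalence with $R_v[\eta]$ being integrally closed.
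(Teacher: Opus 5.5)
Your proposal follows the paper's proof. Dedekind's criterion for $\mathcal F$, combined with the Taylor expansion $\mathcal F=f(x^m+b)+ax^{m-1}f'(x^m+b)+a^2(\cdots)$, gives the coprimality criterion; for $a\in M_v^2$, the comparison of $f(x^m+b)$ with $f(x^p+b)$ goes through Theorem~\ref{Thm:4.1} with $x\mapsto x^{m/p}$ plus Lemma~\ref{lem:5.14}, and your explicit check of the lift $f(x^\ell+b')^{p^u}$ in $Y$ is tidier than the paper's appeal to a congruence modulo $M_v^2$.

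One caveat, shared with the paper and built into the statement: the step $\overline{\mathcal F}=\overline f(x^\ell+\overline{b'})^{p^u}$ uses $\overline f\bigl(y^{p^u}\bigr)=\overline f(y)^{p^u}$, which requires the coefficients of $\overline f$ to be fixed by $c\mapsto c^{p^u}$. This is automatic when $k_v=\mathbb{F}_p$, as in every application over $\mathbb{Q}$; for a general perfect $k_v$, replace $f$ in $f(x^\ell+b')$ by a monic lift of the polynomial whose coefficients are the $p^u$-th roots of those of $\overline f$, and your argument then goes through unchanged.
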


\begin{proof}
Define the polynomial $\tilde{f}(x) = f(x^p+b)$. Suppose the reduction of $f(x)$ modulo $M_v$ factors as 
$$ f(x) \equiv \prod_{i=1}^t g_i(x)^{e_i} \pmod{M_v}, $$
where $g_i(x)$'s are irreducible modulo $M_v$ and $e_i\geq 1$.
Since the residue field is perfect and of characteristic $p$, there exists an element $b'' \in R_v$ such that $(b'')^p \equiv b \pmod{M_v}$. This allows us to write $x^p + b \equiv (x+b'')^p \pmod{M_v}$. Therefore, $\tilde{f}(x)$ can be factored modulo $M_v$ as
$$ \tilde{f}(x) \equiv \left( \prod_{i=1}^t g_i(x+b'')^{e_i} \right)^p \pmod{M_v}. $$
Let $\tilde{g}_i(x) = g_i(x+b'')$. Clearly $\tilde{g_i}(x)$ are irreducible modulo $M_v$ for all $1 \leq i \leq t$. Since this factorization is a $p$-th power, $\tilde{f}(x)$ inherently belongs to the ideal $\langle \pi, \tilde{g}_i(x)^2 \rangle$ for all $i$. For a fixed $i$ in the range $1 \leq i \leq t$, applying Theorem \ref{Thm:4.1} with the substitution $x \mapsto x^{\frac{m}{p}}$ (which is a valid polynomial substitution since $p$ divides $m$), we obtain that $\tilde{f}(x) \in \langle \pi, \tilde{g}_i(x) \rangle^2$ if and only if $\tilde{f}(x^{\frac{m}{p}}) \in \langle \pi, \tilde{g}_i(x^{\frac{m}{p}}) \rangle^2$. Expanding these compositions, this establishes that $f(x^p+b) \in \langle \pi, g_i(x+b'') \rangle^2$ if and only if $f(x^m+b) \in \langle \pi, g_i(x^{\frac{m}{p}}+b'') \rangle^2$.
Since the $f(x^m+b)$ can be written as $\left( \prod_{i=1}^t g_i(x^{\frac{m}{p}}+b'')^{e_i} \right)^p$ modulo $M_v$, we can apply Lemma \ref{lem:5.14} to $f(x^m+b)$ and then apply Theorem~\ref{valuation dede} (GDC) to both $f(x^p+b)$ and $f(x^m+b)$ to conclude that
\begin{equation} \label{eqn:3.16}
    R_v[\eta] \text{ is integrally closed if and only if } R_v[\beta] \text{ is integrally closed,}
\end{equation}
where $\beta$ is a root of $f(x^m+b)$. Next, using  Taylor expansion and the fact that $a \in M_v$, we can express the composition $\mathcal{F}(x)$ as:
\begin{equation} \label{eqn:3.17}
    \mathcal{F}(x) = f(x^m+ax^{m-1}+b) = f(x^m+b) + a x^{m-1} f^{\prime}(x^m+b) + \pi^2 H(x)
\end{equation}
for some polynomial $H(x) \in R_v[x]$. The term $\pi^2 H(x)$  arises because all other terms in the expansion involve $a^2$ and since $a \in M_v$, it follows that $a^2 \in M_v^2$. Using the factorization of $f(x)$ modulo $M_v$, we obtain
$$ \mathcal{F}(x) \equiv f(x^m+b) \equiv \left( \prod_{i=1}^{t} g_i(x^{\ell}+b')^{e_i} \right)^{p^u} \pmod{M_v}.$$
Applying the Generalized Dedekind Criterion and Lemma \ref{lem:5.14}, the extension $R_v[\theta]$ is integrally closed if and only if $\mathcal{F}(x) \notin \langle \pi, g_i(x^{\ell}+b') \rangle^2$ for all $i \in \{1,2,\dots,t\}$. In view of equation \eqref{eqn:3.17}, 
this holds if and only if 
\[
    f(x^m+b) + ax^{m-1} f^{\prime}(x^m+b) \notin \langle \pi, g_i(x^{\ell}+b') \rangle^2 \quad \text{for all } i \in \{1,2,\dots,t\}.
\]
Combining the above observation along with the fact that $f(x^{\ell}+b') \equiv \prod_{i=1}^t g_i(x^{\ell}+b')^{e_i} \pmod{M_v^2} $, we obtain that $R_v[\theta]$ is integrally closed if and only if the polynomials $$\frac{1}{\pi} \left( f(x^m + b) + ax^{m-1} f'(x^m + b) - f(x^{\ell} + b')^{p^u} \right)$$ and $f(x^{\ell} + b')$are coprime modulo $M_v$. 

Further, if we assume $a \in M_v^2$. Then the linear term $ax^{m-1} f^{\prime}(x^m+b)$ completely lies in the ideal $M_v^2$. Consequently, this term  belongs to the ideal $\langle \pi, g_i(x^{\ell}+b') \rangle^2$. The condition then simplifies to $R_v[\theta]$ is integrally closed if and only if $f(x^m+b) \notin \langle \pi, g_i(x^{\ell}+b') \rangle^2$ for all $i \in \{1,2,\dots,t\}$.

Applying the  Generalized Dedekind Criterion and Lemma \ref{lem:5.14} to this simplified condition, we see that $R_v[\theta]$ is integrally closed if and only if the extension $R_v[\beta]$ is integrally closed. Using this with the intermediate equivalence established in \eqref{eqn:3.16}, we finally deduce that $R_v[\theta]$ is integrally closed if and only if $R_v[\eta]$ is integrally closed whenever $a \in M_v^2$. This completes the proof.
\end{proof}

Our next theorem establishes criterion for monogenity of composition over any ring, which when restricted to the field $\Q$ establishes Theorem \ref{thm:2.1}. For the sake of completeness of the below mentioned theorem, we restate all the notations and terminology once again in the statement of the theorem.

\begin{theorem}\label{VT2.1}
Let $v$ be a Krull valuation of a field $F$ with valuation ring $R_v$ and maximal ideal $M_v = \langle \pi \rangle$. Assume that the residue field $R_v/M_v$ is perfect and has prime characteristic $p > 0$. Let $f(x) \in R_v[x]$ be a monic polynomial of degree $n$ and  $h(x) = x^m + a x^{m-1} + b \in R_v[x]$ be a monic polynomial. Let $t \ge 1$ be an integer. Suppose the composition $\mathfrak{F}(x)=f(h(x^t))$ is irreducible and separable over $F$ along with the assumption $v(\D_{\fF}) > 0$. Let $\Theta$ be a root of $\fF(x)$ and  $\alpha$ be a root of $f(x)$. Then the simple ring extension $R_v[\Theta]$ is integrally closed if and only if all of the following conditions hold:
\begin{enumerate}
    \item[\textup{(1)}] $R_v[\alpha]$ is integrally closed;
    \item[\textup{(2)}] $f(b) \notin M_v^2$ whenever $m \geq 3$ or $a f'(b) \in M_v$;
    \item[\textup{(3)}] If $\min(v(a), v(m)) = 0$, then $m^{mn}\,f\left(h\left(\frac{-a(m-1)}{m}\right)\right) \notin M_v^2$;
    \item[\textup{(4)}] If $v(m)>0$ and $v(a)>1$, then $R_v[\beta]$ is integrally closed, where $\beta$ is a root of $f(x^p+b)$;
   \item[\textup{(5)}]
If $v(m)>0$ and $v(a)=1$, then
$f(x^\ell+b')$ and
\[
\frac{1}{\pi} \left( f(x^m+b)+ax^{m-1}f'(x^m+b) - f(x^\ell+b')^{p^u} \right)
\]
are coprime modulo $M_v$, where $m=p^u\ell$ and
$(b')^{p^u}\equiv b\pmod{M_v}$;
    \item[\textup{(6)}] If $p \mid t$, then $R_v[\gamma]$ is integrally closed, where $\gamma$ is a root of $f(h(x^p))$.
\end{enumerate}
\end{theorem}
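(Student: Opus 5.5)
We reduce Theorem~\ref{VT2.1} to the propositions already proved for the case $t=1$, treating the power substitution $x\mapsto x^t$ separately. Write $t=p^s t'$ with $p\nmid t'$. The proof has two parts: first remove the factor $t'$, then handle the $p$-part of $t$.

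\textbf{Step 1 (the prime-to-$p$ part).} We show that, for $p\nmid t'$, $R_v[\Theta]$ is integrally closed if and only if $R_v[\Theta^{t'}]$ is. We compare $\fF(x)=G(x^{t'})$ with $G(x)=f(h(x^{p^s}))$ via Theorem~\ref{valuation dede}. The derivative of $x^{t'}$ is $t'x^{t'-1}$, which is a unit multiple away from $x=0$. So away from the factor $x$, each irreducible factor $\overline g$ of $\overline G$ pulls back to a squarefree product of factors of $\overline g(x^{t'})$. Lemma~\ref{lem:5.14} together with Theorem~\ref{Thm:4.1} (applied with $h(x)=x^{t'}$) then transfers the condition ``$\notin\langle\pi,g\rangle^2$'' in both directions. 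The factor $x$ has to be treated on its own, because $\fF(0)=f(b)$. Since $x^{t'}$ has $x$ as a repeated factor whenever $t'\ge 2$, this forces condition~(2) in the form $f(b)\notin M_v^2$; it does not create a new condition. The upshot is that $t'$ contributes nothing beyond conditions (1)--(5).

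\textbf{Step 2 (the $p$-part of $t$).} If $p\mid t$, the necessity of (6) follows from Proposition~\ref{pro:5.13}. Indeed, $f(h(x^t))=f(h(x^p))\circ x^{t/p}$, so if $R_v[\gamma]$ is not integrally closed then neither is $R_v[\Theta]$. For sufficiency, write $\fF(x)=f(h(x^p))\circ x^{t/p}$. Modulo $M_v$ the polynomial $x^{t/p}$ becomes a $p$-power composed with a separable map. Lemma~\ref{lemma:3.6} and Corollary~\ref{Cor:4.2} show that raising to $p$-th powers inside the argument preserves the condition ``$\notin\langle\pi,g\rangle^2$'' once $f(x)\in\langle\pi,g(x)^2\rangle$. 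This lets us reduce from $x^{t}$ to $x^p$ by iterating. So, given (6), together with (1) and (2) for the factor $x$, $R_v[\Theta]$ is integrally closed.

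\textbf{Step 3 (the base case $t=1$).} With Steps~1--2 in hand, we are reduced to $f(h(x))$. We split according to $\min(v(a),v(m))$.
\begin{itemize}
\item If $\min(v(a),v(m))=0$, Proposition~\ref{Prop:4.13} gives exactly conditions (1)--(3). Conditions (4) and (5) are then vacuous.
\item If $v(m)>0$ and $v(a)>0$, Proposition~\ref{Pro:4.14} applies. The case $v(a)>1$ (that is, $a\in M_v^2$) gives condition (4), and the case $v(a)=1$ gives the coprimality condition (5). We must also show that (1) and (2) follow automatically from (4)/(5), or are necessary anyway. Their necessity comes from Propositions~\ref{pro:5.13} and~\ref{pro:5.7}. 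For their automatic validity: a failure of (1) or (2) makes $R_v[\theta]$ non-integrally closed, so it would contradict (4)/(5) by the equivalence in Proposition~\ref{Pro:4.14}.
\end{itemize}
The hypothesis $v(\D_\fF)>0$ is used only to ensure that some repeated factor exists. Otherwise everything holds trivially.

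\textbf{Main obstacle.} The delicate step is Step~1: controlling the factor $x$ and the interaction of $x^{t}$ with the repeated factors of $f(h(x))$ without factoring $g(h(x^t))$ explicitly. I would attempt it by applying Lemma~\ref{lem:5.14} with $h$ replaced by $h(x^t)$, which avoids any explicit factorization. I would then check carefully that Theorem~\ref{Thm:4.1} still works when $g(h(x^{t}))$ is reducible modulo $M_v$. A possible gap is that Theorem~\ref{Thm:4.1} is stated only for $g$ irreducible. Its proof, however, uses only that $g\circ h$ is monic, so it applies in this setting.
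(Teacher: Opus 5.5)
Your overall architecture matches the paper's. You derive necessity and the case $t=1$ from Propositions~\ref{pro:5.13}, \ref{pro:5.7}, \ref{Prop:4.13} and~\ref{Pro:4.14}, reduce the $p$-part of $t$ to condition (6), and handle the prime-to-$p$ part of $t$ by a Dedekind-criterion comparison of $\fF(x)=G(x^{t'})$ with $G$. The paper does that last step by applying Proposition~\ref{Prop:4.13} to $f\circ h$ with inner polynomial $x^t$ and $a=b=0$.

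The genuine gap is your claim in Step~1 that the factor $x$ ``does not create a new condition.'' Suppose $t'\ge 2$ and $f(b)\in M_v$. Then $x$ is a repeated factor of $\overline{\fF}$, and $\fF$ has no linear term. So $\fF\in\langle\pi,x\rangle^2$ exactly when $f(b)\in M_v^2$, and you need $f(b)\notin M_v^2$. Condition (2) supplies this only when $m\ge 3$ or $af'(b)\in M_v$. When $m=2$ and $af'(b)\notin M_v$, $x$ is a \emph{simple} factor of $\overline{f(h(x))}$, and none of (1)--(6) controls $v(f(b))$ if $p\nmid t$. Yet the substitution $x\mapsto x^{t'}$ turns $x$ into a repeated factor. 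The paper's proof makes the same unjustified step (``$\mathcal{F}(0)=f(b)\notin M_v^2$ by condition (2)''). In fact the statement is false in this case, so no argument can close the gap as written.

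For a counterexample, take $R_v=\Z_{(3)}$, $f(x)=x$, $h(x)=x^2+7x+9$ and $t=2$, so $\fF(x)=x^4+7x^2+9$. This polynomial is irreducible, and $\D_{\fF}=16\cdot 9\cdot(49-36)^2=2^4\cdot 3^2\cdot 13^2$ has positive $3$-adic valuation. The conditions all hold:
\begin{itemize}
\item (1), (4), (5) and (6) are trivial or vacuous;
\item (2) is vacuous because $af'(b)=7$ is a unit;
\item (3) holds because $m^{mn}f\bigl(h(-\tfrac{7}{2})\bigr)=-13\notin M_v^2$.
\end{itemize}
However, $\overline{\fF}=x^2(x^2+1)$ over $\F_3$, and $\fF\in\langle 3,x\rangle^2$. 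Explicitly, $z=\Theta(\Theta^2+1)/3\notin\Z_{(3)}[\Theta]$ satisfies $z^2=3\Theta^2+5$, so $\Z_{(3)}[\Theta]$ is not integrally closed.

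The same polynomial also satisfies every condition of Theorem~\ref{thm:2.1}, including the one at $p=2$, yet it is not monogenic. Its field is $\Q(i,\sqrt{13})$, of discriminant $2^4\cdot 13^2$, so its index is $3$.

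The fix is to strengthen (2) to ``$f(b)\notin M_v^2$ whenever $m\ge 3$, or $af'(b)\in M_v$, or $t\ge 2$.'' When $p\mid t$ this is already forced by (6). With that amendment, your Step~1 argument goes through as written.
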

\begin{proof}
    Suppose $R_v[\Theta]$ is integrally closed and let $\theta$ be a root of $\cF(x)= f(h(x))$. It follows from Proposition~\ref{pro:5.13} that $R_v[\alpha]$, $R_v[\gamma]$ and $R_v[\theta]$ are integrally closed. In particular, conditions~(1) and (6) holds. Using Proposition 
    \ref{Pro:4.14}, conditions (4) and (5) holds true. Further, using Proposition \ref{Prop:4.13} along with the arguments used in it's proof, (3) holds. Lastly, Proposition \ref{pro:5.7} and the fact that $R_v[\theta]$ is integrally closed implies that (2) holds true.

{}

Conversely, assume that conditions~(1)--(6) holds. We first show that $R_v[\theta]$ is integrally closed. 
If $\min(v(a), v(m)) = 0$, then conditions~(1), (2) and~(3), together with Proposition~\ref{Prop:4.13}, imply that $R_v[\theta]$ is integrally closed. On the other hand, if $\min(v(a), v(m)) > 0$, then conditions~(4) and~(5), together with Proposition~\ref{Pro:4.14}, again yield that $R_v[\theta]$ is integrally closed. Hence, the simple extension generated by $\theta$ over $R_v$ is indeed integrally closed. 

We now proceed to prove that $R_v[\Theta]$ is integrally closed. 
If $v(t)=0$, setting $m=t$, $a=b=0$, $f = \cF$ and $\cF = \fF$ in Proposition \ref{Prop:4.13}, we have $\min \{ v(t), v(a=0) \} = 0$. Thus, we can apply Proposition \ref{Prop:4.13} to conclude that $R_v[\Theta]$ is intagrally closed if and only if $R_v[\theta]$ is integrally closed, $\cF(0) \notin M_v^2$ and $t^{tmn} \cF((-0(t-1)/t)^t) \notin M_v^2$. We have already proved that $R_v[\theta]$ is integrally closed. Note that $\cF(0) = f(h(0)) = f(b) \notin M_v^2$ by condition (2). Also, $t^{tmn} \cF((-0(t-1)/t)^t) = t^{tmn} f(h(0)) = t^{tmn} f(b) \notin M_v^2 $ using condition (2) and $v(t) = 0$. Thus, we have proved that $R_v[\Theta] $ is integrally closed whenever whenever $v(t) = 0$.

Now suppose that $v(t)>0$, then we have $p \mid t$. Write $t = p^u \ell$ where $p \nmid \ell$. Setting $m=t$, $a=b=0$ $f = \cF$ and $\cF = \fF$ in Proposition~\ref{Pro:4.14}, we have $a = 0 \in M_v^2$. Thus, applying Proposition \ref{Pro:4.14} we conclude that $R_v[\Theta]$ is integrally closed if and only if the simple extension generated by the root of $\cF(x^p) = f(h(x^p))$ is integrally closed, i.e., $R_v[\gamma]$ is integrally closed, which holds by assumption (6). Therefore, the ring $R_v[\Theta]$ is integrally closed irrespective of whether $v(t) = 0$ or $v(t) \neq 0$. This completes the proof of the converse and hence the theorem.
\end{proof}

\section{Proof over rational number} \label{sec:proof over rationals}
We begin with a simple observation that will be used throughout this section. Whenever we apply a result from Section~\ref{sec:monogenity of composition}, we will use the fact that the two conditions $R_v[\theta]$ is integrally closed and $p\nmid [\mathbb{Z}_K:\mathbb{Z}[\theta]]$ are equivalent. To see this, suppose that $R_v=\mathbb{Z}_{(p)}$ with the standard $p$-adic valuation. The integral closure of $\mathbb{Z}_{(p)}[\theta]$ in $K$ is the localized ring of integers $(\mathbb{Z}_K)_{(p)}$. Thus, $\mathbb{Z}_{(p)}[\theta]$ is integrally closed if and only if $\mathbb{Z}_{(p)}[\theta]=(\mathbb{Z}_K)_{(p)}$. On the other hand, localizing the finite quotient $\mathbb{Z}_K/\mathbb{Z}[\theta]$ at $p$ keeps only its $p$-primary part. This localized quotient is trivial if and only if $p$ does not divide the order of the global quotient, namely the index $[\mathbb{Z}_K:\mathbb{Z}[\theta]]$. Therefore, $R_v[\theta]$ is integrally closed if and only if $p\nmid[\mathbb{Z}_K:\mathbb{Z}[\theta]]$.


\begin{proof}[Proof of Theorem~\ref{thm:2.6}]
We prove it by induction on $N$. We first consider the case $N=1$. By Corollary~\ref{T2.1}, the polynomial $f\circ H$ is monogenic if and only if the following conditions hold:
\begin{enumerate}
    \item $f(x)$ is monogenic;
    \item $f(b) = (f\circ H)(0)$ is square-free whenever $m>2$. Further, if $m=2$ then $p^2 \nmid (f\circ H)(0)$ whenever $p \mid a f'(H(0))$;
    \item $p^2\nmid m^{mn}\,f\!\left(h\!\left(\frac{-a(m-1)}{m}\right)\right)$ whenever $p\nmid\gcd(a,m)$;
    \item $p\nmid\operatorname{ind}(f(x^p+b))$ for every prime $p\mid\gcd(a,m)$;
    \item $p\nmid\operatorname{ind}(f(h(x^p)))$ for every prime $p\mid t$.
\end{enumerate}
We first simplify the fourth condition. Let $p\mid\gcd(a,m)$. By hypothesis, every irreducible factor of $f(x)$ over $\mathbb{F}_p$ has multiplicity at least two. Hence,
\[
f(x)\equiv \prod_{i=1}^{r} g_i(x)^{e_i}\pmod p,
\]
where each $g_i(x)$ is irreducible over $\mathbb{F}_p$ and $e_i\ge2$. Consequently, $f(x)\in\langle p,\,g_i(x)^2\rangle$ for every $1\le i\le r.$ Applying Lemma~\ref{lemma:3.6} with the translation $h(x)=x+b$, we obtain $f(x^p+b)\in\langle p,\,g_i(x+b)\rangle^2$ if and only if $f(x^p)\in\langle p,\,g_i(x)\rangle^2$. Using this equivalence, Theorem \ref{valuation dede} along with the observation that $g_i(x)$ is irreducible over $\F_p$ implies $g_i(x+b)$ is irreducible over $\F_p$, we obtain
\[
p\nmid\operatorname{ind}(f(x^p+b)) \iff p\nmid\operatorname{ind}(f(x^p)) \textup{ for every prime } p \mid \gcd(a,m).
\]
Similarly for a prime divisor $p$ of $t$, applying Lemma~\ref{lemma:3.6} with the polynomial $h(x)$ yields  $f(h(x^p))\in\langle p,\,g_i(h(x))\rangle^2$ if and only if $f(x^p)\in\langle p,\,g_i(x)\rangle^2$. Using this observation, we apply Theorem \ref{valuation dede} and the remark followed by Lemma \ref{lem:5.14} to conclude that  $p \nmid \operatorname{ind} (f(h(x^p)))$ if and only if $p \nmid \operatorname{ind} (f(x^p))$ for every prime $p \mid t$. Hence, the index conditions appearing in $(4)$ and $(5)$ are equivalent to the single condition $p$ does not divide index of $f(x^p)$ for every $p\mid \kappa.$ Now suppose that the result holds for $N-1$. We prove it for $N$. By Corollary~\ref{T2.1}, the polynomial $f\circ H^N$ is monogenic if and only if the following conditions are satisfied:
\begin{enumerate}
    \item $(f\circ H^{N-1})(x)$ is monogenic;
    \item $(f\circ H^N)(0)$ is square-free whenever $m>2$. Further, if $m=2$ then $p^2 \nmid (f\circ H^N)(0)$ whenever $p \mid a (f\circ H^{N-1})'(H(0))$;
    \item \( p^2\nmid m^{mn(tm)^{N-1}} (f\circ H^{N-1})
    \!\left( h\!\left(\frac{-a(m-1)}{m}\right) \right)  \)
    whenever $p\nmid\gcd(a,m)$;
    \item $p\nmid\operatorname{ind} \bigl((f\circ H^{N-1})(x^p+b)\bigr)$ for every prime $p\mid\gcd(a,m)$;
    \item $p\nmid\operatorname{ind} \bigl((f\circ H^{N-1})(h(x^p))\bigr)$ for every prime $p\mid t$.
\end{enumerate}
Following the arguments similar to those used in the case $N=1$, it easily follows that the index conditions in (4) and (5) are respectively equivalent to $p$ does not divide the index of $(f\circ H^{N-1})(x^p)$ for every prime $p\mid t\gcd(a,m)$. Again the use of arguments similar to those used for primes $p \mid t$ in the $N=1$ case will yield that the above is equivalent to $p \nmid \operatorname{ind} f(x^p)$ for all primes $p \mid \kappa$. Therefore, the hypotheses of the induction step apply to the polynomial $f\circ H^{N-1}$. Combining the induction hypothesis with conditions (2) and (3), we conclude that $f\circ H^N$ is monogenic if and only if the stated conditions of the theorem hold. This completes the proof.
\end{proof}

\begin{proof}[Proof of Theorem~\ref{thm:2.8}]
Let $p$ be a prime divisor of $m$. By assumption, $p\mid \ell$, so we may write $\ell=p^us$, where $\gcd(p,s)=1.$ A simple Binomial expansion produces the equivalence
\[
F(x)=f(x^\ell)\equiv f(x^s)^{p^u} \pmod{p}.
\]
Hence every irreducible factor of $F(x)$ over $\mathbb{F}_p$ occurs with multiplicity at least two for each prime $p\mid m$. Applying Corollary~\ref{cor:2.7} to the polynomial $F(x)$, we deduce that $\mathcal{F}(x)$ is monogenic if and only if the following conditions hold:
\begin{enumerate}
    \item[(1)] $F(x)=f(x^\ell)$ is monogenic;
    \item[(2)] $F(b)=f(b^\ell)$ is square-free;
    \item[(3)] $p$ does not divide $\operatorname{ind}(F(x^p)) = \op{ind}(f(x^{p\ell}))$ for every prime $p\mid m$.
\end{enumerate}
We now analyze condition~(i). Applying Corollary~\ref{cor:2.7} once again, this time to $F(x)=f(x^\ell)$, shows that $F(x)$ is monogenic if and only if
\begin{enumerate}
    \item[(a)] $f(x)$ is monogenic;
    \item[(b)] $f(0)$ is square-free;
    \item[(c)] $p\nmid\operatorname{ind}(f(x^p))$ for every prime $p\mid\ell$.
\end{enumerate}
Let $p$ be a prime divisor of $m$. It is easy to note that every irreducible factor of $f(x^p)$ modulo $p$ has multiplicity at least two. Using arguments similar to those used in obtaining \eqref{eqn:3.16} with $\tilde{f}(x) = f(x^p)$ over there, we conclude $p\nmid\operatorname{ind}\!\bigl(f(x^{p\ell})\bigr)$ if and only if $p\nmid\operatorname{ind}\!\bigl(f(x^p)\bigr)$ for every $p\mid\ell.$ Substituting this equivalence into condition~(3) and combining it with conditions~(a) -- (c) and~(2), we obtain precisely the stated characterization. This completes the proof.
\end{proof}

The following theorem provide necessary and sufficient condition for monogenity of binomial, which will be used in proof of Theorem~\ref{thm:2.9}.
\begin{theorem}\cite[Theorem~1.3]{Jh-Kh}\label{thm:3.1}
Let $K=\mathbb{Q}(\theta)$ be a Kummer extension of $\mathbb{Q}$, where $\theta$ is a root of the irreducible polynomial $x^n-a \in \mathbb{Z}[x].$ Then the following statements are equivalent:
\begin{enumerate}
    \item[\textup{(i)}] $\mathbb{Z}_K=\mathbb{Z}[\theta]$.
    
    \item[\textup{(ii)}] $a$ is a square-free integer and, for every prime divisor $p$ of $n$, $ p^2 \nmid \bigl(a^p-a\bigr)$.
\end{enumerate}
\end{theorem}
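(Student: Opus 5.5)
The plan is to apply the Dedekind criterion (the case $R_v=\mathbb{Z}_{(p)}$ of the Generalized Dedekind Criterion, equivalently Theorem~\ref{valuation dede}) one prime at a time. As explained at the start of this section, $\mathbb{Z}_K=\mathbb{Z}[\theta]$ holds if and only if $\mathbb{Z}_{(p)}[\theta]$ is integrally closed for every prime $p$. By Theorem~\ref{Discriminant} with $f(x)=x-a$, $h(x)=x^n$ (or simply by the classical formula), $\D_{x^n-a}=\pm n^n a^{n-1}$. Every prime $p\nmid na$ therefore gives no condition, since $x^n-a$ is separable modulo $p$. It remains to treat primes dividing $a$ and primes dividing $n$ but not $a$; together these are exactly the primes in condition (ii).

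\textbf{Primes $p\mid a$.} Here $x^n-a\equiv x^n \pmod p$. If $n\ge 2$, the only irreducible factor is $x$, and it is repeated. The Dedekind remainder is $\frac{1}{p}\bigl((x^n-a)-x^n\bigr)=-a/p$. Hence $x$ divides it modulo $p$ if and only if $p^2\mid a$. Equivalently, $x^n-a\in\langle p,x\rangle^2$ iff its constant term lies in $p^2\mathbb{Z}_{(p)}$. So the condition at these primes is exactly $p^2\nmid a$. This covers the prime divisors of $\gcd(a,n)$ as well. For such $p$ the congruence $p^2\nmid a^p-a$ is automatic once $p\,\|\,a$, because $a^p-a\equiv -a \pmod{p^2}$. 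Thus the two formulations agree at these primes.

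\textbf{Primes $p\mid n$, $p\nmid a$.} First I reduce from $x^n-a$ to $x^p-a$. Take $f(x)=x-a$ and $h(x)=x^{n}$, so that in the notation of Proposition~\ref{Pro:4.14} we have $a=0\in M_v^2$ and $b=0$. The second part of Proposition~\ref{Pro:4.14} then says that $\mathbb{Z}_{(p)}[\theta]$ is integrally closed if and only if $\mathbb{Z}_{(p)}[\eta]$ is, where $\eta$ is a root of $x^p-a$. For this I need the irreducibility and separability hypotheses of the proposition, which hold because $x^n-a$ is irreducible over $\mathbb{Q}$.

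Next I compute directly for $x^p-a$. Since $a^p\equiv a \pmod p$, we have $x^p-a\equiv (x-a)^p \pmod p$, so the only repeated factor is $x-a$. Substituting $x=y+a$ (allowed by Proposition~\ref{pro3.7}), the Dedekind remainder becomes
\[
\frac{1}{p}\Bigl((y+a)^p-a-y^p\Bigr)=\sum_{j=1}^{p-1}\frac{1}{p}\binom{p}{j}a^{p-j}y^j+\frac{a^p-a}{p}.
\]
Its value at $y=0$ is $(a^p-a)/p$. Therefore $y\nmid$ remainder modulo $p$ if and only if $p^2\nmid a^p-a$. Combining the three classes of primes yields (i)$\Leftrightarrow$(ii).

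The main obstacle is the reduction from $x^n$ to $x^p$, i.e.\ checking that Proposition~\ref{Pro:4.14} (really Theorem~\ref{Thm:4.1} applied with the substitution $x\mapsto x^{n/p}$, followed by Lemma~\ref{lem:5.14}) applies cleanly when $p\nmid a$. Every irreducible factor of $x^p-a$ modulo $p$ is repeated, as that argument requires. If that route proves awkward, I would instead directly apply the Dedekind criterion to $x^n-a\equiv (x^{r}-c)^{p^s} \pmod p$, where $n=p^sr$ and $c^{p^s}\equiv a \pmod p$, and compare remainders via Lemma~\ref{lem:5.14}.
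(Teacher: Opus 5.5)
Your proof is correct. Note, however, that the paper gives no proof of this statement: it is quoted from Jhorar--Khanduja \cite{Jh-Kh} and used as a black box in the proof of Theorem~\ref{thm:2.9}. Your argument is therefore an independent derivation, and it holds up.

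\textbf{Case analysis.} The split into $p\nmid na$, $p\mid a$, and $p\mid n$ with $p\nmid a$ is exhaustive. For $p\mid a$, the condition $x^n-a\notin\langle p,x\rangle^2$ is exactly $p^2\nmid a$. Your remark that $a^p-a\equiv -a \pmod{p^2}$ when $p\,\|\,a$ correctly reconciles this with the form of (ii) at primes dividing $\gcd(a,n)$.

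\textbf{The reduction for $p\mid n$, $p\nmid a$.} This step is sound. Since $a^p\equiv a\pmod p$, you have $x^n-a\equiv (x^{n/p}-a)^p \pmod p$. Theorem~\ref{Thm:4.1} applies to $\tilde f(x)=x^p-a\in\langle p,(x-a)^2\rangle$ with $g(x)=x-a$ and the substitution $x\mapsto x^{n/p}$. Lemma~\ref{lem:5.14}, together with the trivial inclusion $\langle p,x^{n/p}-a\rangle\subseteq\langle p,g_1(x)\rangle$, then transfers the equivalence to every irreducible factor $g_1$ of $x^{n/p}-a$ modulo $p$. These are exactly the irreducible factors of $x^n-a$ modulo $p$. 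The final evaluation of the remainder at $y=0$, giving $(a^p-a)/p$, is right, including for $p=2$.

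\textbf{Comparison with the citation.} Your route makes the binomial criterion an internal consequence of Section~\ref{sec:monogenity of composition}: it is essentially Proposition~\ref{Pro:4.14} with $f(x)=x-a$ and $h(x)=x^n$, plus a one-line Dedekind computation. There is no circularity, since none of Theorem~\ref{Thm:4.1}, Lemma~\ref{lem:5.14} or Proposition~\ref{Pro:4.14} depends on Theorem~\ref{thm:3.1}. Citing \cite{Jh-Kh} only saves space.

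\textbf{Hypothesis $n\ge 2$.} Your treatment of primes $p\mid a$ uses $n\ge2$, and the statement genuinely needs it. For $n=1$, e.g.\ $x-4$, (i) holds trivially while (ii) fails. You should state explicitly that ``Kummer extension'' is being read as $n\ge 2$.
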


\begin{proof}[Proof of Theorem~\ref{thm:2.9}]
  We will prove by induction on $N.$ We first prove for $N=1$. Let $p$ be  a prime divisor of $m$. Then by hypothesis, $p\mid n$. Write $n=p^{\ell}s$, where $p\nmid s$. Then we have \[f(x)\equiv(x^{s}+a)^{p^{\ell}}\pmod p.\] Hence for every prime $p$, dividing $m$, every irreducible factor of $f(x)$ has multiplicity atleast $2$ over $\F_p$. By Corollary~\ref{cor:2.7}, we have $f\circ g$ is monogenic if and only if $f(x)$ is monogenic, $f(b)$ is square free and $p\nmid\operatorname{ind}f(x^p)$ whenever $p\mid m$. We now further reduce these condition to obtain the desired result. From Theorem~\ref{thm:3.1}, $f(x)$ is monogenic if and only if, $-a$ is square free, and for every $q\mid n$, $q^2\nmid ((-a)^q+a)$. Now we reduce the condition $p\nmid\operatorname{ind}f(x^p)$. Write $f(x^p)=x^{np}+a$, then again applying Theorem~\ref{thm:3.1}, we have $f(x^p)$ is monogenic if and only if, $-a$ is squre free, and $q^2\nmid ((-a)^q+a)$ whenever $q \mid {pn}$. Keeping in mind $\operatorname{rad}(m)\mid n$ and above arguments we get the desired results in $N=1$ case.
  
  Now suppose the result is true for $N=k-1$. We show that it holds for $N=k$. Using same argument as used in step $N=1$, we can show that for every prime divisor $p$ of $m$, any repeated factor of $(f\circ g^{k-1})(x)$ has multiplicity $2$ at least. Hence, by Corollary~\ref{cor:2.7}, $(f\circ g^{k})(x)$ is monogenic if and only if, $(f\circ g^{k-1})(x)$ is monogenic, $(f\circ g^{k-1})(b)=(f\circ g^k)(0)$ square free, and $p\nmid \operatorname{ind}( f\circ g^{k-1})(x^p)$ whenever $p\mid m$.  We first reduce the condition $p\nmid \operatorname{ind}( f\circ g^{k-1})(x^p)$ further. For each prime $p$ divides $m$, write $n=p^us$, where $p\nmid s$ and $x^s+a=\prod_{i=1}^rg_i(x)^{e_i}$, where $e_i\geq 1$ and each $g_i(x)$ are irreducible over $\F_p$. Then, we have 
  \[ f(x) \equiv (x^s+a)^{p^u} \equiv \left(\prod_{i=1}^rg_i(x)^{e_i}\right)^{p^u} \pmod{p}.\] 
  By above congruence, we conclude that $f(x)\in\langle p,\,g_i(x)^2\rangle$ for every $1\leq i\leq r$. Applying Lemma~\ref{lemma:3.6} with $h(x)=g^{k-1}(x)$, we deduce that for each $i\in\{1,2,\ldots,r\}$,
  \begin{equation}\label{eqn:3.1}
      f(x^p)\in\langle p,\, g_i(x)\rangle^2\iff (f\circ g^{k-1})(x^p)\in\langle p,\, g_i\circ g^{k-1}(x)\rangle^2.
  \end{equation}
  It is easy to observe that 
  \[(f\circ g^{k-1})(x^p)\equiv\left(\prod_{i=1}^r((g_i\circ g^{k-1})(x))^{e_i}\right)^{p^u+1}\pmod p.\]
  Thus any irreducible factor of $(f\circ g^{k-1})(x)$ is a factor of some $(g_i\circ g^{k-1})(x)$ for some $i\in\{1,2,\ldots,r\}.$ Thus, applying Theorem \ref{valuation dede} over $\Q$ and remark following Lemma \ref{lem:5.14}, we deduce
  \begin{equation}\label{eqn:3.2}
       p\nmid\operatorname{ind}(f\circ g^{k-1})(x^p) \iff (f\circ g^{k-1})(x^p)\notin\langle p,\, g_i\circ g^{k-1}(x)\rangle^2\quad \forall\, 1\leq i \leq r.
  \end{equation}
 Using the equivalence obtained in \eqref{eqn:3.1} and \eqref{eqn:3.2}, we apply Theorem \ref{valuation dede} to conclude that for every prime divisor $p$ of $m $, $p$ does not divide $ \operatorname{ind} f(x^p)$ if and only if $p$ does not divide $ \operatorname{ind}(f\circ g^{k-1})(x^p)$.
 
 Combining the above arguments, we conclude that $(f\circ g^k)(x^p)$ is monogenic if and only if $(f\circ g^{k-1})(x)$ is monogenic, $(f\circ g^k)(0)$ is square-free, and $p\nmid \operatorname{ind}\bigl(f(x^p)\bigr)$ for every prime $p\mid m$.
By the induction hypothesis, $(f\circ g^{k-1})(x)$ is monogenic if and only if $(f\circ g^i)(0)$ is square-free for all $0\le i\le k-1$ and $p^2 \nmid ((-a)^p+a)$ for every prime $p\mid m$. Combining these conditions with the criterion obtained above yields the desired result. This completes the proof.
\end{proof}
\section{Analytic results} \label{sec:analytic}
The following theorem proved by Granville \cite[Theorem 1]{granville} is the key ingredient in our proof of Theorem \ref{analytic}.
\begin{theorem} \label{thm:4.1}
	Suppose that $f(x) \in \Z[x]$, without any repeated roots. Let $ D=\gcd\{f(n): n \in \Z \}$, and select $D'$ to be the smallest divisor of $D$ for which $D/D'$ is square-free. If the $abc$-conjecture is true, then 
	\[ |\{ 1 \leq n \leq X : f(n)/D' \text{ is square-free.} \}| \sim \lambda_f X \]
	where $\lambda_f>0$ is a positive constant, which we determine as follows:
	\[ \lambda_f = \prod_{p \text{ prime}} \left( 1- \frac{\omega_f(p)}{p^{\,2+ \mathfrak{q}_p}} \right) \]
	where, for each prime $p$, we let $\mathfrak{q}_p$ be the largest power of $p$ which divides $D'$ and let $\omega_f(p)$ denote the number of integers $a$ in the range $1 \leq a \leq p^{\,2+ \mathfrak{q}_p}$ for which $f(a) \equiv 0 \pmod{p^2}$.
\end{theorem}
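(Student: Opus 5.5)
The plan is the classical two-step argument: an elementary sieve over small primes, and the $abc$-conjecture to control large square divisors. This theorem is quoted from Granville \cite{granville} and is not proved in the paper; what follows is a reconstruction of how I would prove it.

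\textbf{Normalisation.} First replace $f(n)$ by $f(n)/D'$. By the definition of $D'$, for every prime $p$ the condition that $f(n)/D'$ is not square-free at $p$ is the condition $p^{2+\mathfrak q_p}\mid f(n)$. This is a condition on $n$ modulo $p^{2+\mathfrak q_p}$, and by the definition of $\omega_f(p)$ it holds for exactly $\omega_f(p)$ residue classes. Only finitely many primes divide $D'$ or the discriminant of $f$. For every other $p$, Hensel's lemma gives $\omega_f(p)\le d=\deg f$.

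\textbf{Small primes.} Fix $\xi=\xi(X)\to\infty$ slowly. By the Chinese remainder theorem, the number of $n\le X$ with no bad prime $p\le\xi$ is
\[X\prod_{p\le\xi}\left(1-\frac{\omega_f(p)}{p^{2+\mathfrak q_p}}\right)+O\Big(\prod_{p\le\xi}p^{2+\mathfrak q_p}\Big).\]
This is $(\lambda_f+o(1))X$. The product converges absolutely because $\omega_f(p)\le d$ for almost all $p$, and $\lambda_f>0$ because no local factor vanishes: by the choice of $D'$, not every class is bad. It then remains to show that the number of $n\le X$ with $p^2\mid f(n)$ for some $p>\xi$ is $o(X)$.

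\textbf{Medium primes.} For $\xi<p\le X/\log X$, each such $p$ contributes at most $d(X/p^2+1)$ values of $n$. The sum is $O(X/\xi+X/\log^2X)=o(X)$. This part is unconditional.

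\textbf{Large primes.} Here I use $abc$ through Langevin's consequence, obtained via a Belyi map. Since $f$ has no repeated roots, $\operatorname{rad}(f(n))\gg_\varepsilon |n|^{d-1-\varepsilon}$. If $q^2\mid f(n)$, then $\operatorname{rad}(f(n))\le |f(n)|/q\ll X^d/q$. Comparing the two bounds gives $q\ll X^{1+\varepsilon}$, so no prime $q>X^{1+\varepsilon}$ can occur.

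\textbf{Main obstacle.} The window $X/\log X<q\le X^{1+\varepsilon}$ is the hard step. The naive bound of $O(d)$ values of $n$ per prime is too weak, since it gives about $X/\log X$ per dyadic block and there are $\asymp\log X$ such blocks. My first attempt would follow Granville and apply the $abc$-bound to the homogenised binary form $F(x,y)=y^{d}f(x/y)$, which gives $\operatorname{rad}(F(x,y))\gg\max(|x|,|y|)^{d-2-\varepsilon}$. I would then fix a large $M$ and group the $n$ having a large square divisor according to the nearest fractions $r/s$ with $s$ small. The aim is to show that all such $n$ lie on a lattice whose size is governed by the sharper radical bound, so that in total only $O(\varepsilon X)$ values of $n$ survive in this window. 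Letting $\varepsilon\to0$ and combining this with the small- and medium-prime estimates gives the asymptotic $\sim\lambda_fX$.
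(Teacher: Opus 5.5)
The paper does not prove this statement. It is quoted from Granville and used as a black box in the proof of Theorem~\ref{analytic}, so your sketch has to be judged on its own.

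Several parts are fine: the normalisation, the CRT sieve over $p\le\xi$, the positivity of $\lambda_f$, and the unconditional bound for medium primes. Two small remarks on them:
\begin{itemize}
\item You tacitly read $\omega_f(p)$ as counting residues with $f(a)\equiv 0 \pmod{p^{2+\mathfrak q_p}}$. That is the right reading: with the printed modulus $p^2$, the local factor vanishes whenever $p^2\mid D$.
\item In the large-prime step, compare $\operatorname{rad}(f(n))\gg n^{d-1-\varepsilon}$ with $|f(n)|/q\ll n^{d}/q$, not with $X^d/q$. Otherwise nothing follows for small $n$.
\end{itemize}

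\textbf{The gap.} The real problem is the step you label the main obstacle, and that is where all the difficulty of the theorem lies. The unconditional count $\sum_q d(X/q^2+1)$ stops working for primes $q$ beyond about $X$. The bound $q\ll X^{1+\varepsilon}$ alone does not control how many $n$ are affected. What you write for this range is an aim, not an argument. You never say which form is evaluated at which coprime pair, why $q^2$ divides that value, or how grouping by fractions $r/s$ produces a lattice and the count $O(\varepsilon X)$. Moreover, at $(n,1)$ the binary-form bound with exponent $d-2$ is weaker than the one-variable bound you already used. A ``sharper'' bound would need points of larger height, and you do not construct them. As written, the proof stops exactly where the theorem becomes nontrivial.

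\textbf{One way to close it.} Let $E$ be the finite set of positive integer differences of roots of $f$, so $|E|\le d(d-1)/2$.
\begin{enumerate}
\item For $h\ge 1$ with $h\notin E$, the polynomial $g_h(x)=f(x)f(x+h)$ has degree $2d$ and no repeated roots. The $abc$-conjecture (via Langevin) gives $\operatorname{rad}(g_h(n))\gg_{h,\varepsilon} n^{2d-1-\varepsilon}$, that is, $|g_h(n)|/\operatorname{rad}(g_h(n))\ll_h n^{1+\varepsilon}$.
\item Suppose $n,n+h\in(Y/2,Y]$, and $f(n)$, $f(n+h)$ are divisible by $q^2$, $q'^2$ with primes $q,q'>Y/2$ (discarding the $O(1)$ zeros of $f$). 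Then the ratio above is at least $qq'$, or at least $q^3$ if $q=q'$, hence exceeds $Y^2/4$. This is impossible once $Y\ge Y_0(H)$, for every $h\le H$ with $h\notin E$.
\item So in each block of $H$ consecutive integers in $(Y/2,Y]$, the $n$ having a square factor $q^2$ with $q>Y/2$ have pairwise differences in $E$. There are at most $|E|+1\le d^2$ of them, giving $\ll_d Y/H+1$ such $n$ in $(Y/2,Y]$.
\item Sum over dyadic $Y\le X$ and let $H\to\infty$; the total is $o(X)$. Every $n\le X$ with a prime $q>X$ satisfying $q^2\mid f(n)$ is counted here.
\end{enumerate}
Since $\sum_{\xi<q\le X}d(X/q^2+1)\ll X/\xi+X/\log X$ already handles every $q\le X$ unconditionally, this completes the proof. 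It also makes your separate cut-off at $X^{1+\varepsilon}$ unnecessary.
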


\begin{proof}[Proof of Theorem~\ref{analytic}]
We begin the proof with a simple observation that
$h(-a(m-1)/m) = b + \varepsilon/m^m$. Since $q \mid a$ and $m \ge 3$, we have
\( \nu_q(\varepsilon) = \nu_q\big(a^m(m-1)^{m-1}\big)
\ge m \ge 3. \) Moreover, since $\gcd(a,m)=1$ and $q\mid a$, we have $q\nmid m$.

Now we will show that $R_f$ is separable, has no fixed prime divisor and its $\gcd$ is squarefree. We can then apply Granville's theorem to obtain infinitely many integers $y$ for which $R_f(y)$ is squarefree. For such a $y$, setting $b=qy$ will give the squarefreeness conditions required in Corollary \ref{T2.1}. We first establish the following four claims.

\noindent \textbf{\underline{Claim 1}:} $R_f(y) \in \mathbb{Z}[y]$ and $R_f(y) \not\equiv 0 \pmod q$ for every $y \in \mathbb{Z}$. \\
Let $y$ be an integer and write
$f(qy) = \sum_{j=0}^n c_j q^j y^j$. Since $f$ is monic and $q$-Eisenstein, we have
$q\mid c_j$ for $0\le j\le n-1$ and $\nu_q(c_0)=1$. Thus, the constant term of $f(qy)/q$ is $c_0/q$, which is an integer not divisible by $q$. For $j\ge 1$, the coefficient $c_jq^j$ is divisible by $q^2$. Therefore,
\begin{equation*}
    \frac{f(qy)}{q} \in \mathbb{Z}[y]
    \quad \text{and} \quad
    \frac{f(qy)}{q} \equiv \frac{c_0}{q} \not\equiv 0 \pmod q.
\end{equation*}
For the second factor, expanding
$ \mathcal{N}_f(qy)
= \sum_{j=0}^n c_j(qm^m y+\varepsilon)^j m^{m(n-j)} $
shows that for $z>0$, each term in the coefficient of $y^z$ contains the factor $c_jq^z(m^m)^z$. Hence, all non-constant terms are divisible by $q^2$. The constant term is
$\mathcal{N}_f(0)
= \sum_{j=0}^n c_j\varepsilon^j m^{m(n-j)}$.
Since $q\nmid m$, we have
$\nu_q(c_0m^{mn})=1$. For $1\le j\le n-1$,
$\nu_q\big(c_j\varepsilon^j m^{m(n-j)}\big)
\ge 1+3=4$, while for $j=n$, $\nu_q(\varepsilon^n)\ge 3n\ge 6$.
It follows that $\nu_q(\mathcal{N}_f(0))=1$. Hence,
\begin{equation*}
    \frac{\mathcal{N}_f(qy)}{q} \in \mathbb{Z}[y]
    \quad \text{and} \quad
    \frac{\mathcal{N}_f(qy)}{q}
    \equiv \frac{\mathcal{N}_f(0)}{q}
    \not\equiv 0 \pmod q.
\end{equation*}
Therefore, $R_f(y)\in\mathbb{Z}[y]$ has degree $2n$ with leading coefficient
$q^{2n-2}m^{mn}$. Moreover, for every integer $y$, its value has a non-zero remainder modulo $q$. 

\noindent \textbf{\underline{Claim 2}:} $\nu_q(f(b)) = \nu_q(\mathcal{N}_f(b)) = 1$ for every $b$ divisible by $q$. \\
Substituting $b=qy$ into the results of Claim 1 gives $f(qy)/q \not\equiv 0 \pmod q$ and $\mathcal{N}_f(qy)/q \not\equiv 0 \pmod q$. Thus, multiplying by $q$ shows that both $f(b)$ and $\mathcal{N}_f(b)$ are divisible by $q$ but not by $q^2$. Therefore, their $q$-adic valuations are exactly $1$. \\[2mm]
\noindent \textbf{\underline{Claim 3}:} $\cF_b(x) := f(h_b(x))$ is irreducible for every $b$ divisible by $q$.\\
Since $q\mid a$ and $q\mid b$, the inner trinomial satisfies $h_b(x)\equiv x^m\pmod q$. Also, since $f(x)$ is $q$-Eisenstein, we have $f(x)\equiv x^n\pmod q$. Therefore, we have
$\cF_b(x)\equiv (x^m)^n\equiv x^{mn} \pmod{q}$. Moreover, the constant term $\cF_b(0)=f(b)$ satisfies $\nu_q(f(b))=1$ by Claim 2. Hence, $\cF_b(x)$ is also $q$-Eisenstein and is therefore irreducible over $\mathbb{Q}$. \\[2mm]
\noindent \textbf{\underline{Claim 4}:} $R_f$ has no repeated roots. \\
Let $\theta_1,\dots,\theta_n$ be the distinct roots of the irreducible polynomial $f$. Thus, the roots of $f(qy)/q$ are $\theta_i/q$. Since $\mathcal{N}_f(b)=m^{mn}f(b+\epsilon/m^m)$, the roots of $\mathcal{N}_f(qy)/q$ are $(\theta_j-\epsilon/m^m)/q$. Hence, both $f(qy)/q$ and $\mathcal{N}_f(qy)/q$ have no repeated roots. Therefore, $R_f$ has a repeated factor if and only if $f(qy)/q$ and $\mathcal{N}_f(qy)/q$ have a common root. Suppose that they have a common root, say $\theta_i/q=(\theta_j-\epsilon/m^m)/q$, i.e., $\theta_j-\theta_i=\epsilon/m^m$. The left-hand side is an algebraic integer, and hence the rational number $\epsilon/m^m$ must be an integer. However, $\gcd(a,m)=1$ and $\gcd(m-1,m)=1$ giving $\gcd(\epsilon,m)=1$. Since $m\neq \pm 1$, we have $\epsilon/m^m\notin\mathbb{Z}$, which is a contradiction. Thus, $R_f$ is separable. \\[2mm]
We now prove that $\gcd\{R_f(y):y\in\mathbb{Z}\}$ is squarefree. Let $p$ be a prime dividing this greatest common divisor. By Claim 1, $R_f(y)\not\equiv 0\pmod q$, and hence $p\neq q$. If $p>2n$, then $\mathbb{F}_p$ contains more elements than the degree of $R_f(y)$. Since $R_f(y)$ is zero modulo $p$ for every $y\in\mathbb{Z}$, it must be the zero polynomial modulo $p$. Thus, $p$ divides the leading coefficient $q^{2n-2}m^{mn}$. Since $p\neq q$, this implies $p\mid m$. Suppose now that $p\mid m$. Reducing $\mathcal{N}_f(qy)/q$ modulo $p$, all terms containing $m$ vanish, and hence $\mathcal{N}_f(qy)/q\equiv\epsilon^n/q\pmod p$. Since $\gcd(\epsilon,m)=1$, we have $p\nmid\epsilon$. Therefore, $\mathcal{N}_f(qy)/q$ does not vanish modulo $p$. This implies $R_f(y)$ is divisible by $p$ only if $p$ divides $f(qy)/q$ for all $y$, i.e., $f(qy)/q$ is identically zero modulo $p$ as $p > 2n > \deg(f)$. But the leading coefficient of $f(qy)/q$ is $q^{n-1}$ which is clearly not divisible by $p$,  giving a contradiction. Thus, $p\nmid m$. Consequently, any fixed prime divisor of $\gcd$ of $R_f(y)$ must satisfy $p\le 2n$, $p\neq q$, and $p\nmid m$. By our hypothesis, $p^2 \nmid R_f(0)$ for every prime $p\le 2n$ with $p\neq q$. Hence, no prime can divide all the values of $R_f(y)$, and therefore $\gcd\{R_f(y):y\in\mathbb{Z}\}$ is squarefree.

Since $R_f$ has no repeated roots and $\gcd\{R_f(y):y\in\mathbb{Z}\}$ is squarefree, we apply Theorem \ref{thm:4.1} with $D'=1$ and $\mathfrak{q}_p=0$. Assuming the $abc$-conjecture, the number of integers $y\le Z$ for which $R_f(y)$ is squarefree is asymptotic to $\lambda Z$, where $\lambda=\prod_p(1-\omega(p)/p^2)$. The positivity of $\lambda$ follows since, for primes not dividing the discriminant, the roots modulo $p$ lift uniquely to roots modulo $p^2$ by Hensel's lemma. Thus, for all sufficiently large $p$, we have $\omega(p)\le 2n<p^2$, and hence $\lambda>0$.

Choose an integer $y$ such that $R_f(y)$ is squarefree, and put $b=qy$. We now show that $f(h_b(x))$ is monogenic. Suppose that an odd prime $p\neq q$ satisfies $p^2\mid f(b)$. Then $p^2\mid f(qy)/q$, and hence $p^2\mid R_f(y)$, which is a contradiction. Together with $\nu_q(f(b))=1$ from Claim 2, this shows that $f(b)$ is squarefree. A similar argument shows that $\mathcal{N}_f(b)$ is squarefree. We now check the conditions of Corollary \ref{T2.1} for $\cF_b(x)=f(h_b(x))$ with $t=1$. The polynomial $\cF_b$ is irreducible by Claim 3. The conditions concerning prime divisors of $\gcd(a,m)$ are vacuous since $\gcd(a,m)=1$. Moreover, $f$ is monogenic, and $f(b)$, $\mathcal{N}_f(b)$ are squarefree, i.e., Conditions 1, 2 and 3 are satisfied. Therefore, $\cF_b$ is monogenic by Corollary \ref{T2.1}. Finally, setting $Y=qZ$ gives the required density lower bound. This completes teh proof.
\end{proof}
\section{Examples} \label{sec:examples}
The following lemma records a useful consequence of the divisibility $q^2\mid\Phi_p(2j)$, which will be used in the proof of the Proposition~\ref{pro:6.2}.

\begin{lemma}
Let $p$ be an odd prime and $j\geq 1$ be an integer. Suppose that $q^2$ divides $\Phi_p(2j)$ for some prime $q$. Then both of the following conditions hold:
\begin{enumerate}
\item $q$ is a generalized Wieferich prime to base $2j$, that is,
$(2j)^{q-1}\equiv 1\pmod{q^2}$;
\item $p$ divides $q-1$.
\end{enumerate}
\end{lemma}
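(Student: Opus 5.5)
The plan is to use the standard structure of prime divisors of cyclotomic values. Write $x=2j$, so $\Phi_p(x)=\frac{x^p-1}{x-1}$ with $p$ an odd prime. The first step is to show that $q\nmid x$ and to pin down the multiplicative order of $x$ modulo $q$. Since $\Phi_p(x)\equiv 1\pmod{x}$, no prime dividing $x$ divides $\Phi_p(x)$. Hence $q\nmid 2j$, and in particular $q$ is odd.

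Next, from $q\mid \Phi_p(x)\mid x^p-1$, the order $d$ of $x$ modulo $q$ divides $p$, so $d\in\{1,p\}$. We split into two cases.

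\begin{itemize}
\item If $d=1$, then $x\equiv 1\pmod q$. In that case $\Phi_p(x)=1+x+\cdots+x^{p-1}\equiv p\pmod q$, which forces $q=p$.
\item If $d=p$, then Fermat's little theorem gives $p\mid q-1$, which is conclusion (2).
\end{itemize}

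The main obstacle is ruling out $q=p$ under the hypothesis $q^2\mid\Phi_p(x)$. For this I would use the standard lifting-the-exponent computation. Write $x=1+py$. Expanding binomially gives
\[
\Phi_p(x)=\sum_{i=0}^{p-1}(1+py)^i\equiv p+py\binom{p}{2}\pmod{p^2}.
\]
Since $p$ is odd, $p\mid\binom{p}{2}$, so $\Phi_p(x)\equiv p\pmod{p^2}$. Thus $v_p(\Phi_p(x))=1$, which contradicts $p^2\mid\Phi_p(x)$. So the case $d=1$ cannot occur, and (2) holds.

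For (1), the key input is that $q^2\mid \Phi_p(x)\mid x^p-1$ gives $x^p\equiv1\pmod{q^2}$. Therefore the order of $x$ in $(\mathbb{Z}/q^2\mathbb{Z})^\times$ divides $p$. Since $p\mid q-1$, it follows that $p$ divides $q-1$ as well. Hence $x^{q-1}\equiv1\pmod{q^2}$, that is, $(2j)^{q-1}\equiv 1\pmod{q^2}$. This completes the plan.

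Only the case analysis needs care; everything else is routine.
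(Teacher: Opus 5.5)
Your proof is correct and follows essentially the paper's route: an order argument for $2j$, combined with the expansion $\Phi_p(1+kp)\equiv p \pmod{p^2}$ to exclude $q=p$, and then raising $(2j)^p\equiv 1 \pmod{q^2}$ to the power $(q-1)/p$. The only difference is that you take the order of $2j$ modulo $q$, so Fermat gives $p\mid q-1$ directly and $q=p$ is the only exceptional case; the paper works modulo $q^2$, where it must separately rule out order $1$ and then invoke $|(\mathbb{Z}/q^2\mathbb{Z})^\times|=q(q-1)$. One wording slip: in your last paragraph you mean that the order of $2j$ modulo $q^2$ divides $p$ and hence divides $q-1$, not that ``$p$ divides $q-1$ as well.''
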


\begin{proof}
Assume that $q^2\mid\Phi_p(2j)$ for some prime $q$. Since $\Phi_p(x)=\frac{x^p-1}{x-1}$, we have $(2j)^p-1=(2j-1)\Phi_p(2j)$. Thus, $q^2\mid\Phi_p(2j)$ implies
\begin{equation}
(2j)^p\equiv 1\pmod{q^2}.
\end{equation}
Therefore, $2j$ is invertible modulo $q^2$, and its multiplicative order modulo $q^2$ divides $p$. Since $p$ is prime, the order of $2j$ modulo $q^2$ is either $1$ or $p$. Suppose to the contrary, that the order is $1$. Then $2j\equiv1\pmod{q^2}$. Hence,
\begin{equation}
\Phi_p(2j)=(2j)^{p-1}+(2j)^{p-2}+\cdots+1\equiv p\pmod{q^2}.
\end{equation}
On the other hand, $q^2\mid\Phi_p(2j)$ gives $\Phi_p(2j)\equiv0\pmod{q^2}$. Therefore, $p\equiv0\pmod{q^2}$, which is not possible since $p$ is prime. Thus, the order of $2j$ modulo $q^2$ must be $p$. We now show that $p$ divides $q-1$. By Lagrange's theorem, the order of an element of $(\mathbb{Z}/q^2\mathbb{Z})^\times$ divides the order of the group. Since $|(\mathbb{Z}/q^2\mathbb{Z})^\times|=\phi(q^2)=q(q-1)$ and the order of $2j$ is $p$, we have $p\mid q(q-1).$ As $p$ is prime, it follows that either $p=q$ or $p\mid(q-1)$.

We now show that $p\neq q$. Suppose that $p=q$. Since the order of $2j$ modulo $q^2$ is $q$, we have $(2j)^q\equiv1\pmod{q^2}$, and hence $2j\equiv1\pmod q$. Write $2j=1+kq$ for some integer $k$. Then

$$
\Phi_q(1+kq)=\sum_{r=0}^{q-1}(1+kq)^r\equiv q+kq^2\frac{q-1}{2}\equiv q\pmod{q^2}.
$$

But $q^2\mid\Phi_q(2j)$, so $\Phi_q(2j)\equiv0\pmod{q^2}$, giving $q\equiv0\pmod{q^2}$, a contradiction. Thus $p\neq q$, and consequently $p\mid(q-1)$. Since $p\mid(q-1)$, there exists an integer $k$ such that $q-1=kp$. From the initial divisibility assumption, we have $(2j)^p\equiv1\pmod{q^2}$. Raising both sides to the $k$-th power gives
\begin{equation}
(2j)^{kp}\equiv1\pmod{q^2}.
\end{equation}
Since $kp=q-1$, it follows that
\begin{equation}
(2j)^{q-1}\equiv1\pmod{q^2}.
\end{equation}
Thus, $q$ is a generalized Wieferich prime to base $2j$, proving condition (1).
\end{proof}
\begin{lemma}\label{lem:6.2}
Let $p$ be an odd prime and  $\Phi_p(x)$ denote the $p$-th cyclotomic polynomial. Then
\[
\Phi_p(x)\mid\frac{\Phi_p(x^2)-\Phi_p(x)^2}{2}
\]
in $\mathbb{Z}[x]$.
\end{lemma}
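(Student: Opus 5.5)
We need $\Phi_p(x)\mid \frac{\Phi_p(x^2)-\Phi_p(x)^2}{2}$ in $\mathbb{Z}[x]$ for an odd prime $p$. The plan is to establish two separate facts. First, $\Phi_p(x^2)-\Phi_p(x)^2$ is divisible by $2$ in $\mathbb{Z}[x]$. Second, $\Phi_p(x)$ divides $\Phi_p(x^2)-\Phi_p(x)^2$ in $\mathbb{Z}[x]$. Since $\Phi_p$ is monic, hence primitive, Gauss's lemma combines them. If $\Phi_p(x)^2-\Phi_p(x^2) = \Phi_p(x)Q(x)$ with $Q\in\mathbb{Z}[x]$, then $2$ divides $\Phi_p(x)Q(x)$, so $2\mid Q$ because $\Phi_p$ is primitive. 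Hence $\frac{\Phi_p(x^2)-\Phi_p(x)^2}{2}=\Phi_p(x)\cdot\bigl(-Q(x)/2\bigr)$ with integral cofactor.

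For the first fact I use the Frobenius congruence $g(x)^2\equiv g(x^2)\pmod 2$, valid for every $g\in\mathbb{Z}[x]$. Explicitly, with $g(x)=\sum_{i=0}^{p-1}x^i$ we have $g(x)^2=\sum_i x^{2i}+2\sum_{i<j}x^{i+j}$. Therefore $\Phi_p(x)^2-\Phi_p(x^2)=2\sum_{0\le i<j\le p-1}x^{i+j}$, so the quotient by $2$ is an explicit integer polynomial.

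For the second fact it suffices to show $\Phi_p(x)\mid\Phi_p(x^2)$, since $\Phi_p(x)$ obviously divides $\Phi_p(x)^2$. Write $\Phi_p(x^2)=\frac{x^{2p}-1}{x^2-1}=\frac{x^p-1}{x-1}\cdot\frac{x^p+1}{x+1}=\Phi_p(x)\Phi_p(-x)$. Here I use that $p$ is odd, so $x+1\mid x^p+1$ and $\frac{x^p+1}{x+1}=\Phi_p(-x)$ lies in $\mathbb{Z}[x]$. Alternatively, the roots of $\Phi_p(x)$ are primitive $p$-th roots of unity $\zeta$. Since $p$ is odd, $\zeta^2$ is again a primitive $p$-th root of unity, so $\Phi_p(\zeta^2)=0$. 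As $\Phi_p$ is monic, division in $\mathbb{Z}[x]$ then gives an integral quotient. Combining gives
\[\frac{\Phi_p(x^2)-\Phi_p(x)^2}{2}=\Phi_p(x)\cdot\frac{\Phi_p(-x)-\Phi_p(x)}{2},\]
and the cofactor is integral because $\Phi_p(-x)\equiv\Phi_p(x)\pmod 2$.

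The only delicate point is ensuring integrality after dividing by $2$, which this final identity settles directly. I expect no real obstacle.
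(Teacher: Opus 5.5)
Your proof is correct and matches the paper's: both rest on $\Phi_p(x^2)=\Phi_p(x)\frac{x^p+1}{x+1}$ for odd $p$, and your cofactor $\frac{\Phi_p(-x)-\Phi_p(x)}{2}=-(x+x^3+\cdots+x^{p-2})$ is exactly the paper's. The Gauss-lemma and Frobenius-congruence preliminaries are valid but redundant once you have that explicit identity.
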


\begin{proof}
Since $p$ is odd, $\Phi_p(x)=1+x+\cdots+x^{p-1}$. Moreover,
$\Phi_p(x^2)=\frac{x^{2p}-1}{x^2-1}
=\Phi_p(x)\frac{x^p+1}{x+1}$.
Thus
\[
\Phi_p(x^2)-\Phi_p(x)^2
=\Phi_p(x)\left(\frac{x^p+1}{x+1}-\Phi_p(x)\right).
\]
Since $p$ is odd,
$\frac{x^p+1}{x+1}=x^{p-1}-x^{p-2}+\cdots-x+1$, and hence
$\frac{x^p+1}{x+1}-\Phi_p(x)
=-2(x^{p-2}+x^{p-4}+\cdots+x)$.
Therefore
\[
\frac{\Phi_p(x^2)-\Phi_p(x)^2}{2}
=-\Phi_p(x)(x^{p-2}+x^{p-4}+\cdots+x)\in\mathbb{Z}[x].
\]
Hence $\Phi_p(x)\mid\frac{\Phi_p(x^2)-\Phi_p(x)^2}{2}$. This completes the proof of lemma.
\end{proof}
\begin{proposition}\label{pro:6.2}
Let $p$ be an odd prime and $j\geq1$ an integer. Let $\Phi_p(x)$ denote the $p$-th cyclotomic polynomial, which is  monogenic. Define the Misiurewicz trinomial $$h(x)=x^2-4jx+(4j^2+2j).$$ If $\Phi_p(2j)$ is squarefree and $j$ is odd. Then the infinite dynamical tower defined by $\mathfrak{F}_k(x)=\Phi_p(h^k(x))$ is monogenic for every $k\geq1$.
\end{proposition}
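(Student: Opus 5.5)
The plan is to reduce the statement to the binomial case of Corollary~\ref{T2.1} (the corollary with $a=0$, $t=1$). This works because $h$ is conjugate to $x^2$ by a translation.

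\textbf{Step 1: conjugate $h$ to $x^2$.} Put $T(x)=x+2j$. Then $h(x)=(x-2j)^2+2j$, so $h(T(x))=x^2+2j=T(x^2)$. By induction on $k$ we get $h^k(T(x))=T(x^{2^k})$, and hence
\[
\mathfrak{F}_k(x+2j)=\Phi_p\bigl(x^{2^k}+2j\bigr).
\]
A root of $\Phi_p(x^{2^k}+2j)$ differs from a root of $\mathfrak{F}_k$ by the translation $2j$. So by Proposition~\ref{pro3.7}, applied with $k=1$ and all $v=v_q$, the two polynomials have the same index. It therefore suffices to show that $\Phi_p(x^{2^k}+2j)$ is irreducible and monogenic.

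\textbf{Step 2: irreducibility.} I expect this to be the main point needing care, since the criterion assumes irreducibility. The value $\Phi_p(2j)$ is odd and squarefree, and it is larger than $1$. Choose a prime $q\mid\Phi_p(2j)$; then $q\,\|\,\Phi_p(2j)$. We have $q\neq p$: indeed $\Phi_p(2j)\equiv p \pmod{p^2}$ fails unless $2j\equiv1\pmod p$, and that case is excluded as in the lemma preceding Lemma~\ref{lem:6.2}. Hence $q$ is unramified in $\mathbb{Q}(\zeta_p)$.

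Since $N(2j-\zeta_p)=\Phi_p(2j)$ and $v_q(\Phi_p(2j))=1$, there is a prime $\mathfrak{Q}\mid q$ of $\mathbb{Q}(\zeta_p)$ with $v_{\mathfrak{Q}}(\zeta_p-2j)=1$ for some primitive root $\zeta_p$. Then $x^{2^k}-(\zeta_p-2j)$ is $\mathfrak{Q}$-Eisenstein, so it is irreducible over $\mathbb{Q}(\zeta_p)$. Consequently a root $\theta$ satisfies
\[
[\mathbb{Q}(\theta):\mathbb{Q}]=2^k(p-1)=\deg \Phi_p(x^{2^k}+2j),
\]
which gives irreducibility.

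\textbf{Step 3: apply the binomial corollary.} Take $f=\Phi_p$, $m=2^k$, $b=2j$ in Corollary~\ref{T2.1}. Monogenicity then holds if and only if the following three conditions hold:
\begin{enumerate}
\item $\Phi_p$ is monogenic, which is classical;
\item $\Phi_p(2j)$ is squarefree, which is assumed;
\item $2\nmid \operatorname{ind}\Phi_p(x^2+2j)$, since $2$ is the only prime dividing $m$.
\end{enumerate}

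\textbf{Step 4: verify condition (3) by Dedekind's criterion at $2$.} Modulo $2$ we have $\Phi_p(x^2+2j)\equiv\Phi_p(x)^2$. Moreover $\Phi_p(x)$ is separable over $\mathbb{F}_2$ because $p$ is odd, so every irreducible factor appears with exponent exactly $2$. By Taylor expansion,
\[
\Phi_p(x^2+2j)=\Phi_p(x^2)+2j\,\Phi_p'(x^2)+4(\cdots).
\]
Hence
\[
\frac{\Phi_p(x^2+2j)-\Phi_p(x)^2}{2}\equiv \frac{\Phi_p(x^2)-\Phi_p(x)^2}{2}+j\,\Phi_p'(x^2)\pmod 2.
\]
By Lemma~\ref{lem:6.2}, the first summand is divisible by $\Phi_p(x)$. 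Since $j$ is odd, it remains to check that $\Phi_p'(x^2)\equiv\Phi_p'(x)^2 \pmod 2$ is coprime to $\Phi_p(x)$ modulo $2$. This holds exactly because $\Phi_p$ is separable over $\mathbb{F}_2$. Therefore no irreducible factor of $\overline{\Phi_p}$ divides the Dedekind remainder, and $2\nmid\operatorname{ind}\Phi_p(x^2+2j)$.

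\textbf{Conclusion.} Steps 1–4 together show that $\mathfrak{F}_k$ is monogenic for every $k\ge1$.
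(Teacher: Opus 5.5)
Your proof is correct, and it follows a genuinely different route from the paper's.

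\textbf{A false side remark in Step~2.} You cannot claim $q\neq p$ for an arbitrary prime $q\mid\Phi_p(2j)$. The lemma preceding Lemma~\ref{lem:6.2} does not rule out $2j\equiv 1\pmod p$. For example, $p=3$, $j=5$ gives $\Phi_3(10)=111=3\cdot 37$, which is squarefree with $j$ odd. Nothing depends on this remark, so you can simply delete it. Since $N(2j-\zeta_p)=\Phi_p(2j)$, you have $1=v_q(\Phi_p(2j))=\sum_{\mathfrak{Q}\mid q} f_{\mathfrak{Q}}\,v_{\mathfrak{Q}}(2j-\zeta_p)$. This produces a prime $\mathfrak{Q}$ with $v_{\mathfrak{Q}}(2j-\zeta_p)=1$ whether or not $q$ ramifies, so the Eisenstein argument stands. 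The degree count then follows from $\zeta_p=\theta^{2^k}+2j\in\mathbb{Q}(\theta)$.

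\textbf{Comparison with the paper.} The paper never conjugates. It applies Corollary~\ref{T2.1} directly to $f=\Phi_p$ and the trinomial $h$, with $a=-4j$ and $b=4j^2+2j$, and checks three conditions:
\begin{itemize}
\item Condition (2) at $b=4j^2+2j$, by a case analysis. If $q\mid a$ then $q\mid b$, so $\Phi_p(b)\equiv 1\pmod q$. If $q$ divides both $\Phi_p(b)$ and $\Phi_p'(b)$, then $q=p$, and $\Phi_p(b)\equiv p\pmod{p^2}$.
\item Condition (3), via the fixed critical point $h(2j)=2j$.
\item Condition (4), by the same $2$-adic Dedekind computation you give; there $b/2\equiv j\pmod 2$.
\end{itemize}

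Your translation $h(x+2j)=x^2+2j$ buys three things.
\begin{itemize}
\item The point $x=0$ stops being special, so the condition at $4j^2+2j$ never arises.
\item All levels are handled at once. $\mathfrak{F}_k$ is a translate of $\Phi_p(x^{2^k}+2j)$, and applying the binomial corollary with $m=2^k$ directly, rather than iterating, only ever asks about $\Phi_p(x^2+2j)$ at $2$. This sidesteps the multiplicity hypothesis of Theorem~\ref{thm:2.6} and Corollary~\ref{cor:2.7}, which fails here because $\Phi_p$ is separable mod $2$.
\item You prove irreducibility, which the criterion presupposes and the paper never checks.
\end{itemize}

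This matters. The paper's verification really only establishes the case $\mathfrak{F}_1$. For $k\ge 2$ it only observes that the critical orbit is fixed. That observation does not settle conditions (2) and (4) for $f=\mathfrak{F}_{k-1}$, and Theorem~\ref{thm:2.6} is unavailable. Your argument supplies the whole tower. In exchange, the paper's route illustrates how the trinomial criterion is meant to be used, including critical values and the special $m=2$ condition.
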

\begin{proof}
By classical algebraic  number theory, the ring of integers of the cyclotomic field $\mathbb{Q}(\zeta_p)$ is  $\mathbb{Z}[\zeta_p]$. Hence, the base polynomial $f(x) = \Phi_p(x)$ is monogenic.

Now we verify condition~(ii) of Corollary~\ref{T2.1}. Since $m=2$ in our case, it suffices to show that $q^2\nmid\phi_p(b)$ whenever $q\mid a\phi_p'(b)$. There are two cases,  either $q\mid a(=2j)$ or $q\mid\phi_p'(b)$. First, suppose that $q\mid2j$. Since $b=2j(2j+1)$, we have $q\mid b$. Therefore, $\phi_p(b)=b^{p-1}+b^{p-2}+\cdots+b+1\equiv1\pmod q$. Hence $q\nmid\phi_p(b)$, and consequently $q^2\nmid\phi_p(b)$.

It remains to consider the case $q\mid\phi_p'(b)$. If $q\nmid\phi_p(b)$, then there is nothing to prove. Thus, suppose that $q\mid\phi_p(b)$ and $q\mid\phi_p'(b)$. From the identity $(x-1)\phi_p(x)=x^p-1,$ we obtain, upon differentiating, $\phi_p(b)+(b-1)\phi_p'(b)=pb^{p-1}$. It follows that $q\mid pb^{p-1}$. Since we have already proved that $q\nmid b$, we must have $q=p$.

We now show that $p^2\nmid\phi_p(b)$. First, we claim that $p\mid\phi_p(b)$ implies $b\equiv1\pmod p$. Indeed, from $(b-1)\phi_p(b)=b^p-1$ and Fermat's little theorem, $b^p\equiv b\pmod p$, we obtain $(b-1)\phi_p(b)\equiv b-1\pmod p$, or equivalently, $(b-1)(\phi_p(b)-1)\equiv0\pmod p$. Since $p\mid\phi_p(b)$, we have $p\nmid(\phi_p(b)-1)$. Hence, $b\equiv1\pmod p$. Writing $b=1+kp$, we obtain
\[
\phi_p(b)=\frac{(1+kp)^p-1}{kp}
=p+\binom{p}{2}kp+\binom{p}{3}k^2p^2+\cdots+k^{p-1}p^{p-1}.
\]
Since $p\mid\binom{p}{r}$ for $1\le r\le p-1$, all terms after the first are divisible by $p^2$. Therefore, $\phi_p(b)\equiv p\pmod{p^2}$, and hence $p^2\nmid\phi_p(b)$. Since $q=p$, this proves that $q^2\nmid\phi_p(b)$. Thus condition~(ii) of Corollary~\ref{T2.1} is satisfied.

The critical point of $h(x)$ is $x=2j$. Evaluating $h$ at this point gives $h(2j)=2j.$ Thus, $2j$ is a fixed point of $h$, and hence $h^k(2j)=2j$ for every $k\geq1$. Therefore, along the critical orbit, we have $\Phi_p(h^k(2j))=\Phi_p(2j).$ By hypothesis, $\Phi_p(2j)$ is squarefree. Condition~(iii) of Corollary~\ref{thm:2.1} is satisfied.

Since $\gcd (a, m)=2$, so for condition $(iv)$, we need to show that $2$ does not divide index of $\phi_p(x^2+b)$. Using $2\mid b$, we have
\begin{align*}
    \phi_p(x^2+b)=& \sum_{i=1}^{p-1}(x^2+b)^i+1\\
    =& \sum_{i=1}^{p-1}(x^{2i}+ix^{2(i-1)}b)+1+4A(x)\\
    =& \phi_p(x^2)+b\phi'_p(x^2)+4B(x)
\end{align*}
for some $A(x), B(x)\in\Z[x]$. Let $\phi_p(x)\equiv\prod g_i(x)\pmod 2$. Write $\phi_p(x)=\prod g_i(x)+2H(x)$ for some $H(x)\in\Z[x]$. Since $2\mid b$, we have $\phi_p(x^2+b) \equiv (\prod g_i(x))^2\pmod 2$.  Using this and previous equation, we have
\[\frac{\phi_p(x^2+b)-(\prod g_i(x))^2}{2}=\frac{\phi_p(x^2)-\phi_p(x)^2}{2}+\frac{b}{2}\phi_p'(x^2)+2C(x)\]
for some $C(x)\in\Z[x]$. Using Lemma~\ref{lem:6.2} and above equation, we obtain
\begin{equation}\label{eqn:6.5}
\frac{\phi_p(x^2+b)-\left(\prod_i g_i(x)\right)^2}{2}
\equiv j\phi_p'(x^2)
\pmod{\langle 2,\phi_p(x)\rangle}.
\end{equation}
Let $\alpha$ be a root of $\Phi_p(x)$ in $\overline{\mathbb F}_2$. Since
$p$ is odd, $\alpha$ is a primitive $p$-th root of unity. As $\gcd(2,p)=1$, $\alpha^2$ is also a primitive $p$-th root of unity,
and hence $\Phi_p(\alpha^2)=0$. Moreover, $x^p-1$ is separable over $\mathbb F_2$, since
$(x^p-1)'=px^{p-1}=x^{p-1}\neq0$. Thus $\Phi_p(x)$ is separable. Therefore $\Phi_p'(\alpha^2)\neq0$. Hence $\alpha$ is not a root of $\Phi_p'(x^2)$, and consequently
\[
\gcd\bigl(\Phi_p(x),\Phi_p'(x^2)\bigr)=1
\quad\text{in }\mathbb F_2[x].
\]
By Dedekind theorem and \eqref{eqn:6.5} we get desired result whenever $j$ is odd.
\end{proof}

\bibliographystyle{amsplain}
\bibliography{references}  

\end{document}